\documentclass{article}
%%%%%%%%%%%%%%%%%%%%%%%%%%%%%%%%%%%%%%%%%%%%%%%%%%%%%%%%%%%%%%%%%%%%%%%%%%%%%%%%%%%%%%%%%%%%%%%%%%%%%%%%%%%%%%%%%%%%%%%%%%%%%%%%%%%%%%%%%%%%%%%%%%%%%%%%%%%%%%%%%%%%%%%%%%%%%%%%%%%%%%%%%%%%%%%%%%%%%%%%%%%%%%%%%%%%%%%%%%%%%%%%%%%%%%%%%%%%%%%%%%%%%%%%%%%%
\usepackage{amssymb}
\usepackage{amsmath}
\usepackage{amsfonts}

\setcounter{MaxMatrixCols}{10}
%TCIDATA{OutputFilter=LATEX.DLL}
%TCIDATA{Version=5.50.0.2953}
%TCIDATA{<META NAME="SaveForMode" CONTENT="1">}
%TCIDATA{BibliographyScheme=Manual}
%TCIDATA{Created=Wednesday, July 14, 2010 18:18:59}
%TCIDATA{LastRevised=Saturday, March 31, 2012 17:03:23}
%TCIDATA{<META NAME="GraphicsSave" CONTENT="32">}
%TCIDATA{<META NAME="DocumentShell" CONTENT="Standard LaTeX\Blank - Standard LaTeX Article">}
%TCIDATA{Language=American English}
%TCIDATA{CSTFile=40 LaTeX article.cst}

\newtheorem{theorem}{Theorem}

\newtheorem{definition}[theorem]{Definition}
\newtheorem{example}[theorem]{Example}

\newtheorem{lemma}[theorem]{Lemma}
\newtheorem{notation}[theorem]{Notation}
\newtheorem{problem}[theorem]{Problem}
\newtheorem{proposition}[theorem]{Proposition}
\newtheorem{remark}[theorem]{Remark}

\newenvironment{proof}[1][Proof]{\noindent\textbf{#1.} }{\ \rule{0.5em}{0.5em}}
\input{tcilatex}
\begin{document}

\title{Notes on area operator, geometric $2$-rough paths and Young integral
when $p^{-1}+q^{-1}=1$}
\author{Danyu Yang\thanks{%
University of Oxford and Oxford-man Institute, Email: yangd@maths.ox.ac.uk}}
\maketitle

\begin{abstract}
When the norm on continuous bounded variation paths weakened to $2$%
-variation, the area operator is not continuous nor bounded, but is closable
in 2-rough norm, and paths in the closure (i.e. paths which admits an
enhancement into a geometric $2$-rough path) is not linear.

For path $\gamma $ with vanishing $2$-variation, the Riemann-Stieltjes
integral $2^{-1}\tiint_{s<u_{1}<u_{2}<t}\left[ d\gamma \left( u_{1}\right)
,d\gamma \left( u_{2}\right) \right] $ is the only possible candidate to
enhance $\gamma $ into a geometric $2$-rough path, but the integral may not
exist, so not every path with vanishing $2$-variation admits an enhancement.

Young integral is extended to the case $p^{-1}+q^{-1}=1$ by assuming a finer
scale continuity. As a consequence, when $p=q=2$, by adding a $\log $ term
(and $\log \log $ term, etc.) in the modulus of continuity, there exists a
sequence of nested spaces of enhancible paths.
\end{abstract}

\section{Definitions and notations}

\noindent Firstly, we define $p$-variation seminorm on the space of
continuous paths, which is important in rough path theory (see \cite{T. J.
Lyons notes}, \cite{T. J. Lyons and Z. Qian} and \cite{P. Friz}).

\begin{definition}
A finite set of points $D=\left\{ t_{j}\right\} _{j=0}^{n}$ is said to be a
finite partition of interval $\left[ 0,T\right] $, if $0=t_{0}<t_{1}<\dots
<t_{n}=T$. \ 
\end{definition}

\begin{notation}
Suppose $D=\left\{ t_{j}\right\} _{j=0}^{n}$ is a finite partition of $\left[
0,T\right] $. Denote $\left\vert D\right\vert :=\max_{0\leq j\leq
n-1}\left\{ \left\vert t_{j+1}-t_{j}\right\vert \right\} $ as the mesh of $D$%
.
\end{notation}

\begin{notation}
Denote $\mathcal{V}$ as a Banach space with norm $\left\Vert \cdot
\right\Vert $.
\end{notation}

\begin{notation}
For $T>0$, denote $C\left( \left[ 0,T\right] ,\mathcal{V}\right) :=\left\{
\gamma |\gamma :\left[ 0,T\right] \rightarrow \mathcal{V}\text{ is continuous%
}\right\} $; denote $\bigtriangleup _{\left[ 0,T\right] }:=\left\{ \left(
s,t\right) |0\leq s\leq t\leq T\right\} $ and%
\begin{equation*}
C\left( \bigtriangleup _{\left[ 0,T\right] },\mathcal{V}\right) :=\left\{
\alpha |\alpha :\bigtriangleup _{\left[ 0,T\right] }\rightarrow \mathcal{V}%
\text{ is continuous, }\alpha \left( t,t\right) =0,\forall t\in \left[ 0,T%
\right] \right\} .
\end{equation*}
\end{notation}

\begin{definition}
Suppose $\alpha \in C\left( \bigtriangleup _{\left[ 0,T\right] },\mathcal{V}%
\right) $. For $p>0$, define the $p$-variation of $\alpha $ on $\left[ 0,T%
\right] $ as%
\begin{equation}
\left\Vert \alpha \right\Vert _{p-var,\left[ 0,T\right] }:=\left(
\sup_{D\subset \left[ 0,T\right] }\sum_{j,t_{j}\in D}\left\Vert \alpha
\left( t_{j},t_{j+1}\right) \right\Vert ^{p}\right) ^{\frac{1}{p}},
\label{Definition p-variation}
\end{equation}%
where the supremum is over all finite partitions of $\left[ 0,T\right] $.

\noindent When $p=\infty $, define $\left\Vert \alpha \right\Vert _{\infty
-var,\left[ 0,T\right] }:=\sup_{0\leq s<t\leq T}\left\Vert \alpha \left(
s,t\right) \right\Vert $.
\end{definition}

For any fixed $\alpha \in C\left( \bigtriangleup _{\left[ 0,T\right] },%
\mathcal{V}\right) $, the function $p\mapsto \left\Vert \alpha \right\Vert
_{p-var,\left[ 0,T\right] }$ on $p\in (0,\infty ]$ is non-increasing and
continuous where it is finite.

\begin{definition}
Suppose $\alpha \in C\left( \bigtriangleup _{\left[ 0,T\right] },\mathcal{V}%
\right) $. Then $\alpha $ is said to be of vanishing $p$-variation for some $%
p>0$, if%
\begin{equation}
\lim_{\delta \rightarrow 0}\omega _{p}\left( \alpha ,\delta \right)
:=\lim_{\delta \rightarrow 0}\left( \sup_{D\subset \left[ 0,T\right]
,\left\vert D\right\vert \leq \delta }\sum_{j,t_{j}\in D}\left\Vert \alpha
\left( t_{j},t_{j+1}\right) \right\Vert ^{p}\right) ^{\frac{1}{p}}=0\text{.}
\label{Definition vanishing p-variation}
\end{equation}
\end{definition}

\begin{definition}
Suppose $\gamma \in C\left( \left[ 0,T\right] ,\mathcal{V}\right) $. Define $%
\widetilde{\gamma }\in C\left( \bigtriangleup _{\left[ 0,T\right] },\mathcal{%
V}\right) $ by setting%
\begin{equation}
\widetilde{\gamma }\left( s,t\right) :=\gamma \left( t\right) -\gamma \left(
s\right) ,\forall 0\leq s\leq t\leq T.
\label{Definition of a function on triangle from a path}
\end{equation}%
Then define $\left\Vert \gamma \right\Vert _{p-var,\left[ 0,T\right]
}:=\left\Vert \widetilde{\gamma }\right\Vert _{p-var,\left[ 0,T\right] }$, $%
\omega _{p}\left( \gamma ,\delta \right) :=\omega _{p}\left( \widetilde{%
\gamma },\delta \right) $ and that $\gamma $ is said to be of vanishing $p$%
-variation if $\lim_{\delta \rightarrow 0}\omega _{p}\left( \gamma ,\delta
\right) =0$.
\end{definition}

Both $p$-variation norm and being of vanishing $p$-variation are invariant
under reparametrisation (i.e. continuous non-decreasing $\varphi :\left[ 0,T%
\right] \rightarrow \overline{%
%TCIMACRO{\U{211d} }%
%BeginExpansion
\mathbb{R}
%EndExpansion
^{+}}$, continuity preserves compactness and being non-decreasing preserves
the order).

\begin{notation}
For $p>0$, denote $C^{p-var}\left( \bigtriangleup _{\left[ 0,T\right] },%
\mathcal{V}\right) $\ and $C^{0,p-var}\left( \bigtriangleup _{\left[ 0,T%
\right] },\mathcal{V}\right) $ as subspaces of $C\left( \bigtriangleup _{%
\left[ 0,T\right] },\mathcal{V}\right) $:%
\begin{eqnarray*}
C^{p-var}\left( \bigtriangleup _{\left[ 0,T\right] },\mathcal{V}\right)
&:&=\left\{ \alpha \in C\left( \bigtriangleup _{\left[ 0,T\right] },\mathcal{%
V}\right) |\left\Vert \alpha \right\Vert _{p-var,\left[ 0,T\right] }<\infty
\right\} , \\
C^{0,p-var}\left( \bigtriangleup _{\left[ 0,T\right] },\mathcal{V}\right)
&:&=\left\{ \alpha \in C\left( \bigtriangleup _{\left[ 0,T\right] },\mathcal{%
V}\right) |\lim_{\delta \rightarrow 0}\omega _{p}\left( \alpha ,\delta
\right) =0\right\} .
\end{eqnarray*}
Similarly, for $p\geq 1$, denote $C^{p-var}\left( \left[ 0,T\right] ,%
\mathcal{V}\right) $\ and $C^{0,p-var}\left( \left[ 0,T\right] ,\mathcal{V}%
\right) $ as subspaces of continuous paths $C\left( \left[ 0,T\right] ,%
\mathcal{V}\right) $:%
\begin{eqnarray*}
C^{p-var}\left( \left[ 0,T\right] ,\mathcal{V}\right) &:&=\left\{ \gamma \in
C\left( \left[ 0,T\right] ,\mathcal{V}\right) |\left\Vert \gamma \right\Vert
_{p-var,\left[ 0,T\right] }<\infty \right\} , \\
C^{0,p-var}\left( \left[ 0,T\right] ,\mathcal{V}\right) &:&=\left\{ \gamma
\in C\left( \left[ 0,T\right] ,\mathcal{V}\right) |\lim_{\delta \rightarrow
0}\omega _{p}\left( \gamma ,\delta \right) =0\right\} .
\end{eqnarray*}
\end{notation}

Then (based on Proposition5.6 \cite{P. Friz}), $C^{0,p-var}\left(
\bigtriangleup _{\left[ 0,T\right] },\mathcal{V}\right) \subset
C^{p-var}\left( \bigtriangleup _{\left[ 0,T\right] },\mathcal{V}\right) $
and $C^{0,p-var}\left( \left[ 0,T\right] ,\mathcal{V}\right) \subset
C^{p-var}\left( \left[ 0,T\right] ,\mathcal{V}\right) $. Moreover, $%
C^{p-var}\left( \left[ 0,T\right] ,\mathcal{V}\right) $ can be treated as a
subspace of $C^{p-var}\left( \bigtriangleup _{\left[ 0,T\right] },\mathcal{V}%
\right) $ in which functions on $\bigtriangleup _{\left[ 0,T\right] }$\ are
generated from paths (by $\left( \ref{Definition of a function on triangle
from a path}\right) $). Similarly, $C^{0,p-var}\left( \left[ 0,T\right] ,%
\mathcal{V}\right) $ can be treated as a subspace of $C^{0,p-var}\left(
\bigtriangleup _{\left[ 0,T\right] },\mathcal{V}\right) $. Therefore, we
have the inclusions of spaces:

\begin{equation*}
\begin{array}{ccc}
C^{0,p-var}\left( \bigtriangleup _{\left[ 0,T\right] },\mathcal{V}\right) & 
\subset & C^{p-var}\left( \bigtriangleup _{\left[ 0,T\right] },\mathcal{V}%
\right) \\ 
\cup &  & \cup \\ 
C^{0,p-var}\left( \left[ 0,T\right] ,\mathcal{V}\right) & \subset & 
C^{p-var}\left( \left[ 0,T\right] ,\mathcal{V}\right)%
\end{array}%
\end{equation*}

For paths in $C^{0,p-var}\left( \left[ 0,T\right] ,\mathcal{V}\right) $ an
explicit characterization is available.

\begin{notation}
Suppose $\gamma :\left[ 0,T\right] \rightarrow \mathcal{V}$ is a continuous
path, and $D=\left\{ t_{j}\right\} _{j}$ a finite partition of $\left[ 0,T%
\right] $. Denote $\gamma ^{D}$ as the piecewise linear path which coincides
with $\gamma $ on points in $D$, i.e.%
\begin{equation}
\gamma ^{D}\left( t\right) =\frac{t-t_{j}}{t_{j+1}-t_{j}}\gamma \left(
t_{j+1}\right) +\frac{t_{j+1}-t}{t_{j+1}-t_{j}}\gamma \left( t_{j}\right) 
\text{, \ }t\in \left[ t_{j},t_{j+1}\right] \text{.}
\label{Definition gammaD}
\end{equation}
\end{notation}

Then when $1<p<\infty $, for $\gamma \in C^{p-var}\left( \left[ 0,T\right] ,%
\mathcal{V}\right) $, the following three statements are equivalent
(Wiener's characterization, Thm5.31 \cite{P. Friz}): 
\begin{eqnarray}
&&\gamma \in C^{0,p-var}\left( \left[ 0,T\right] ,\mathcal{V}\right) 
\label{equivalent relation for vanishing 2-var path} \\
&\Leftrightarrow &\ \exists \left\{ \gamma _{n}\right\} _{n=0}^{\infty }\in
C^{1-var}\left( \left[ 0,T\right] ,\mathcal{V}\right) \text{ s.t.}\
\lim_{n\rightarrow \infty }\left\Vert \gamma _{n}-\gamma \right\Vert _{p-var,%
\left[ 0,T\right] }=0  \notag \\
&\Leftrightarrow &\lim_{\left\vert D\right\vert \rightarrow 0}\left\Vert
\gamma ^{D}-\gamma \right\Vert _{p-var.\left[ 0,T\right] }=0.  \notag
\end{eqnarray}%
(In Thm5.31 \cite{P. Friz}, the equivalency is identified for paths taking
value in $%
%TCIMACRO{\U{211d} }%
%BeginExpansion
\mathbb{R}
%EndExpansion
^{d}$, but can be extended to paths taking value in Banach space $\mathcal{V}
$.) When $p=1$, the latter two are equivalent to the absolutely continuity
of $\gamma $ (Proposition1.32 \cite{P. Friz}), while $\gamma $ is of
vanishing $1$-variation if and only if it is a constant.

\begin{notation}
Denote $\otimes $ as tensor product. Suppose $\left( \mathcal{V},\left\Vert
\cdot \right\Vert _{\mathcal{V}}\right) $ and $\left( \mathcal{U},\left\Vert
\cdot \right\Vert _{\mathcal{U}}\right) $ are two Banach spaces. Denote $%
\left( \mathcal{V}\otimes \mathcal{U},\left\Vert \cdot \right\Vert _{%
\mathcal{V}\otimes \mathcal{U}}\right) $ is the Banach space defined as the
completion of $\left\{ \sum_{i=1}^{n}v_{i}\otimes u_{i},\text{ }v_{i}\in 
\mathcal{V},u_{i}\in \mathcal{U},\text{ }n\geq 1\text{ }\right\} $ w.r.t. $%
\left\Vert \cdot \right\Vert _{\mathcal{V}\otimes \mathcal{U}}$. 
\end{notation}

\begin{notation}
For Banach space  $\left( \mathcal{V},\left\Vert \cdot \right\Vert _{%
\mathcal{V}}\right) $ and $v_{1},v_{2}\in \mathcal{V}$, denote $\left[
v_{1},v_{2}\right] :=v_{1}\otimes v_{2}-v_{2}\otimes v_{1}$. Denote $\left( %
\left[ \mathcal{V},\mathcal{V}\right] ,\left\Vert \cdot \right\Vert _{%
\mathcal{V}\otimes \mathcal{V}}\right) $ as the Banach space defined as the
completion of $\left\{ \sum_{i=1}^{n}\left[ v_{1}^{i},v_{2}^{i}\right] ,%
\text{ }v_{1}^{i},v_{2}^{i}\in \mathcal{V},\text{\ }n\geq 1\right\} $ w.r.t. 
$\left\Vert \cdot \right\Vert _{\mathcal{V}\otimes \mathcal{V}}$.
\end{notation}

In this manuscript, we assume  $\left\Vert v\otimes u\right\Vert _{\mathcal{V%
}\otimes \mathcal{U}}\leq \left\Vert v\right\Vert _{\mathcal{V}}\left\Vert
u\right\Vert _{\mathcal{U}}$, $\forall v\in \mathcal{V}$, $\forall u\in 
\mathcal{U}$.

\begin{definition}
\label{Definition area of two paths}Suppose $\mathcal{V}$ and $\mathcal{U}$
are two Banach spaces, and $\gamma _{1}\in C^{1-var}\left( \left[ 0,T\right]
,\mathcal{V}\right) $, $\gamma _{2}\in C^{1-var}\left( \left[ 0,T\right] ,%
\mathcal{U}\right) $.

Define the iterated integral of $\gamma _{1}$ and $\gamma _{2}$, $I\left(
\gamma _{1},\gamma _{2}\right) \in C\left( \bigtriangleup _{\left[ 0,T\right]
},\mathcal{V}\otimes \mathcal{U}\right) $, as%
\begin{equation*}
I\left( \gamma _{1},\gamma _{2}\right) \left( s,t\right)
=\diint\nolimits_{s<u_{1}<u_{2}<t}d\gamma _{1}\left( u_{1}\right) \otimes
d\gamma _{2}\left( u_{2}\right) \text{, }\forall 0\leq s\leq t\leq T
\end{equation*}

When $\mathcal{U}=\mathcal{V}$ (so $\gamma _{i}\in C^{1-var}\left( \left[ 0,T%
\right] ,\mathcal{V}\right) $, $i=1,2$), define $A\left( \gamma _{1},\gamma
_{2}\right) \in C\left( \bigtriangleup _{\left[ 0,T\right] },\left[ \mathcal{%
V},\mathcal{V}\right] \right) $ as 
\begin{equation*}
A\left( \gamma _{1},\gamma _{2}\right) \left( s,t\right) =\frac{1}{2}%
\diint\nolimits_{s<u_{1}<u_{2}<t}\left[ d\gamma _{1}\left( u_{1}\right)
,d\gamma _{2}\left( u_{2}\right) \right] \text{, }\forall 0\leq s\leq t\leq
T.
\end{equation*}
\end{definition}

The notation $I\left( \gamma _{1},\gamma _{2}\right) $ is used in the proof
of extension of Young integral, $A\left( \gamma _{1},\gamma _{2}\right) $ is
used to estimate $A\left( \gamma \right) $ when $\gamma =\gamma _{1}+\gamma
_{2}$.

\begin{definition}[area]
\label{Definition of area}Suppose $\gamma \in C^{1-var}\left( \left[ 0,T%
\right] ,\mathcal{V}\right) $. Define the area of $\gamma $, $A\left( \gamma
\right) \in C\left( \bigtriangleup _{\left[ 0,T\right] },\left[ \mathcal{V},%
\mathcal{V}\right] \right) :=A\left( \gamma ,\gamma \right) $.
\end{definition}

\begin{definition}[area operator]
The area operator is the operator defined on the set of continuous bounded
variation paths which sends $\gamma \ $to $A\left( \gamma \right) $.
\end{definition}

The area operator can be extended where the Riemann-Stieltjes integral $%
A\left( \gamma \right) $ is well-defined (e.g. $\mathcal{G}_{2}\left( 
\mathcal{V}\right) $ defined below).

When $\gamma \in C^{1-var}\left( \left[ 0,T\right] ,\mathcal{V}\right) $,
based on Young integral (i.e. $\left( \ref{Reimann area inequality}\right) $
below), 
\begin{equation*}
A\left( \gamma \right) \in C^{\frac{1}{2}-var}\left( \bigtriangleup _{\left[
0,T\right] },\left[ \mathcal{V},\mathcal{V}\right] \right) \subseteq
C^{0,1-var}\left( \bigtriangleup _{\left[ 0,T\right] },\left[ \mathcal{V},%
\mathcal{V}\right] \right) .
\end{equation*}%
On the other hand, because $\omega _{1}\left( \alpha ,\delta \right) \leq
\left\Vert \alpha -\alpha _{n}\right\Vert _{1-var}+\omega _{1}\left( \alpha
_{n},\delta \right) $ ($\omega _{1}$ defined at $\left( \ref{Definition
vanishing p-variation}\right) $), $C^{0,1-var}\left( \bigtriangleup _{\left[
0,T\right] },\left[ \mathcal{V},\mathcal{V}\right] \right) $ is closed under 
$1$-variation. Thus,%
\begin{equation}
\overline{\{A\left( \gamma \right) |\gamma \in C^{1-var}\left( \left[ 0,T%
\right] ,\mathcal{V}\right) \}}^{1-var}\hspace{-0.02in}\subseteq \hspace{%
-0.02in}C^{0,1-var}\left( \bigtriangleup _{\left[ 0,T\right] },\left[ 
\mathcal{V},\mathcal{V}\right] \right) .  \label{Area is of vanishing 1-var}
\end{equation}

\begin{definition}[weak geometric $2$-rough path]
Suppose $\gamma \in C\left( \left[ 0,T\right] ,\mathcal{V}\right) $, $\alpha
\in C\left( \bigtriangleup _{\left[ 0,T\right] },\left[ \mathcal{V},\mathcal{%
V}\right] \right) $. Then $\Gamma :=\left( \gamma ,\alpha \right) \in
C\left( \bigtriangleup _{\left[ 0,T\right] },\mathcal{V\oplus }\left[ 
\mathcal{V},\mathcal{V}\right] \right) $ is called a weak geometric $2$%
-rough path, if for any $0\leq s\leq u\leq t\leq T$,%
\begin{gather}
\alpha \left( s,t\right) =\alpha \left( s,u\right) +\alpha \left( u,t\right)
+\frac{1}{2}\left[ \gamma \left( u\right) -\gamma \left( s\right) ,\gamma
\left( t\right) -\gamma \left( u\right) \right] \text{,}
\label{property of multiplicativity} \\
\text{and }\left\Vert \Gamma \right\Vert _{G^{\left( 2\right) }}:=\left(
\left\Vert \gamma \right\Vert _{2-var}^{2}+\left\Vert \alpha \right\Vert
_{1-var}\right) ^{\frac{1}{2}}<\infty \text{.}  \notag
\end{gather}
\end{definition}

Property at $\left( \ref{property of multiplicativity}\right) $ is called
multiplicativity. $\left\Vert \cdot \right\Vert _{G^{\left( 2\right) }}$ is
called $2$-rough norm.

\begin{definition}[geometric $2$-rough path]
\label{Definition of geometric 2-rough path}$\Gamma :=\left( \gamma ,\alpha
\right) \in C\left( \bigtriangleup _{\left[ 0,T\right] },\mathcal{V\oplus }%
\left[ \mathcal{V},\mathcal{V}\right] \right) $ is a geometric $2$-rough
path, if there exist $\left\{ \gamma _{n}\right\} _{n}\subset
C^{1-var}\left( \left[ 0,T\right] ,\mathcal{V}\right) $ such that%
\begin{equation*}
\lim_{n\rightarrow \infty }\left\Vert \Gamma -\left( \gamma _{n},A\left(
\gamma _{n}\right) \right) \right\Vert _{G^{\left( 2\right) }}=0\text{.}
\end{equation*}
\end{definition}

One can verify that a geometric $2$-rough path is a weak geometric $2$-rough
path.

Thus, if $\left( \gamma ,\alpha \right) $ is a geometric $2$-rough path,
then $\gamma $ is of vanishing $2$-variation (because of $\left( \ref%
{equivalent relation for vanishing 2-var path}\right) $) and $\alpha $ is of
vanishing $1$-variation (because of $\left( \ref{Area is of vanishing 1-var}%
\right) $). Suppose $\gamma \in C^{0,2-var}\left( \left[ 0,T\right] ,%
\mathcal{V}\right) $, then we say $\gamma $ can be enhanced into a geometric 
$2$-rough path (or enhancible), if there exists $\alpha \in
C^{0,1-var}\left( \bigtriangleup _{\left[ 0,T\right] },\left[ \mathcal{V},%
\mathcal{V}\right] \right) $ such that $\left( \gamma ,\alpha \right) $ is a
geometric $2$-rough path.

\begin{notation}
\label{Notation G_2}Denote $\mathcal{G}_{2}\left( \mathcal{V}\right)
\subseteq C^{0,2-var}\left( \left[ 0,T\right] ,\mathcal{V}\right) $ as the
set of paths which admits an enhancement into a geometric $2$-rough path.
\end{notation}

$\mathcal{G}_{2}\left( \mathcal{V}\right) $ is invariant under
reparametrisation and contains, e.g. $C^{1-var}\left( \left[ 0,T\right] ,%
\mathcal{V}\right) $.

\section{Questions, answers and results}

Suppose $\gamma _{1}$ and $\gamma _{2}$ are continuous paths on $\left[ 0,T%
\right] $, consider the Riemann-Stieltjes integrals (whenever they exist):%
\begin{gather}
\alpha \left( s,t\right) =\diint\nolimits_{s<u_{1}<u_{2}<t}d\gamma
_{1}\left( u_{1}\right) \otimes d\gamma _{2}\left( u_{2}\right) ,\left(
s,t\right) \in \bigtriangleup _{\left[ 0,T\right] }
\label{Reimann integral area} \\
i\left( t\right) =\int_{0}^{t}\gamma _{1}\left( u\right) \otimes d\gamma
_{2}\left( u\right) ,t\in \left[ 0,T\right] .  \notag
\end{gather}%
If $\gamma _{1}$ is continuous and $\gamma _{2}$ of bounded variation, then $%
\alpha $ and $i$ are of bounded variation, and%
\begin{equation*}
\left\Vert \alpha \right\Vert _{1-var,\left[ 0,T\right] }\vee \left\Vert
i\right\Vert _{1-var,\left[ 0,T\right] }\leq \left\Vert \gamma
_{1}\right\Vert _{\infty -var,\left[ 0,T\right] }\left\Vert \gamma
_{2}\right\Vert _{1-var,\left[ 0,T\right] }.
\end{equation*}%
Young \cite{L. C. Young} demonstrated that, if $\gamma _{1}$ is of finite $p$%
-variation, $\gamma _{2}$ of finite $q$-variation, and $p>1$, $q>1$, $%
p^{-1}+q^{-1}>1$, then $\alpha $ and $i$ are still well-defined, and (based
on Thm 1.16 in \cite{T. J. Lyons notes}) 
\begin{gather}
\left\Vert \alpha \right\Vert _{\left( p^{-1}+q^{-1}\right) ^{-1}-var,\left[
0,T\right] }\leq C_{p,q}\left\Vert \gamma _{1}\right\Vert _{p-var,\left[ 0,T%
\right] }\left\Vert \gamma _{2}\right\Vert _{q-var,\left[ 0,T\right] }\text{,%
}  \label{Reimann area inequality} \\
\left\Vert i\right\Vert _{p-var,\left[ 0,T\right] }\leq \left(
C_{p,q}+1\right) \left\Vert \gamma _{1}\right\Vert _{p-var,\left[ 0,T\right]
}\left\Vert \gamma _{2}\right\Vert _{q-var,\left[ 0,T\right] }.  \notag
\end{gather}%
($\alpha $ is of finite $\left( p^{-1}+q^{-1}\right) ^{-1}$-variation, $%
\left( p^{-1}+q^{-1}\right) ^{-1}<1$; $i$ is of finite $q$-variation, $q>1$,
the same as $\gamma _{2}$.) However, the existence of integral is
problematic when $p^{-1}+q^{-1}=1$. In the special case $\gamma _{1}=\gamma
_{2}:=\gamma $, the definition of $\int \gamma \otimes d\gamma $ is
problematic when $\gamma $ is of (vanishing) $2$-variation.

While according to rough path theory, if a vanishing $2$-variation path $%
\gamma $ can be enhanced into a geometric $2$-rough path, then one can give
meaning to differential equation driven by enhanced $\gamma $, and the
solution exists and is unique under certain regularity assumptions on the
vector field (see \cite{T. J. Lyons notes}, \cite{T. J. Lyons and Z. Qian}, 
\cite{P. Friz}).

In this manuscript, we study the properties of the area operator and of $%
\mathcal{G}_{2}\left( \mathcal{V}\right) $, through several questions. (This
manuscript is intended to be some notes about area and geometric $2$-rough
paths, and main results are as listed in the abstract.)

\begin{problem}
\label{Problem existence of Reimann Stieltjes integral}Suppose $\mathcal{V}\ 
$is a Banach spaces, and $\gamma \in C^{0,2-var}\left( \left[ 0,T\right] ,%
\mathcal{V}\right) $. Does the Riemann-Stieltjes integration $%
\int_{0}^{T}\gamma \otimes d\gamma $ exist; if it exists, what is the
regularity of path $t\mapsto \int_{0}^{t}\gamma \otimes d\gamma $.
\end{problem}

In 2009, P. L. Lions \cite{Pierre} sketched a proof of the statement that:
if $\gamma _{1}$ and $\gamma _{2}$ are of vanishing $2$-variation, then $%
\int_{0}^{\cdot }\gamma _{1}\otimes d\gamma _{2}$ can be defined through
Riemann sums and is of vanishing $2$-variation. His statement, however, is
incorrect: first of all, the Riemann-Stieltjes integral may not exist
(Example \ref{Example non-existence of Reimann integral}); secondly, (when
restricted to continuous bounded variation paths equipped with $2$%
-variation) the path$\mapsto $area operator is not bounded (even when area
equipped with uniform norm).

In \cite{P. Friz}(p194), the authors give an example of possible divergence
of Riemann sums (w.r.t. finite partition $D$) as $\left\vert D\right\vert
\rightarrow 0$. Here we modify the example and get non-existence.

For Riemann-Stieltjes integral $\int \gamma \otimes d\gamma $, selecting
different representative points only produces a negligible error when $%
\gamma \in C^{0,2-var}\left( \left[ 0,T\right] ,\mathcal{V}\right) $.
Actually, suppose $\gamma $ is a path defined on $\left[ 0,T\right] $ of
vanishing $2$-variation, and $D=\left\{ t_{j}\right\} $ is a finite
partition satisfying $\left\vert D\right\vert \leq \delta $. Then for any $%
\left\{ \eta _{j},\xi _{j}\right\} _{j}$ satisfying $t_{j}\leq \eta _{j},\xi
_{j}\leq t_{j+1}$, we have (assume $\left\Vert u\otimes v\right\Vert \leq
\left\Vert u\right\Vert \left\Vert v\right\Vert $):%
\begin{multline*}
\left\Vert \sum_{j}\left( \gamma \left( \eta _{j}\right) -\gamma \left( \xi
_{j}\right) \right) \otimes \left( \gamma \left( t_{j+1}\right) -\gamma
\left( t_{j}\right) \right) \right\Vert \\
\leq \left( \sum_{j}\left\Vert \gamma \left( \eta _{j}\right) -\gamma \left(
\xi _{j}\right) \right\Vert ^{2}\right) ^{\frac{1}{2}}\left(
\sum_{j}\left\Vert \gamma \left( t_{j+1}\right) -\gamma \left( t_{j}\right)
\right\Vert ^{2}\right) ^{\frac{1}{2}}.
\end{multline*}%
Since $\left\{ \eta _{j},\xi _{j}\right\} _{j}$ can be treated as points in
another finite partition whose mesh is less or equal $2\delta $, so%
\begin{multline*}
\lim_{\delta \rightarrow 0}\sup_{D,\left\vert D\right\vert \leq \delta
}\left\Vert \sum_{j,t_{j}\in D}\left( \gamma \left( \eta _{j}\right) -\gamma
\left( \xi _{j}\right) \right) \otimes \left( \gamma \left( t_{j+1}\right)
-\gamma \left( t_{j}\right) \right) \right\Vert \\
\leq \lim_{\delta \rightarrow 0}\sup_{D,\left\vert D\right\vert \leq 2\delta
}\sum_{j,t_{j}\in D}\left\Vert \gamma \left( t_{j+1}\right) -\gamma \left(
t_{j}\right) \right\Vert ^{2}=0\text{.}
\end{multline*}%
However, problem may occur when one keeps on inserting partition
points---the area generated by the added points could be infinite. In
Example \ref{Example non-existence of Reimann integral}, we give a path $%
f\in C^{0,2-var}\left( \left[ 0,1\right] ,%
%TCIMACRO{\U{2102} }%
%BeginExpansion
\mathbb{C}
%EndExpansion
\right) $: 
\begin{gather}
f\left( t\right) =\sum_{n=1}^{\infty }\sum_{k=l_{n}}^{l_{n+1}-1}\frac{1}{k^{%
\frac{1}{2}}2^{k}}\exp \left( 2\pi i\left( -1\right) ^{n}2^{2k}t\right) 
\text{, \ }t\in \left[ 0,1\right] \text{,}  \label{Definition of f} \\
\text{where }c>\pi \text{, \ }c^{n}\leq \sum_{k=l_{n}}^{l_{n+1}}k^{-1}\leq
c^{n}+1\text{, }\forall n\geq 1\text{, }  \notag
\end{gather}%
which satisfies that, for any $a\in \left[ -\infty ,\infty \right] $, there
exists a sequence of finite partitions $\left\{ D_{n}^{a}\right\} _{n}$ of $%
\left[ 0,1\right] $ ($x:=\func{Re}f$, $y:=\func{Im}f$),%
\begin{equation}
\lim_{n\rightarrow \infty }\left\vert D_{n}^{a}\right\vert =0\text{ but }%
\lim_{n\rightarrow \infty }\sum_{k,t_{k}\in D_{n}^{a}}x\left( t_{k}\right)
y\left( t_{k+1}\right) -x\left( t_{k+1}\right) y\left( t_{k}\right) =a.
\label{inner expression divergence1}
\end{equation}%
As a result, since the Riemann sum w.r.t. finite partition $D$ is%
\begin{eqnarray*}
&&\sum_{k,t_{k}\in D}\frac{1}{2}\left( f\left( t_{k}\right) +f\left(
t_{k+1}\right) \right) \otimes \left( f\left( t_{k+1}\right) -f\left(
t_{k}\right) \right) \\
&=&\frac{1}{2}\sum_{k,t_{k}\in D}\left[ f\left( t_{k}\right) ,f\left(
t_{k+1}\right) \right] +\frac{1}{2}f\left( T\right) ^{\otimes 2}-\frac{1}{2}%
f\left( 0\right) ^{\otimes 2},
\end{eqnarray*}%
which does not have a limit as $\left\vert D\right\vert \rightarrow 0$
because of $\left( \ref{inner expression divergence1}\right) $, so the
Riemann-Stieltjes integral $\int_{0}^{1}f\otimes df$ does not exist.

$f$ at $\left( \ref{Definition of f}\right) $ is in $C^{0,2-var}\left( \left[
0,T\right] ,%
%TCIMACRO{\U{2102} }%
%BeginExpansion
\mathbb{C}
%EndExpansion
\right) $. Similar argument can be applied to $C^{0,2-var}\left( \left[ 0,T%
\right] ,\mathcal{V}\right) $ when $\dim \left( \mathcal{V}\right) \geq 2$.
Select $e_{1}$, $e_{2}\in \mathcal{V}$, s.t. $\left[ e_{1},e_{2}\right] \neq
0$. With $f$ at $\left( \ref{Definition of f}\right) $, define $\widetilde{f}%
=\left( \func{Re}f\right) e_{1}+\left( \func{Im}f\right) e_{2}$. Then
following similar reasoning, the Riemann-Stieltjes integral $\int_{0}^{1}%
\widetilde{f}\otimes d\widetilde{f}$ does not exist, and for any $a\in \left[
-\infty ,\infty \right] $, there exists a sequence of finite partitions $%
\left\{ D_{n}^{a}\right\} _{n}$ of $\left[ 0,1\right] $, s.t.%
\begin{equation}
\lim_{n\rightarrow \infty }\left\vert D_{n}^{a}\right\vert =0\text{ but }%
\lim_{n\rightarrow \infty }\left\Vert \sum_{k,t_{k}\in D_{n}^{a}}\left[ 
\widetilde{f}\left( t_{k}\right) ,\widetilde{f}\left( t_{k+1}\right) \right]
\right\Vert =a.  \label{inner expression divergence2}
\end{equation}

When $\dim \left( \mathcal{V}\right) =1$, the Riemann-Stieltjes integral $%
\int_{0}^{T}\gamma d\gamma $ does exist for any $\gamma \in
C^{0,2-var}\left( \left[ 0,T\right] ,\mathcal{V}\right) $ and equals to $%
2^{-1}\left( \gamma ^{2}\left( T\right) -\gamma ^{2}\left( 0\right) \right) $%
, because the vector field is commutative in one-dimensional case, so the
Lie bracket vanishes. Thus, any one-dimensional vanishing $2$-variation path
is in $\mathcal{G}_{2}\left( \mathcal{V}\right) $, and 
\begin{equation}
\mathcal{G}_{2}\left( \mathcal{V}\right) =C^{0,2-var}\left( \left[ 0,T\right]
,\mathcal{V}\right) \text{ when }\dim \left( \mathcal{V}\right) =1.
\label{G2 equals to C0-2var when dim(V)=1}
\end{equation}

\begin{problem}
When equipping $C^{1-var}\left( \left[ 0,T\right] ,\mathcal{V}\right) $ with 
$2$-variation norm, is the area operator continuous, or bounded?
\end{problem}

When $\dim \left( \mathcal{V}\right) =1$, area vanishes, so the area
operator is trivial. In that case it is continuous and bounded. When $\dim
\left( \mathcal{V}\right) \geq 2$, as a consequence of possible
non-existence of the Riemann-Stieltjes integral $\left( \ref{inner
expression divergence2}\right) $, the area operator is not continuous nor
bounded.

Actually, suppose $\dim \left( \mathcal{V}\right) \geq 2$, $\gamma \in
C^{0,2-var}\left( \left[ 0,T\right] ,\mathcal{V}\right) $, and $\gamma ^{D}$
the piecewise linear paths coincides with $\gamma $ on points in $D$ (as
defined at $\left( \ref{Definition gammaD}\right) $). Then after direct
computation, the Riemann sum of $\int \gamma \otimes d\gamma $ w.r.t. $D$
equals to $A\left( \gamma ^{D}\right) \left( 0,T\right) $ plus a constant:%
\begin{eqnarray}
&&\sum_{k,t_{k}\in D}\frac{1}{2}\left( \gamma \left( t_{k}\right) +\gamma
\left( t_{k+1}\right) \right) \otimes \left( \gamma \left( t_{k+1}\right)
-\gamma \left( t_{k}\right) \right)
\label{Reimann sums and area of piecewise linear path} \\
&=&\frac{1}{2}\sum_{k,t_{k}\in D}\left[ \gamma \left( t_{k}\right) ,\gamma
\left( t_{k+1}\right) \right] +\frac{1}{2}\gamma ^{\otimes 2}\left( T\right)
-\frac{1}{2}\gamma ^{\otimes 2}\left( 0\right)  \notag \\
&=&A\left( \gamma ^{D}\right) \left( 0,T\right) +\frac{1}{2}\left( \gamma
\left( T\right) +\gamma \left( 0\right) \right) \otimes \left( \gamma \left(
T\right) -\gamma \left( 0\right) \right) .  \notag
\end{eqnarray}%
Thus, based on $\left( \ref{inner expression divergence2}\right) $, there
exists a path $f:\left[ 0,1\right] \rightarrow \mathcal{V}$ of vanishing $2$%
-variation, such that for any $a\in \left[ -\infty ,\infty \right] $, there
exists a sequence of finite partitions $\left\{ D_{n}^{a}\right\} $ of $%
\left[ 0,1\right] $, satisfying $\lim_{n\rightarrow \infty }\left\vert
D_{n}^{a}\right\vert =0$ but $\lim_{n\rightarrow \infty }\left\Vert A\left(
f^{D_{n}^{a}}\right) \left( 0,1\right) \right\Vert =a$. While $f^{D_{n}^{a}}$
converges to $f$ in~$2$-variation when $n$ tends to infinity (based on $%
\left( \ref{equivalent relation for vanishing 2-var path}\right) $). Thus,
the area operator is not continuous and not bounded, at least when area is
equipped with uniform norm. Thus, there is No universal constant $C$, s.t. $%
\left\Vert A\left( \gamma \right) \right\Vert _{\infty -var}\leq C\left\Vert
\gamma \right\Vert _{2-var}^{2}$ for all $\gamma \in C^{1-var}\left( \left[
0,T\right] ,\mathcal{V}\right) $. Compare with Young integral: for any $p\in
\lbrack 1,2)$, there exists $C_{p}$, s.t. for any $\gamma \in
C^{p-var}\left( \left[ 0,T\right] ,\mathcal{V}\right) $, $\left\Vert A\left(
\gamma \right) \right\Vert _{\frac{p}{2}-var}\leq C_{p}\left\Vert \gamma
\right\Vert _{p-var}^{2}$ (i.e. $\left( \ref{Reimann area inequality}\right) 
$).

Moreover, by modifying our example, we get a sequence of continuous bounded
variation paths (Example \ref{Example divergence in p-variation,p>1} at p%
\pageref{Example divergence in p-variation,p>1}) converging to zero in $2$%
-variation, but their area diverge at any non-trivial point: $\left(
s,t\right) \in \bigtriangleup _{\left[ 0,T\right] }$, $s<t$. Therefore, when
equipping bounded variation paths with $2$-variation, the area operator is
not continuous nor bounded, even in the sense of at some single point. (The
paths in Example \ref{Example divergence in p-variation,p>1} are in $%
C^{0,2-var}\left( \left[ 0,1\right] ,%
%TCIMACRO{\U{2102} }%
%BeginExpansion
\mathbb{C}
%EndExpansion
\right) $, but can be generalized to $C^{0,2-var}\left( \left[ 0,1\right] ,%
\mathcal{V}\right) $ whenever $\dim \left( \mathcal{V}\right) \geq 2$.)

\begin{problem}
When $C^{1-var}\left( \left[ 0,T\right] ,\mathcal{V}\right) $ is equipped
with $2$-variation norm, is the path$\mapsto $area operator\ closable in $p$%
-variation? In other words, if $\left\{ \gamma _{n}\right\} _{n}$\ and $%
\left\{ \gamma _{m}\right\} _{m}$\ are two sequence of paths in $%
C^{1-var}\left( \left[ 0,T\right] ,\mathcal{V}\right) $ converging in $2$%
-variation to the same limit, and $\left\{ A\left( \gamma _{n}\right)
\right\} _{n}$\ and $\left\{ A\left( \gamma _{m}\right) \right\} _{m}$\
converge in $p$-variation respectively. Then is that true that $\left\{
A\left( \gamma _{n}\right) \right\} _{n}$\ and $\left\{ A\left( \gamma
_{m}\right) \right\} _{m}$\ converge to the same limit?
\end{problem}

When $p>1$, not true. When $p=1$, is true. (We assume $\dim \left( \mathcal{V%
}\right) \geq 2$, because area vanishes for one-dimensional paths.)

For $p>1$, an illustrative example is $r_{n}\left( t\right) =\left( \frac{%
\cos nt}{\sqrt{n}},\frac{\sin nt}{\sqrt{n}}\right) $, $t\in \left[ 0,2\pi %
\right] $, $n\geq 1$. $r_{n}$ converges to $0$ in $q$-variation for any $q>2$%
, but their area converge to $t-s$ in $p$-variation for any $p>1$:%
\begin{equation*}
\int_{s}^{t}\left( \frac{\cos nu}{\sqrt{n}}-\frac{\cos ns}{\sqrt{n}}\right) d%
\frac{\sin nu}{\sqrt{n}}-\left( \frac{\sin nu}{\sqrt{n}}-\frac{\sin ns}{%
\sqrt{n}}\right) d\frac{\cos nu}{\sqrt{n}}=t-s-\frac{\sin n\left( t-s\right) 
}{n}.
\end{equation*}%
and 
\begin{equation*}
\left\Vert \frac{1}{\sqrt{n}}\exp \left( int\right) \right\Vert
_{q-var}\lesssim \frac{1}{n^{\frac{1}{2}-\frac{1}{q}}}\text{, }\left\Vert 
\frac{\sin n\left( t-s\right) }{n}\right\Vert _{p-var}\lesssim \frac{1}{n^{1-%
\frac{1}{p}}}.
\end{equation*}%
Thus, $\left( 0,0\right) $ and $\left( 0,t-s\right) $ are two geometric $q$%
-rough paths with the same first level path for any $q\in \left( 2,3\right) $%
. (Geometric $q$-rough paths $q\in \left( 2,3\right) $ are elements in the
closure of $\left\{ \left( \gamma ,A\left( \gamma \right) \right) |\gamma
\in C^{1-var}\left( \left[ 0,T\right] ,\mathcal{V}\right) \right\} $ under
the metric%
\begin{equation*}
\left. d\left( \left( \gamma _{1},A\left( \gamma _{1}\right) \right) ,\left(
\gamma _{2},A\left( \gamma _{2}\right) \right) \right) :=\left( \left\Vert
\gamma _{1}-\gamma _{2}\right\Vert _{q-var}^{q}+\left\Vert A\left( \gamma
_{1}\right) -A\left( \gamma _{2}\right) \right\Vert _{\frac{q}{2}-var}^{%
\frac{q}{2}}\right) ^{\frac{1}{q}}.\right)
\end{equation*}

However, $r_{n}\ $are uniformly bounded in $2$-variation, but do not
converge in $2$-variation ($||n^{-\frac{1}{2}}\cos \left( nt\right) -\left(
2n\right) ^{-\frac{1}{2}}\cos \left( 2nt\right) ||_{2-var}\geq 2$, $\forall n
$). To construct our example, we add in a decay factor, sum finitely of them
together to compensate the decaying effect on $t-s$, and end up with
functions $\left\{ g_{n}\right\} _{n}\subset C^{0,2-var}\left( \left[ 0,1%
\right] ,%
%TCIMACRO{\U{2102} }%
%BeginExpansion
\mathbb{C}
%EndExpansion
\right) $ (Example \ref{Example, non-closability} at p\pageref{Example,
non-closability})%
\begin{gather}
g_{n}\left( t\right) =\left( \pi \sum_{k=l_{n}}^{l_{n+1}-1}\frac{1}{k}%
\right) ^{-\frac{1}{2}}\sum_{k=l_{n}}^{l_{n+1}-1}\frac{1}{k^{\frac{1}{2}%
}2^{k}}\exp \left( 2\pi i2^{2k}t\right) \text{, }t\in \left[ 0,1\right] ,
\label{Definition of g_n} \\
\text{where }\sum_{k=l_{n}}^{l_{n+1}-1}k^{-1}\geq 1\text{, }\forall n\geq 1%
\text{.}  \notag
\end{gather}%
We prove that $g_{n}$ converge in $2$-variation to zero as $n$ tends to
infinity, but their area converge to $t-s$ in $p$-variation, for any $p>1$.

For Banach space $\mathcal{V}$, $\dim \left( \mathcal{V}\right) \geq 2$,
select $e_{1},e_{2}\in \mathcal{V}$, s.t. $\left[ e_{1},e_{2}\right] \neq 0$%
. With $g_{n}$ defined at $\left( \ref{Definition of g_n}\right) $, define $%
\widetilde{g_{n}}:=\left( \func{Re}g_{n}\right) e_{1}+\left( \func{Im}%
g_{n}\right) e_{2}$. Then $\left\{ \widetilde{g_{n}}\right\} _{n}\subset
C^{1-var}\left( \left[ 0,1\right] ,\mathcal{V}\right) $, $\lim_{n\rightarrow
\infty }\left\Vert \widetilde{g_{n}}\right\Vert _{2-var}=0\,$\ and 
\begin{equation*}
\lim_{n\rightarrow \infty }\left\Vert A\left( \widetilde{g_{n}}\right)
\left( s,t\right) -\left( t-s\right) \left[ e_{1},e_{2}\right] \right\Vert
_{p-var}=0\text{ for any }p>1\text{.}
\end{equation*}

When $p=1$, if $\left( \gamma ,\alpha _{1}\right) $ and $\left( \gamma
,\alpha _{2}\right) $ are two geometric $2$-rough paths, then $\alpha
_{1}-\alpha _{2}:=\varphi $ is additive thus a path. Moreover, based on $%
\left( \ref{Area is of vanishing 1-var}\right) $, $\alpha _{1}$ and $\alpha
_{2}$ are in $C^{0,1-var}\left( \bigtriangleup _{\left[ 0,T\right] },\left[ 
\mathcal{V},\mathcal{V}\right] \right) $ of vanishing $1$-variation, then $%
\varphi $ is of vanishing $1$-variation. While a path of vanishing $1$%
-variation is constant, so $\alpha _{1}=\alpha _{2}$.

For the same reason we have: the projection of a geometric $n$-rough path to
its first $n-1$ level elements is injective for any $n\in 
%TCIMACRO{\U{2115} }%
%BeginExpansion
\mathbb{N}
%EndExpansion
$, $n\geq 2$. While in Remark 9.13 (case ii b2) in \cite{P. Friz}, the
authors commented that the projection is not a injection without providing a
proof.

\begin{problem}
\label{Problem non-existence of enhancement}Is that true that every path in $%
C^{0,2-var}\left( \left[ 0,T\right] ,\mathcal{V}\right) $ admits an
enhancement into a (weak) geometric $2$-rough path? (i.e. is the inclusion $%
\mathcal{G}_{2}\left( \mathcal{V}\right) \subseteq C^{0,2-var}\left( \left[
0,T\right] ,\mathcal{V}\right) $ strict?)
\end{problem}

When $\dim \left( \mathcal{V}\right) =1$, $\mathcal{G}_{2}\left( \mathcal{V}%
\right) =C^{0,2-var}\left( \left[ 0,T\right] ,\mathcal{V}\right) $ (see $%
\left( \ref{G2 equals to C0-2var when dim(V)=1}\right) $).

When $\dim \left( \mathcal{V}\right) \geq 2$, $\mathcal{G}_{2}\left( 
\mathcal{V}\right) \subsetneqq C^{0,2-var}\left( \left[ 0,T\right] ,\mathcal{%
V}\right) $, and an example is given in Thm 9.12 \cite{P. Friz}. Actually,
following the same reasoning as in Thm 9.12 \cite{P. Friz}, we use $f$
defined at $\left( \ref{Definition of f}\right) $ to prove that $\mathcal{G}%
_{2}\left( \mathcal{V}\right) \subsetneqq C^{0,2-var}\left( \left[ 0,T\right]
,\mathcal{V}\right) $ when $\dim \left( \mathcal{V}\right) \geq 2$. Select $%
e_{1}$, $e_{2}\in \mathcal{V}$, s.t. $\left[ e_{1},e_{2}\right] \neq 0$.
With $f\ $at $\left( \ref{Definition of f}\right) $, denote $\widetilde{f}%
:=\left( \func{Re}f\right) e_{1}+\left( \func{Im}f\right) e_{2}$, so $%
\widetilde{f}\in C^{0,2-var}\left( \left[ 0,T\right] ,\mathcal{V}\right) $.
Assume that $(\widetilde{f},\alpha )$ is a weak geometric $2$-rough path.
Then using multiplicativity of $(\widetilde{f},\alpha )$ (i.e. $\left( \ref%
{property of multiplicativity}\right) $), for any finite partitions $D$, we
have 
\begin{align*}
& \left\Vert \sum_{j,t_{j}\in D}\left[ \widetilde{f}\left( t_{j}\right) ,%
\widetilde{f}\left( t_{j+1}\right) \right] -\left[ \widetilde{f}\left(
0\right) ,\widetilde{f}\left( 1\right) \right] \right\Vert =2\left\Vert
\alpha \left( 0,1\right) -\sum_{j,t_{j}\in D}\alpha \left(
t_{j},t_{j+1}\right) \right\Vert \\
& \leq 4\left\Vert \alpha \right\Vert _{1-var}<\infty .
\end{align*}%
Then contradiction is established, if $\sum_{j,t_{j}\in D}\left[ \widetilde{f%
}\left( t_{j}\right) ,\widetilde{f}\left( t_{j+1}\right) \right] $ are not
uniformly bounded for all finite partitions, which is true because of $%
\left( \ref{inner expression divergence2}\right) $.

Then a natural question arises:

\begin{problem}
What is the condition for vanishing $2$-variation paths to be enhancible
(i.e. in $\mathcal{G}_{2}\left( \mathcal{V}\right) $)?
\end{problem}

We prove that:

\begin{theorem}
\label{Theorem condition to be enhancible}Suppose $\gamma \in
C^{0,2-var}\left( \left[ 0,T\right] ,\mathcal{V}\right) $. Then $\gamma \in 
\mathcal{G}_{2}\left( \mathcal{V}\right) $ if and only if $A\left( \gamma
^{D}\right) $ converges in $1$-variation as $\left\vert D\right\vert
\rightarrow 0$.
\end{theorem}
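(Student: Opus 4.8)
The plan is to prove both implications by comparing the piecewise‑linear area $A(\gamma^{D})$ with the candidate second level via Chen's identity. Write $\beta^{D}:=A(\gamma^{D})-\alpha$, and recall Wiener's characterization $\left( \ref{equivalent relation for vanishing 2-var path}\right) $, which gives $\left\Vert \gamma^{D}-\gamma\right\Vert _{2-var}\rightarrow 0$ as $\left\vert D\right\vert \rightarrow 0$ for every $\gamma\in C^{0,2-var}$.

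\textbf{Easy direction} ($\Leftarrow$). Suppose $A(\gamma^{D})\rightarrow \alpha$ in $1$-variation. Fix any $D_{n}$ with $\left\vert D_{n}\right\vert \rightarrow 0$; each $\gamma^{D_{n}}$ is piecewise linear, hence lies in $C^{1-var}\left( \left[ 0,T\right] ,\mathcal{V}\right) $, and
\[
\left\Vert \left( \gamma ,\alpha \right) -\left( \gamma ^{D_{n}},A\left(
\gamma ^{D_{n}}\right) \right) \right\Vert _{G^{\left( 2\right) }}=\left(
\left\Vert \gamma -\gamma ^{D_{n}}\right\Vert _{2-var}^{2}+\left\Vert \alpha
-A\left( \gamma ^{D_{n}}\right) \right\Vert _{1-var}\right) ^{\frac{1}{2}
}\rightarrow 0.
\]
By Definition \ref{Definition of geometric 2-rough path}, $\left( \gamma ,\alpha \right) $ is a geometric $2$-rough path (the limit $\alpha$ takes values in $\left[ \mathcal{V},\mathcal{V}\right] $ and lies in $C^{0,1-var}$ since, by $\left( \ref{Area is of vanishing 1-var}\right) $, that space is closed under $1$-variation), so $\gamma\in \mathcal{G}_{2}\left( \mathcal{V}\right) $.

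\textbf{Hard direction} ($\Rightarrow$). Let $\left( \gamma ,\alpha \right) $ be an enhancement; then $\alpha$ is multiplicative $\left( \ref{property of multiplicativity}\right) $ and of vanishing $1$-variation. The starting point is a grid computation: a straight segment has zero area, so $A(\gamma^{D})(t_{j},t_{j+1})=0$, and applying Chen's expansion to both $A(\gamma^{D})$ and $\alpha$ over a block of consecutive grid points the identical cross-bracket terms $\tfrac{1}{2}\sum_{j<l}\left[ \Delta _{j},\Delta _{l}\right] $ (with $\Delta _{j}=\gamma \left( t_{j+1}\right) -\gamma \left( t_{j}\right) $) cancel, leaving
\[
\beta ^{D}\left( t_{i},t_{k}\right) =A\left( \gamma ^{D}\right) \left(
t_{i},t_{k}\right) -\alpha \left( t_{i},t_{k}\right) =-\sum_{j=i}^{k-1}\alpha
\left( t_{j},t_{j+1}\right) .
\]
Hence on grid-aligned partitions $\sum \left\Vert \beta ^{D}\right\Vert \leq \sum_{j}\left\Vert \alpha \left( t_{j},t_{j+1}\right) \right\Vert \leq \omega _{1}\left( \alpha ,\left\vert D\right\vert \right) \rightarrow 0$. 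For an arbitrary partition $D^{\prime }=\left\{ s_{k}\right\} $ I expand each $\beta ^{D}\left( s_{k},s_{k+1}\right) $ over the three intervals $\left[ s_{k},a_{k}\right] ,\left[ a_{k},b_{k}\right] ,\left[ b_{k},s_{k+1}\right] $, where $\left[ a_{k},b_{k}\right] $ is the largest sub-interval with endpoints in $D$ (if $s_{k},s_{k+1}$ lie in one cell of $D$ there are no interior grid points and $A(\gamma^{D})(s_{k},s_{k+1})=0$, so $\beta ^{D}=-\alpha $ there). Since $A(\gamma^{D})$ vanishes on the two end-caps and the grid identity holds on $\left[ a_{k},b_{k}\right] $, this yields (i) a sum of $\alpha$-increments over the intervals of the common refinement $D\cup D^{\prime }$, and (ii) a bounded number of boundary bracket terms per $k$, each bilinear in the interpolation errors $\delta _{s_{k}}:=\gamma ^{D}\left( s_{k}\right) -\gamma \left( s_{k}\right) $ and in increments of $\gamma$.

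Part (i) is $\leq \sum_{\text{intervals of }D\cup D^{\prime }}\left\Vert \alpha \left( \cdot \right) \right\Vert \leq \omega _{1}\left( \alpha ,\left\vert D\right\vert \right) \rightarrow 0$, uniformly in $D^{\prime }$. \emph{The main obstacle is part} (ii): because we sit at the critical exponent $p=q=2$, the Young/area inequality $\left( \ref{Reimann area inequality}\right) $ is unavailable, and these sums must be controlled directly. A typical term is $\sum_{k}\left\Vert \delta _{s_{k}}\right\Vert \left\Vert \gamma \left( b_{k}\right) -\gamma \left( a_{k}\right) \right\Vert $, and two observations make it vanish. First, a bracket term occurs only when $\left[ s_{k},s_{k+1}\right] $ straddles a grid point, and such intervals carry disjoint grid points; hence their left (and right) endpoints lie in \emph{distinct} cells of $D$. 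Second, by convexity $\left\Vert \delta _{s_{k}}\right\Vert \leq \max \left( \left\Vert \gamma \left( t_{m}\right) -\gamma \left( s_{k}\right) \right\Vert ,\left\Vert \gamma \left( s_{k}\right) -\gamma \left( t_{m+1}\right) \right\Vert \right) $ for the cell $\left[ t_{m},t_{m+1}\right] \ni s_{k}$. As the cells are distinct, the intervals $\left[ t_{m},s_{k}\right] ,\left[ s_{k},t_{m+1}\right] $ are disjoint and belong to the partition $D\cup \left\{ s_{k}\right\} $ of mesh $\leq \left\vert D\right\vert $, so
\[
\sum_{k}\left\Vert \delta _{s_{k}}\right\Vert ^{2}\leq \sum_{\text{intervals
of }D\cup \left\{ s_{k}\right\} }\left\Vert \gamma \left( \cdot \right)
\right\Vert ^{2}\leq \omega _{2}\left( \gamma ,\left\vert D\right\vert
\right) ^{2},
\]
which tends to $0$ precisely because $\gamma$ has \emph{vanishing} $2$-variation. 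Pairing this by Cauchy--Schwarz with $\sum_{k}\left\Vert \gamma \left( b_{k}\right) -\gamma \left( a_{k}\right) \right\Vert ^{2}\leq \left\Vert \gamma \right\Vert _{2-var}^{2}$ (disjoint intervals), and treating the remaining bracket terms identically (each pairs a factor $\left\Vert \delta \right\Vert $ or a cap increment with a $\gamma$-increment over disjoint intervals, using $\left\Vert \left[ u,v\right] \right\Vert \leq 2\left\Vert u\right\Vert \left\Vert v\right\Vert $), bounds (ii) by $C\,\omega _{2}\left( \gamma ,\left\vert D\right\vert \right) \left( \left\Vert \gamma \right\Vert _{2-var}+\omega _{2}\left( \gamma ,\left\vert D\right\vert \right) \right) $. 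Taking the supremum over $D^{\prime }$ gives
\[
\left\Vert A\left( \gamma ^{D}\right) -\alpha \right\Vert _{1-var}\leq \omega
_{1}\left( \alpha ,\left\vert D\right\vert \right) +C\,\omega _{2}\left(
\gamma ,\left\vert D\right\vert \right) \left( \left\Vert \gamma \right\Vert
_{2-var}+\omega _{2}\left( \gamma ,\left\vert D\right\vert \right) \right)
\xrightarrow[\left\vert D\right\vert \rightarrow 0]{}0,
\]
which is the claim. The delicate point --- and the reason this is a genuine ``$p^{-1}+q^{-1}=1$'' phenomenon --- is that finiteness of $\left\Vert \gamma \right\Vert _{2-var}$ alone does not suffice; it is the decay of $\omega _{2}\left( \gamma ,\cdot \right) $ that forces the boundary areas created by the interpolation to disappear in the limit.
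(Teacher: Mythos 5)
Your proof is correct, and both directions rest on the same two analytic inputs as the paper's: the grid identity $A\left(\gamma^{D}\right)\left(t_{i},t_{k}\right)-\alpha\left(t_{i},t_{k}\right)=-\sum_{j}\alpha\left(t_{j},t_{j+1}\right)$ (Lemma \ref{Lemma relations path and area}), the vanishing $1$-variation of $\alpha$, and Cauchy--Schwarz against $\omega_{2}\left(\gamma,\left\vert D\right\vert\right)$ for the boundary bracket terms. The organization, however, is genuinely different. The paper never estimates $\left\Vert A\left(\gamma^{D}\right)-\alpha\right\Vert_{1-var}$ directly: it runs a Cauchy argument, comparing $A\left(\gamma^{D_{1}}\right)$ with $A\left(\gamma^{D_{2}}\right)$ for $D_{2}$ a refinement of $D_{1}$ via Lemma \ref{Lemma difference of 1-var of area copy(1)}, and only then brings in $\alpha$ to kill the remaining term $\sum_{k}\left\Vert A\left(\gamma^{D_{2}}\right)\right\Vert_{1-var,\left[t_{k},t_{k+1}\right]}$. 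Because both objects in that comparison are areas of piecewise-linear paths agreeing on the coarse grid, the cross terms at the decomposition points vanish automatically and the interpolation errors are absorbed into increments of $\gamma^{D_{1}}-\gamma^{D_{2}}$, bounded cell by cell by $2\left\Vert\gamma\right\Vert_{2-var,\left[t_{k},t_{k+1}\right]}$. Your direct comparison with $\alpha$ instead has to reconcile the multiplicativity of $A\left(\gamma^{D}\right)$ over $\gamma^{D}$ with that of $\alpha$ over $\gamma$, which is exactly where your $\delta_{s_{k}}$ terms come from; your observation that the straddling endpoints of an arbitrary test partition sit in distinct cells of $D$, so that $\sum_{k}\left\Vert\delta_{s_{k}}\right\Vert^{2}\leq\omega_{2}\left(\gamma,\left\vert D\right\vert\right)^{2}$, is the right way to close this and is the one step the paper's route does not need. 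What your version buys is an explicit rate and the identification of the limit as $\alpha$ itself in a single pass (hence, in particular, uniqueness of the enhancement), which the paper recovers only afterwards by a separate pointwise argument identifying the limit with the Riemann--Stieltjes integral $2^{-1}\int_{s}^{t}\left[\gamma\left(u\right)-\gamma\left(s\right),d\gamma\left(u\right)\right]$.
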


The proof is given in page \pageref{Proof of Theorem condition to be
enhancible}.

In Thm 8.22 \cite{P. Friz}, the authors proved that, when $\mathcal{V=%
%TCIMACRO{\U{211d} }%
%BeginExpansion
\mathbb{R}
%EndExpansion
}^{d}$, if $\left( \gamma ,\alpha \right) $ is a geometric $2$-rough path,
then there exists a sequence of continuous bounded variation paths $\left\{
\gamma _{n}\right\} _{n}$, s.t. $\left( \gamma _{n},A\left( \gamma
_{n}\right) \right) $ converge to $\left( \gamma ,\alpha \right) $ in $2$%
-rough norm $\left\Vert \cdot \right\Vert _{G^{\left( 2\right) }}$. However,
their construction of $\left\{ \gamma _{n}\right\} $ depends on $\alpha $
(i.e. Chow-Rashevskii connectivity theorem), while not $\gamma ^{D}$ in
general.

For any $0\leq s\leq t\leq T$ and any finite partition $D$ of $\left[ s,t%
\right] $, the Riemann sums of $2^{-1}\int_{s}^{t}\left[ \gamma \left(
u\right) -\gamma \left( s\right) ,d\gamma \left( u\right) \right] $ w.r.t. $%
D\subset \left[ s,t\right] $ is%
\begin{eqnarray*}
&&2^{-1}\sum_{k,t_{k}\in D}\frac{1}{2}\left[ \gamma \left( t_{k}\right)
+\gamma \left( t_{k+1}\right) ,\gamma \left( t_{k+1}\right) -\gamma \left(
t_{k}\right) \right] -2^{-1}\left[ \gamma \left( s\right) ,\gamma \left(
t\right) \right] \\
&=&2^{-1}\sum_{k,t_{k}\in D}\left[ \gamma \left( t_{k}\right) ,\gamma \left(
t_{k+1}\right) \right] -2^{-1}\left[ \gamma \left( s\right) ,\gamma \left(
t\right) \right] .
\end{eqnarray*}%
On the other hand, direct computation gives us 
\begin{equation*}
A\left( \gamma ^{D}\right) \left( s,t\right) =2^{-1}\sum_{k,t_{k}\in
D\subset \left[ s,t\right] }\left[ \gamma \left( t_{k}\right) ,\gamma \left(
t_{k+1}\right) \right] -2^{-1}\left[ \gamma \left( s\right) ,\gamma \left(
t\right) \right] .
\end{equation*}%
Thus, the Riemann-Stieltjes integral $2^{-1}\int_{s}^{t}\left[ \gamma \left(
u\right) -\gamma \left( s\right) ,d\gamma \left( u\right) \right] $ is the
pointwise limit of $A\left( \gamma ^{D}\right) $ as $\left\vert D\right\vert
\rightarrow 0$. Hence, if $\gamma $ is in $\mathcal{G}_{2}\left( \mathcal{V}%
\right) $, then $A\left( \gamma ^{D}\right) $ converge in $1$-variation
(Theorem \ref{Theorem condition to be enhancible}), so converge pointwisely,
to $2^{-1}\int_{s}^{t}\left[ \gamma \left( u\right) -\gamma \left( s\right)
,d\gamma \left( u\right) \right] $.

Therefore, the Riemann-Stieltjes integral $2^{-1}\int_{s}^{t}\left[ \gamma
\left( u\right) -\gamma \left( s\right) ,d\gamma \left( u\right) \right] $
is the only possible candidate to enhance $\gamma $: If the integral does
not exist, or $(\gamma ,$ $2^{-1}\int_{s}^{t}\left[ \gamma \left( u\right)
-\gamma \left( s\right) ,d\gamma \left( u\right) \right] )$ is not a
geometric $2$-rough path, then $\gamma $ can not be enhanced into a
geometric $2$-rough path.

While when $p>2$, the convergence of $A\left( f^{D}\right) $ as $\left\vert
D\right\vert \rightarrow 0$ is not necessary to enhance a path in $%
C^{0,p-var}\left( \left[ 0,T\right] ,\mathcal{V}\right) $. Our path $f$ at $%
\left( \ref{Definition of f}\right) $ is in $C^{0,2-var}\left( \left[ 0,T%
\right] ,\mathcal{V}\right) \subset C^{2-var}\left( \left[ 0,T\right] ,%
\mathcal{V}\right) \subset \cap _{p>2}C^{0,p-var}\left( \left[ 0,T\right] ,%
\mathcal{V}\right) $. Based on \cite{T.J.Lyons and Victoir}, finite $p$%
-variation paths can be enhanced into a geometric $q$-rough path for any $%
q>p $, so $f\ $can be enhanced into a geometric $p$-rough path for any $p>2$%
. While $\sup_{D\subset \left[ 0,1\right] }A\left( f^{D}\right) \left(
0,1\right) $ is not bounded, so $A\left( f^{D}\right) $ do not converge in $%
p $-variation, for any $p\in \left[ 1,\infty \right] $.

\medskip

Similar to Theorem \ref{Theorem condition to be enhancible}, we proved that:

\begin{theorem}
\label{Theorem condition to be enhancible into a 2-rough path}Suppose $%
\gamma \in C^{2-var}\left( \left[ 0,T\right] ,\mathcal{V}\right) $. Then $%
\gamma $ can be enhanced into a weak geometric $2$-rough path if and only if%
\begin{equation*}
\sup_{D}\left\Vert A\left( \gamma ^{D}\right) \right\Vert _{1-var,\left[ 0,T%
\right] }<\infty \text{ and }\left\{ A\left( \gamma ^{D}\right) \right\} _{D}%
\text{ are equicontinuous.}
\end{equation*}
\end{theorem}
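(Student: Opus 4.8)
The plan is to build everything on an exact identity relating $A\left( \gamma ^{D}\right) $ to any candidate enhancement $\alpha $. If $s,t\in D$ and $D\cap \left[ s,t\right] =\left\{ s=r_{0}<\cdots <r_{m}=t\right\} $, then iterating the multiplicativity $\left( \ref{property of multiplicativity}\right) $ of $\alpha $ and comparing with the area of the piecewise linear path shows that both $\alpha \left( s,t\right) -\sum_{i}\alpha \left( r_{i},r_{i+1}\right) $ and $A\left( \gamma ^{D}\right) \left( s,t\right) $ equal $\frac{1}{2}\sum_{i<j}\left[ \gamma \left( r_{i+1}\right) -\gamma \left( r_{i}\right) ,\gamma \left( r_{j+1}\right) -\gamma \left( r_{j}\right) \right] $, so
\begin{equation*}
A\left( \gamma ^{D}\right) \left( s,t\right) =\alpha \left( s,t\right) -\sum_{i=0}^{m-1}\alpha \left( r_{i},r_{i+1}\right) .
\end{equation*}
For general $s,t$ lying strictly inside $D$-segments, the same splitting (using multiplicativity of the genuine area $A\left( \gamma ^{D}\right) $ at the two nearest interior $D$-points, on whose segments $\gamma ^{D}$ is linear and contributes no area) produces the $\alpha $-increment over those interior points together with exactly two boundary cross terms of the form $\frac{1}{2}\left[ \gamma ^{D}\left( \cdot \right) -\gamma ^{D}\left( \cdot \right) ,\gamma ^{D}\left( \cdot \right) -\gamma ^{D}\left( \cdot \right) \right] $.

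For necessity I would start from a weak geometric $2$-rough path $\left( \gamma ,\alpha \right) $ and apply this decomposition on each interval of an arbitrary partition $P$ of $\left[ 0,T\right] $. The $\alpha $-increments telescope into disjoint subintervals, summing to at most $2\left\Vert \alpha \right\Vert _{1-var}$, while the two boundary cross terms per interval are controlled, via $\left\Vert u\otimes v\right\Vert \leq \left\Vert u\right\Vert \left\Vert v\right\Vert $ and Cauchy--Schwarz, by a constant multiple of $\left\Vert \gamma ^{D}\right\Vert _{2-var}^{2}\leq \left\Vert \gamma \right\Vert _{2-var}^{2}$ (this is where $\gamma \in C^{2-var}$ is used). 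Taking the supremum over $P$ gives $\sup_{D}\left\Vert A\left( \gamma ^{D}\right) \right\Vert _{1-var}\leq 2\left\Vert \alpha \right\Vert _{1-var}+C\left\Vert \gamma \right\Vert _{2-var}^{2}<\infty $, uniformly in $D$. For equicontinuity I would write $A\left( \gamma ^{D}\right) =\alpha -\beta ^{D}$ up to boundary terms, where $\beta ^{D}\left( s,t\right) =\sum_{\left[ r_{i},r_{i+1}\right] \subseteq \left[ s,t\right] }\alpha \left( r_{i},r_{i+1}\right) $. Since $\alpha $ is continuous and of finite $1$-variation, its variation function $V\left( t\right) :=\left\Vert \alpha \right\Vert _{1-var,\left[ 0,t\right] }$ is continuous, hence uniformly continuous on a compact interval; this bounds the oscillation of $\beta ^{D}$ by $V$-increments independently of $D$, and, combined with the uniform continuity of the fixed maps $\alpha $ and $\gamma $ entering the boundary terms, yields equicontinuity of $\left\{ A\left( \gamma ^{D}\right) \right\} _{D}$.

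For sufficiency I would assume the two conditions. The $1$-variation bound forces $\left\Vert A\left( \gamma ^{D}\right) \left( s,t\right) \right\Vert \leq \sup_{D}\left\Vert A\left( \gamma ^{D}\right) \right\Vert _{1-var}=:M$ for all $D$ and $\left( s,t\right) $, so the family is uniformly bounded; with equicontinuity, Arzel\`{a}--Ascoli produces a nested dyadic sequence $D_{n}$ with $\left\vert D_{n}\right\vert \rightarrow 0$ along which $A\left( \gamma ^{D_{n}}\right) $ converges uniformly on $\bigtriangleup _{\left[ 0,T\right] }$ to some $\alpha \in C\left( \bigtriangleup _{\left[ 0,T\right] },\left[ \mathcal{V},\mathcal{V}\right] \right) $ with $\alpha \left( t,t\right) =0$. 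For dyadic $s\leq u\leq t$ (eventually in every $D_{n}$), $\gamma ^{D_{n}}$ equals $\gamma $ at these points, so the multiplicativity of the genuine area $A\left( \gamma ^{D_{n}}\right) $ reads $A\left( \gamma ^{D_{n}}\right) \left( s,t\right) =A\left( \gamma ^{D_{n}}\right) \left( s,u\right) +A\left( \gamma ^{D_{n}}\right) \left( u,t\right) +\frac{1}{2}\left[ \gamma \left( u\right) -\gamma \left( s\right) ,\gamma \left( t\right) -\gamma \left( u\right) \right] $; letting $n\rightarrow \infty $ gives $\left( \ref{property of multiplicativity}\right) $ for $\alpha $ on a dense set, and continuity of $\alpha $ and $\gamma $ extends it to all $s\leq u\leq t$. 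Finally $\left\Vert \alpha \right\Vert _{1-var}\leq \liminf_{n}\left\Vert A\left( \gamma ^{D_{n}}\right) \right\Vert _{1-var}\leq M$ by lower semicontinuity of $1$-variation under pointwise convergence, and $\left\Vert \gamma \right\Vert _{2-var}<\infty $ is given, so $\left\Vert \left( \gamma ,\alpha \right) \right\Vert _{G^{\left( 2\right) }}<\infty $ and $\left( \gamma ,\alpha \right) $ is a weak geometric $2$-rough path.

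The hardest point is the Arzel\`{a}--Ascoli extraction: for a general Banach target $\left[ \mathcal{V},\mathcal{V}\right] $ one needs pointwise relative compactness of $\left\{ A\left( \gamma ^{D}\right) \left( s,t\right) \right\} _{D}$, not merely boundedness, so I would either restrict to finite-dimensional $\mathcal{V}$ or verify that these values remain in a fixed compact set before passing to the uniform limit. The remaining care is bookkeeping: tracking the two boundary cross terms in the necessity estimate and invoking continuity of the variation function of $\alpha $ for the equicontinuity half.
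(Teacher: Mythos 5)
Your proposal is correct and follows essentially the same route as the paper: the identity $A(\gamma^{D})(s,t)=\alpha(s,t)-\sum_{i}\alpha(r_{i},r_{i+1})$ is the paper's Lemma \ref{Lemma relations path and area}, the necessity half is the paper's bound $\Vert A(\gamma^{D})(s,t)\Vert \leq 2\Vert \alpha \Vert _{1-var,[s,t]}$ (your explicit boundary cross terms with a Cauchy--Schwarz correction are a minor variant), and the sufficiency half is the same Arzel\`{a}--Ascoli extraction, passage of multiplicativity to the limit, and lower semicontinuity of $1$-variation. The one caveat you flag --- that Arzel\`{a}--Ascoli for maps into a general Banach space $\left[ \mathcal{V},\mathcal{V}\right]$ needs pointwise relative compactness, not mere boundedness --- is a genuine point that the paper's own proof also leaves unaddressed.
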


The proof is given at page \pageref{Proof of Theorem condition to be
enhancible into a 2-rough path}.

\begin{problem}
\label{Problem not compose a space}Is $\mathcal{G}_{2}\left( \mathcal{V}%
\right) $ a linear space?
\end{problem}

$\mathcal{G}_{2}\left( \mathcal{V}\right) $ is linear when $\dim \left( 
\mathcal{V}\right) =1$; is not linear when $\dim \left( \mathcal{V}\right)
\geq 2$.

Based on $\left( \ref{G2 equals to C0-2var when dim(V)=1}\right) $ we got at
the end of Problem \ref{Problem existence of Reimann Stieltjes integral},
when $\dim \left( \mathcal{V}\right) =1$, $\mathcal{G}_{2}\left( \mathcal{V}%
\right) =C^{0,2-var}\left( \left[ 0,T\right] ,\mathcal{V}\right) $ thus a
space. When $\dim \left( \mathcal{V}\right) \geq 2$, based on the reasoning
in Problem \ref{Problem non-existence of enhancement}, $\mathcal{G}%
_{2}\left( \mathcal{V}\right) $ is not a space.

\medskip

The non-linearity of $\mathcal{G}_{2}\left( \mathcal{V}\right) $ is
inherited from the non-linearity of the area operator.

\begin{proposition}
When $\dim \left( \mathcal{V}\right) \geq 2$, both $\mathcal{G}_{2}\left( 
\mathcal{V}\right) $ and $C^{0,2-var}\left( \left[ 0,T\right] ,\mathcal{V}%
\right) /\mathcal{G}_{2}\left( \mathcal{V}\right) $ are dense in $%
C^{0,2-var}\left( \left[ 0,T\right] ,\mathcal{V}\right) $ under $2$%
-variation norm.
\end{proposition}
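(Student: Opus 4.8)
The plan is to handle the two density claims separately; the first is routine, and the second carries the real content. For $\mathcal{G}_{2}(\mathcal{V})$, I would simply recall that $C^{1-var}([0,T],\mathcal{V})\subseteq\mathcal{G}_{2}(\mathcal{V})$, and that $C^{1-var}$ is already dense under $2$-variation: by Wiener's characterization $\left( \ref{equivalent relation for vanishing 2-var path}\right) $, for every $\gamma\in C^{0,2-var}([0,T],\mathcal{V})$ the piecewise linear interpolants satisfy $\Vert\gamma^{D}-\gamma\Vert_{2-var}\to0$ as $\vert D\vert\to0$, and each $\gamma^{D}\in C^{1-var}\subseteq\mathcal{G}_{2}(\mathcal{V})$. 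Hence $\mathcal{G}_{2}(\mathcal{V})$ is dense.

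For the complement $C^{0,2-var}([0,T],\mathcal{V})\setminus\mathcal{G}_{2}(\mathcal{V})$ (which is what the quotient notation must mean, since $\mathcal{G}_{2}$ is not linear), fix $\gamma\in C^{0,2-var}$ and $\varepsilon>0$. First I would regularize: choose $D_{0}$ with $\Vert\gamma^{D_{0}}-\gamma\Vert_{2-var}<\varepsilon/2$ and set $\beta:=\gamma^{D_{0}}$, a piecewise linear, hence absolutely continuous, path. Then I would add a small non-enhancible perturbation: with $\widetilde{f}=(\func{Re}f)e_{1}+(\func{Im}f)e_{2}\in C^{0,2-var}\setminus\mathcal{G}_{2}(\mathcal{V})$ the path already constructed in Problem \ref{Problem non-existence of enhancement}, pick $\delta>0$ with $\delta\Vert\widetilde{f}\Vert_{2-var}<\varepsilon/2$ and put $\eta:=\beta+\delta\widetilde{f}$. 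Since $C^{0,2-var}$ is linear, $\eta\in C^{0,2-var}$ and $\Vert\eta-\gamma\Vert_{2-var}<\varepsilon$, so the whole task reduces to showing $\eta\notin\mathcal{G}_{2}(\mathcal{V})$.

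To establish this I would invoke Theorem \ref{Theorem condition to be enhancible}: $\eta\in\mathcal{G}_{2}$ iff $A(\eta^{D})$ converges in $1$-variation. Using $\eta^{D}=\beta^{D}+\delta\widetilde{f}^{D}$ and the bilinearity of $A(\cdot,\cdot)$ (Definition \ref{Definition area of two paths}),
\begin{equation*}
A(\eta^{D})=A(\beta^{D})+\delta^{2}A(\widetilde{f}^{D})+\delta\big(A(\beta^{D},\widetilde{f}^{D})+A(\widetilde{f}^{D},\beta^{D})\big).
\end{equation*}
As $\vert D\vert\to0$ the term $A(\beta^{D})$ converges in $1$-variation, since $\beta\in C^{1-var}\subseteq\mathcal{G}_{2}$; whereas $\delta^{2}A(\widetilde{f}^{D})$ fails to converge even pointwise at $(0,1)$, because by $\left( \ref{inner expression divergence2}\right) $ there are partition sequences along which $A(\widetilde{f}^{D})(0,1)$ tends to distinct limits. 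Thus, provided the cross term converges, $A(\eta^{D})$ cannot converge, and $\eta\notin\mathcal{G}_{2}(\mathcal{V})$.

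The hard part will be controlling the cross term $A(\beta^{D},\widetilde{f}^{D})+A(\widetilde{f}^{D},\beta^{D})$, and this is precisely why the bounded-variation regularization is needed: $\beta$ has finite $1$-variation and $\widetilde{f}$ finite $2$-variation, so with $1^{-1}+2^{-1}=\tfrac{3}{2}>1$ we sit in the classical sub-critical Young regime rather than the critical case $p^{-1}+q^{-1}=1$. I would then invoke continuity of the Young integral: since $\beta^{D}\to\beta$ in $1$-variation (as $\beta$ is absolutely continuous) and $\widetilde{f}^{D}\to\widetilde{f}$ in $2$-variation (Wiener's characterization), the Young area estimate $\left( \ref{Reimann area inequality}\right) $ yields $A(\beta^{D},\widetilde{f}^{D})\to A(\beta,\widetilde{f})$ and $A(\widetilde{f}^{D},\beta^{D})\to A(\widetilde{f},\beta)$ with limits of finite $\tfrac{2}{3}$-variation, hence convergence in $1$-variation. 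Combining the three contributions shows $A(\eta^{D})$ does not converge in $1$-variation, so $\eta\notin\mathcal{G}_{2}(\mathcal{V})$, which gives density of the complement. The single delicate point to verify carefully is the stability of the Young cross-area under the simultaneous approximations $\beta^{D}\to\beta$ and $\widetilde{f}^{D}\to\widetilde{f}$; the rest is bookkeeping.
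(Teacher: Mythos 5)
Your first half (density of $\mathcal{G}_{2}$) is exactly the paper's argument. For the density of the complement your construction is correct but genuinely different from the paper's. The paper performs a time-localized surgery: it takes the tails $f_{N}$ of the explicit counterexample, uses the uniform estimate $\sup_{N}l_{N}^{1/2}\Vert f_{N}\Vert _{2-var}<\infty$ from Lemma \ref{Lemma, decay of 2-var} together with the fact that $f-f_{N}$ is smooth (so each tail still has $\sup_{D}|A((f_{N})^{D})(0,1)|=\infty$), rescales one such tail into a tiny initial interval $[0,2^{-2(K+1)}]$ on which $\gamma $ has small $2$-variation, and glues it to $\gamma $ on $[2^{-2K},T]$ by a linear bridge; non-enhancibility then follows because $\sup_{D}\Vert A(\widetilde{\gamma }^{D})\Vert _{1-var}=\infty $. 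You instead regularize $\gamma $ to a piecewise linear $\beta $ and add a globally small multiple $\delta \widetilde{f}$ of the fixed counterexample, then isolate the divergent term $\delta ^{2}A(\widetilde{f}^{D})$ via bilinearity of the area and sub-critical Young estimates for the cross terms $A(\beta ^{D},\widetilde{f}^{D})+A(\widetilde{f}^{D},\beta ^{D})$ (with $\beta \in C^{1-var}$ one takes any $p\in (1,2)$, $q=2$, so $p^{-1}+q^{-1}>1$; your ``$\tfrac{2}{3}$-variation'' is the formal $p=1$ endpoint, but any $(p^{-1}+2^{-1})^{-1}<1$ suffices). This is sound, and the delicate point you flag — Cauchy-in-$1$-variation of the cross areas under $\beta ^{D}\rightarrow \beta $ and $\widetilde{f}^{D}\rightarrow \widetilde{f}$ — is exactly the mechanism the paper itself deploys later to prove $\mathcal{G}_{2}(\mathcal{V})+\cup _{1\leq p<2}C^{p-var}=\mathcal{G}_{2}(\mathcal{V})$; indeed your argument is essentially the contrapositive of that Proposition applied to $\eta =\beta +\delta \widetilde{f}$, and could be shortened by citing it. What each approach buys: yours is more modular and avoids the rescaling, gluing and tail estimates entirely, at the price of first replacing $\gamma $ by a piecewise linear approximant and of relying on the Young cross-term machinery; the paper's surgery needs no bilinearity argument and perturbs $\gamma $ only on an arbitrarily short initial time interval, but requires the quantitative decay of the tails of $f$.
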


\begin{proof}
$\mathcal{G}_{2}\left( \mathcal{V}\right) $ is dense in $C^{0,2-var}\left( %
\left[ 0,T\right] ,\mathcal{V}\right) $, because (based on $\left( \ref%
{equivalent relation for vanishing 2-var path}\right) $) 
\begin{equation*}
C^{1-var}\left( \left[ 0,T\right] ,\mathcal{V}\right) \subseteq \mathcal{G}%
_{2}\left( \mathcal{V}\right) \subseteq C^{0,2-var}\left( \left[ 0,T\right] ,%
\mathcal{V}\right) =:\overline{C^{1-var}\left( \left[ 0,T\right] ,\mathcal{V}%
\right) }^{2-var}.
\end{equation*}

On the other hand, when $\dim \left( \mathcal{V}\right) \geq 2$, suppose $%
\gamma \in C^{0,2-var}\left( \left[ 0,T\right] ,\mathcal{V}\right) $. We
want to find a non-enhancible path $\widetilde{\gamma }$ in the $2$%
-variation neighborhood of $\gamma $. Based on the definition of $f$ at $%
\left( \ref{Definition of f}\right) $, define%
\begin{equation*}
f_{N}\left( t\right) :=\sum_{n=N}^{\infty }\sum_{k=l_{n}}^{l_{n+1}-1}\frac{1%
}{k^{\frac{1}{2}}2^{k}}\exp \left( 2\pi i\left( -1\right) ^{n}2^{2k}t\right)
,t\in \left[ 0,1\right] \text{.}
\end{equation*}%
Then (based on Lemma \ref{Lemma, decay of 2-var} below, which is used in the
proof of the non-enhancibility of $f$), $\sup_{N}l_{N}^{\frac{1}{2}%
}\left\Vert f_{N}\right\Vert _{2-var}:=C<\infty $. On the other hand, for
each fixed $N$, $\sup_{D}\left\vert A(\left( f_{N}\right) ^{D})\left(
0,1\right) \right\vert =\infty $ (because $\sup_{D}\left\vert A\left(
f^{D}\right) \left( 0,1\right) \right\vert =\infty $ and $f-f_{N}$ is
smooth). Select $e_{1}$, $e_{2}\in \mathcal{V}$, s.t. $\left[ e_{1},e_{2}%
\right] \neq 0$. For any $\epsilon >0$, choose integer $K$, s.t. $2^{-2K}<T$%
, $\left\Vert \gamma \right\Vert _{2-var,[0,2^{-2K}]}<\epsilon $ and $\left(
\left\Vert e_{1}\right\Vert +\left\Vert e_{2}\right\Vert \right) (Cl_{K+1}^{-%
\frac{1}{2}})<\epsilon $. Define $g\in C^{0,2-var}\left( \left[ 0,1\right] ,%
\mathcal{V}\right) $ by%
\begin{equation*}
g\left( t\right) =\left( \func{Re}\left( f_{K+1}\left( t\right)
-f_{K+1}\left( 1\right) \right) \right) e_{1}+\left( \func{Im}\left(
f_{K+1}\left( t\right) -f_{K+1}\left( 1\right) \right) \right) e_{2},t\in %
\left[ 0,1\right] .
\end{equation*}%
Then $g\left( 1\right) =0$ and 
\begin{eqnarray*}
\left\Vert g\right\Vert _{2-var,\left[ 0,1\right] } &\leq &\left( \left\Vert
e_{1}\right\Vert +\left\Vert e_{2}\right\Vert \right) \left\Vert
f_{K+1}\right\Vert _{2-var,\left[ 0,1\right] } \\
&\leq &\left( \left\Vert e_{1}\right\Vert +\left\Vert e_{2}\right\Vert
\right) (Cl_{K+1}^{-\frac{1}{2}})<\epsilon , \\
\sup_{D}\left\Vert A(g^{D})\left( 0,1\right) \right\Vert
&=&\sup_{D}\left\vert A(\left( f_{K+1}\right) ^{D})\left( 0,1\right)
\right\vert \left\Vert \left[ e_{1},e_{2}\right] \right\Vert =\infty .
\end{eqnarray*}%
Define%
\begin{equation*}
\widetilde{\gamma }\left( t\right) =\left\{ 
\begin{array}{cc}
g\left( 2^{2\left( K+1\right) }t\right) +\gamma (\frac{1}{2^{2\left(
K+1\right) }}), & t\in \lbrack 0,\frac{1}{2^{2\left( K+1\right) }}] \\ 
\text{linear,} & t\in \lbrack \frac{1}{2^{2\left( K+1\right) }},\frac{1}{%
2^{2K}}] \\ 
\gamma \left( t\right) , & t\in \lbrack \frac{1}{2^{2K}},T]%
\end{array}%
\right. .
\end{equation*}%
Then $\widetilde{\gamma }$ is continuous and%
\begin{equation*}
\left\Vert \gamma -\widetilde{\gamma }\right\Vert _{2-var}\leq 2\left\Vert
\gamma \right\Vert _{2-var,[0,2^{-2K}]}+\left\Vert g\right\Vert _{2-var,%
\left[ 0,1\right] }<3\epsilon .
\end{equation*}%
On the other hand, 
\begin{gather*}
\sup_{D\subset \lbrack 0,1]}\left\Vert A(\widetilde{\gamma }^{D})\right\Vert
_{1-var}\geq \sup_{D\subset \lbrack 0,1]}\left\Vert A(\widetilde{\gamma }%
^{D})\right\Vert _{\infty -var} \\
\geq \sup_{D\subset \lbrack 0,\frac{1}{2^{2\left( K+1\right) }}]}\left\Vert
A(\widetilde{\gamma }^{D})(0,\frac{1}{2^{2\left( K+1\right) }})\right\Vert
=\sup_{D\subset \left[ 0,1\right] }\left\Vert A(g^{D})\left( 0,1\right)
\right\Vert =\infty .
\end{gather*}%
Thus $A(\widetilde{\gamma }^{D})$ do not converge in $1$-variation as $%
\left\vert D\right\vert \rightarrow 0$, and based on Theorem \ref{Theorem
condition to be enhancible}, $\widetilde{\gamma }$ is not enhancible.
\end{proof}

When $\gamma $ is a path of finite $p$-variation, $p\in \lbrack 1,2)$, based
on Young integral and Theorem \ref{Theorem condition to be enhancible}, the
enhancement of $\gamma $ to geometric $2$-rough path exists uniquely in the
form of Riemann-Stieltjes integral. Thus $\cup _{1\leq p<2}C^{p-var}\left( %
\left[ 0,T\right] ,\mathcal{V}\right) \subseteq \mathcal{G}_{2}\left( 
\mathcal{V}\right) $.

\begin{problem}
Is the inclusion $\cup _{1\leq p<2}C^{p-var}\left( \left[ 0,T\right] ,%
\mathcal{V}\right) \subseteq \mathcal{G}_{2}\left( \mathcal{V}\right) $
strict?
\end{problem}

Yes, it is. When $\dim \left( \mathcal{V}\right) =1$, $\mathcal{G}_{2}\left( 
\mathcal{V}\right) =C^{0,2-var}\left( \left[ 0,1\right] ,\mathcal{V}\right) $
(based on $\ \left( \ref{G2 equals to C0-2var when dim(V)=1}\right) $).
Select $e\in \mathcal{V}$, $e\neq 0$, and define $h\left( t\right) =\left(
t^{\frac{1}{2}}\cos ^{2}\left( \frac{\pi }{t}\right) /\ln t\right) e$, $t\in %
\left[ 0,1\right] $. Then 
\begin{equation*}
h\in C^{0,2-var}\left( \left[ 0,T\right] ,\mathcal{V}\right) \backslash \cup
_{1\leq p<2}C^{p-var}\left( \left[ 0,T\right] ,\mathcal{V}\right) \text{ \
(Exer5.35\cite{P. Friz}).}
\end{equation*}
When $\dim \left( \mathcal{V}\right) \geq 2$, the inclusion is strict
because $\cup _{1\leq p<2}C^{p-var}\left( \left[ 0,T\right] ,\mathcal{V}%
\right) $ is a space, but $\mathcal{G}_{2}\left( \mathcal{V}\right) $ is not
(Problem \ref{Problem not compose a space}).

\medskip

Although $\mathcal{G}_{2}$ is not a space, it can be shifted in any of the
"Young" direction.

\begin{proposition}
$\mathcal{G}_{2}\left( \mathcal{V}\right) +\cup _{1\leq p<2}C^{p-var}\left( %
\left[ 0,T\right] ,\mathcal{V}\right) =\mathcal{G}_{2}\left( \mathcal{V}%
\right) $.
\end{proposition}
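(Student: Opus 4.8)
The plan is to prove the two inclusions separately. The inclusion $\mathcal{G}_{2}\left( \mathcal{V}\right) \subseteq \mathcal{G}_{2}\left( \mathcal{V}\right) +\cup _{1\leq p<2}C^{p-var}\left( \left[ 0,T\right] ,\mathcal{V}\right) $ is immediate, since the zero path lies in $C^{1-var}\left( \left[ 0,T\right] ,\mathcal{V}\right) $. For the reverse inclusion I fix $\gamma \in \mathcal{G}_{2}\left( \mathcal{V}\right) $ and $\eta \in C^{p_{\eta }-var}\left( \left[ 0,T\right] ,\mathcal{V}\right) $ with $1\leq p_{\eta }<2$, and aim to show $\gamma +\eta \in \mathcal{G}_{2}\left( \mathcal{V}\right) $ via Theorem \ref{Theorem condition to be enhancible}. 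First note $\gamma +\eta \in C^{0,2-var}\left( \left[ 0,T\right] ,\mathcal{V}\right) $: the class $C^{0,2-var}$ is closed under addition by Minkowski's inequality applied to $\omega _{2}$, and $\eta \in C^{0,2-var}$ because any continuous path of finite $p_{\eta }$-variation has vanishing $q$-variation for every $q>p_{\eta }$ (since $\sum_{j}\Vert \eta (t_{j+1})-\eta (t_{j})\Vert ^{q}\leq (\max_{j}\Vert \eta (t_{j+1})-\eta (t_{j})\Vert )^{q-p_{\eta }}\Vert \eta \Vert _{p_{\eta }-var}^{p_{\eta }}$ and the maximal increment tends to $0$ with the mesh), in particular for some $q\in (p_{\eta },2)$, hence also vanishing $2$-variation. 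Thus it suffices to show $A\left( (\gamma +\eta )^{D}\right) $ converges in $1$-variation as $\left\vert D\right\vert \rightarrow 0$.

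Because piecewise-linear interpolation is linear, $(\gamma +\eta )^{D}=\gamma ^{D}+\eta ^{D}$, and since the area map $A(\cdot ,\cdot )$ is bilinear with $A(\zeta )=A(\zeta ,\zeta )$, I expand
\[
A\left( (\gamma +\eta )^{D}\right) =A\left( \gamma ^{D}\right) +A\left( \eta ^{D}\right) +A\left( \gamma ^{D},\eta ^{D}\right) +A\left( \eta ^{D},\gamma ^{D}\right) .
\]
The first summand converges in $1$-variation by Theorem \ref{Theorem condition to be enhancible}, as $\gamma \in \mathcal{G}_{2}\left( \mathcal{V}\right) $; the second converges in $1$-variation by the same theorem, since $\eta \in \cup _{1\leq p<2}C^{p-var}\subseteq \mathcal{G}_{2}\left( \mathcal{V}\right) $. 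The heart of the matter is the two cross terms, which are Young, not Riemann--Stieltjes, integrals.

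The key device is to fix the auxiliary exponent $q\in (p_{\eta },2)$ chosen above. Then $\eta \in C^{0,q-var}$, so by Wiener's characterization $\left( \ref{equivalent relation for vanishing 2-var path}\right) $, $\Vert \eta ^{D}-\eta \Vert _{q-var}\rightarrow 0$; likewise $\Vert \gamma ^{D}-\gamma \Vert _{2-var}\rightarrow 0$, and consequently $\Vert \gamma ^{D}\Vert _{2-var}$ stays bounded once $\left\vert D\right\vert $ is small. Writing, by bilinearity, $A(\gamma ^{D},\eta ^{D})-A(\gamma ,\eta )=A(\gamma ^{D},\eta ^{D}-\eta )+A(\gamma ^{D}-\gamma ,\eta )$ and applying the Young estimate $\left( \ref{Reimann area inequality}\right) $ with the exponent pair $(2,q)$ --- legitimate because $\tfrac{1}{2}+\tfrac{1}{q}>1$ --- I bound the two pieces in $(\tfrac{1}{2}+\tfrac{1}{q})^{-1}$-variation by $C_{2,q}\Vert \gamma ^{D}\Vert _{2-var}\Vert \eta ^{D}-\eta \Vert _{q-var}$ and $C_{2,q}\Vert \gamma ^{D}-\gamma \Vert _{2-var}\Vert \eta \Vert _{q-var}$, both of which tend to $0$ (note $\Vert \eta \Vert _{q-var}\leq \Vert \eta \Vert _{p_{\eta }-var}<\infty $). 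Hence $A(\gamma ^{D},\eta ^{D})$ converges in $(\tfrac{1}{2}+\tfrac{1}{q})^{-1}$-variation, and since $(\tfrac{1}{2}+\tfrac{1}{q})^{-1}<1$ and $p\mapsto \Vert \cdot \Vert _{p-var}$ is non-increasing, it converges in $1$-variation; the term $A(\eta ^{D},\gamma ^{D})$ is handled identically.

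Combining the four contributions, $A\left( (\gamma +\eta )^{D}\right) $ converges in $1$-variation, so Theorem \ref{Theorem condition to be enhancible} yields $\gamma +\eta \in \mathcal{G}_{2}\left( \mathcal{V}\right) $, which establishes the reverse inclusion and hence the claimed equality. I expect the cross-term analysis to be the main obstacle: one must reconcile the need for \emph{vanishing} variation (to force $\Vert \eta ^{D}-\eta \Vert \rightarrow 0$ through Wiener's theorem) with the Young regime $\tfrac{1}{2}+\tfrac{1}{q}>1$, and this is precisely why one slides the exponent from $p_{\eta }$ up to some $q\in (p_{\eta },2)$ rather than attempting to work at $p_{\eta }$ or at the borderline value $2$ directly.
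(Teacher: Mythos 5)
Your proposal is correct and follows essentially the same route as the paper: decompose $A\left( \left( \gamma +\eta \right) ^{D}\right) =\sum_{i,j}A\left( \gamma _{i}^{D},\gamma _{j}^{D}\right) $, handle the diagonal term coming from $\mathcal{G}_{2}\left( \mathcal{V}\right) $ via Theorem \ref{Theorem condition to be enhancible}, handle the remaining terms via the Young estimate $\left( \ref{Reimann area inequality}\right) $, and conclude with Theorem \ref{Theorem condition to be enhancible} again. Your write-up merely supplies more detail than the paper on why the cross terms converge (the auxiliary exponent $q\in \left( p_{\eta },2\right) $ and the splitting $A(\gamma ^{D},\eta ^{D}-\eta )+A(\gamma ^{D}-\gamma ,\eta )$), which the paper leaves implicit.
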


Suppose $\gamma _{1}\in \mathcal{G}_{2}\left( \mathcal{V}\right) $, then $%
\gamma _{1}$ is of finite $2$-variation. For any $\gamma _{2}$ of finite $p$%
-variation, $p\in \lbrack 1,2)$, according to Young integral (i.e.$\left( %
\ref{Reimann area inequality}\right) $), $A(\gamma _{1}^{D},\gamma _{2}^{D})$
converge in $\left( 2^{-1}+p^{-1}\right) ^{-1}$-variation as $\left\vert
D\right\vert \rightarrow 0$ ($p<2$, so converge in $1$-variation).
Similarly, $A(\gamma _{2}^{D},\gamma _{1}^{D})$ and $A(\gamma
_{2}^{D},\gamma _{2}^{D})$ converge in $1$-variation as $\left\vert
D\right\vert \rightarrow 0$. On the other hand, $\gamma _{1}\in \mathcal{G}%
_{2}\left( \mathcal{V}\right) $, so apply Theorem \ref{Theorem condition to
be enhancible}, $A(\gamma _{1}^{D}):=A\left( \gamma _{1}^{D},\gamma
_{1}^{D}\right) $ converge in $1$-variation. Therefore $A((\gamma
_{1}+\gamma _{2})^{D})=\sum_{i,j=1,2}A\left( \gamma _{i}^{D},\gamma
_{j}^{D}\right) $ converge in $1$-variation as $\left\vert D\right\vert
\rightarrow 0$ and $\gamma _{1}+\gamma _{2}$ is enhancible (Theorem \ref%
{Theorem condition to be enhancible}).

\medskip

In the way of exploring paths in $\mathcal{G}_{2}\left( \mathcal{V}\right) $%
, we get an extension to Young \cite{L. C. Young}.

\begin{theorem}
\label{Theorem generalized Young integral}Let $\mathcal{V}_{i}$, $i=1,2$, be
two Banach spaces and $\gamma _{i}:\left[ 0,1\right] \rightarrow \mathcal{V}%
_{i}$ be two continuous paths. If there exist $p>1$, $q>1$, $p^{-1}+q^{-1}=1$%
, and two non-decreasing functions $m_{i}:\left[ 0,1\right] \rightarrow 
\overline{%
%TCIMACRO{\U{211d} }%
%BeginExpansion
\mathbb{R}
%EndExpansion
^{+}}$, $i=1,2$, satisfying%
\begin{equation*}
\lim_{t\rightarrow 0}m_{i}\left( t\right) =0\text{, }m_{i}\left( 1\right)
\leq 1\text{, and }\int_{0}^{1}\frac{m_{1}\left( t\right) m_{2}\left(
t\right) }{t}dt<\infty ,
\end{equation*}%
such that 
\begin{equation}
\hspace{-0.02in}\hspace{-0.02in}\sup_{0\leq s<t\leq 1}\frac{\left\Vert
\gamma _{1}\left( t\right) -\gamma _{1}\left( s\right) \right\Vert }{%
\left\vert t-s\right\vert ^{\frac{1}{p}}m_{1}\left( t-s\right) }\hspace{%
-0.02in}:=\hspace{-0.02in}C_{1}<\hspace{-0.02in}\infty \text{,}\sup_{0\leq
s<t\leq 1}\frac{\left\Vert \gamma _{2}\left( t\right) -\gamma _{2}\left(
s\right) \right\Vert }{\left\vert t-s\right\vert ^{\frac{1}{q}}m_{2}\left(
t-s\right) }\hspace{-0.02in}:=\hspace{-0.02in}C_{2}\hspace{-0.02in}<\hspace{%
-0.02in}\infty .  \label{Condition on gamma}
\end{equation}%
Then the Riemann-Stieltjes integral $\int_{0}^{t}\gamma _{1}\left( t\right)
\otimes d\gamma _{2}\left( t\right) $, $t\in \left[ 0,1\right] $ exists, and 
\begin{equation*}
\left\Vert \int_{0}^{\cdot }\gamma _{1}\left( t\right) \otimes d\gamma
_{2}\left( t\right) \right\Vert _{q-var}\leq 8C_{1}C_{2}\left( 2+\int_{0}^{1}%
\frac{m_{1}\left( t\right) m_{2}\left( t\right) }{t}dt\right) .
\end{equation*}
\end{theorem}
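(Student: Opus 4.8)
The plan is to run Young's classical one-point-removal scheme for Riemann--Stieltjes sums, but to replace the subcritical gain $\sum_m m^{-(1/p+1/q)}<\infty$ (which fails here, since $1/p+1/q=1$ makes this the harmonic series) by the integrability hypothesis $\int_0^1 t^{-1}m_1(t)m_2(t)\,dt<\infty$. Write $\phi(t):=m_1(t)m_2(t)$; since each $m_i$ is non-decreasing with $m_i(1)\le 1$, we have $m_i\le 1$ and hence $\phi\le 1$ on $[0,1]$, with $\phi$ and $v\mapsto v\phi(v)$ both non-decreasing. We may normalise $\gamma_1(0)=0$, so that $\|\gamma_1(t)\|\le C_1 t^{1/p}m_1(t)\le C_1$ (this is what makes the stated constant independent of $\gamma_1$). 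For a partition $D=\{s=t_0<\dots<t_n=t\}$ of $[s,t]$ set $S(D):=\sum_j \gamma_1(t_j)\otimes(\gamma_2(t_{j+1})-\gamma_2(t_j))$, and let $R(s,t)$ be the (yet to be shown) limit of $S(D)-\gamma_1(s)\otimes(\gamma_2(t)-\gamma_2(s))$, so that the increment of the indefinite integral is $\gamma_1(s)\otimes(\gamma_2(t)-\gamma_2(s))+R(s,t)$.

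The core is the remainder estimate. Deleting an interior point $t_k$ from $D$ changes $S(D)$ by exactly $(\gamma_1(t_k)-\gamma_1(t_{k-1}))\otimes(\gamma_2(t_{k+1})-\gamma_2(t_k))$, whose norm — using $(\ref{Condition on gamma})$, submultiplicativity of $\|\cdot\|_{\mathcal V\otimes\mathcal U}$, monotonicity of the $m_i$, and the elementary bound $a^{1/p}b^{1/q}\le a+b$ (valid because $1/p+1/q=1$) — is at most $C_1C_2\,g\,\phi(g)$ with $g:=t_{k+1}-t_{k-1}$. I would then delete points greedily: among $m$ interior points the double gaps sum to at most $2(t-s)$, so some $t_k$ has $g\le 2(t-s)/m$; deleting the cheapest point at each stage and using monotonicity of $v\mapsto v\phi(v)$ yields
\[
\left\|S(D)-\gamma_1(s)\otimes(\gamma_2(t)-\gamma_2(s))\right\|\le C_1C_2\sum_{m=1}^{n-1}\frac{2(t-s)}{m}\,\phi\!\left(\frac{2(t-s)}{m}\right).
\]
The decisive step is the integral comparison: monotonicity of $v\mapsto v\phi(v)$ dominates this sum by $\int_1^\infty \frac{2(t-s)}{x}\phi(\frac{2(t-s)}{x})\,dx=2(t-s)\int_0^{2(t-s)}\frac{\phi(u)}{u}\,du$ (extending $\phi$ by $\phi(1)$ on $[1,2]$). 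This gives the control $\|R(s,t)\|\le C_1C_2\,\Phi(s,t)$ with $\Phi(s,t):=2(t-s)\big(\phi(2(t-s))+\int_0^{2(t-s)}\frac{\phi(u)}{u}\,du\big)$, and crucially $\Phi(s,t)\to 0$ as $t-s\to 0$ because the integral converges.

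Existence then follows by a Cauchy argument: for $|D|\le\delta$ and any refinement $D''$, the difference $S(D)-S(D'')$ is, interval by interval, a sum of refined remainders, so $\|S(D)-S(D'')\|\le C_1C_2\sum_j\Phi(t_j,t_{j+1})\le 2C_1C_2\big(\phi(2\delta)+\int_0^{2\delta}\frac{\phi(u)}{u}\,du\big)\to 0$ as $\delta\to 0$ (using $\sum_j(t_{j+1}-t_j)=1$); independence of the tags is handled identically via the $C_1C_2\phi(\delta)$ bound. Hence $i(t):=\int_0^t\gamma_1\otimes d\gamma_2$ exists with $i(t)-i(s)=\gamma_1(s)\otimes(\gamma_2(t)-\gamma_2(s))+R(s,t)$. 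For the $q$-variation I would split each increment over an arbitrary partition: the leading pieces give $\sum_j\|\gamma_1(t_j)\otimes(\gamma_2(t_{j+1})-\gamma_2(t_j))\|^q\le C_1^qC_2^q\sum_j(t_{j+1}-t_j)=C_1^qC_2^q$, while the remainder pieces satisfy $\|R(t_j,t_{j+1})\|\le C_1C_2\,\Phi(t_j,t_{j+1})\le C_1C_2(t_{j+1}-t_j)K$ with $K$ a finite multiple of $1+\int_0^1\frac{m_1 m_2}{t}\,dt$, so their $\ell^q$-norm is at most $C_1C_2 K$. The triangle inequality in $\ell^q$ then gives a bound of the form $C_1C_2(1+K)$, comfortably inside $8C_1C_2\big(2+\int_0^1\frac{m_1 m_2}{t}\,dt\big)$; only the numerical constants need tracking to reach the stated form.

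The main obstacle is the integral-comparison step together with the upgrade from an a priori bound to genuine convergence. In the subcritical regime one only needs $\sum_m m^{-\theta}<\infty$ with $\theta=1/p+1/q>1$; here $\theta=1$ and that sum diverges, so the whole content is that reweighting by $\phi(2(t-s)/m)$ and invoking $\int_0 u^{-1}\phi(u)\,du<\infty$ both restores summability and forces $\Phi(s,t)\to0$ as $t\to s$. This vanishing is what separates the present critical statement from a mere uniform estimate: as the paper's own $p=q=2$ examples show, Riemann sums can stay bounded yet fail to converge, so the delicate point is precisely verifying that the tail $\int_0^{2\delta}u^{-1}\phi(u)\,du\to0$ produces an honest Cauchy net rather than only a bounded one.
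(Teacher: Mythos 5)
Your proposal is correct, and it reaches the conclusion by a genuinely different route from the paper. You run Young's classical maximal-point-removal argument directly on the Riemann sums: delete the interior point whose double gap is smallest (at most $2(t-s)/m$ when $m$ interior points remain), bound each deletion cost by $C_{1}C_{2}\,g\,m_{1}(g)m_{2}(g)$, and then dominate the resulting harmonic-scale sum $\sum_{m}\frac{2(t-s)}{m}\phi\left(\frac{2(t-s)}{m}\right)$ by $2(t-s)\int_{0}^{2(t-s)}u^{-1}\phi(u)\,du$ via monotone comparison. The paper instead works with the iterated integrals $I(\gamma_{1}^{D},\gamma_{2}^{D})$ of piecewise-linear interpolations and proves they form a Cauchy net in $1$-variation, using a refinement-comparison lemma (Lemma \ref{Lemma difference of 1-var of area}) together with a recursive interval-bisection lemma (Lemma \ref{Lemma estimation of area bisecting interval}) that produces the dyadic sum $\sum_{n}m_{1}(\delta/2^{n})m_{2}(\delta/2^{n})$, which is then compared to $\int_{0}^{\delta}t^{-1}m_{1}(t)m_{2}(t)\,dt$. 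Both arguments hinge on exactly the same replacement of the subcritical summability $\sum_{m}m^{-(1/p+1/q)}<\infty$ by the integrability hypothesis, just discretised at harmonic versus dyadic scales. Your route is more elementary and closer to Young's original proof, and your per-interval remainder bound $\left\Vert R(s,t)\right\Vert \leq C_{1}C_{2}\Phi(s,t)$ with $\Phi(s,t)=O(t-s)$ immediately delivers both the existence and the $q$-variation (indeed even a $1$-variation bound on $I(\gamma_{1},\gamma_{2})$). What the paper's heavier machinery buys is convergence of $I(\gamma_{1}^{D},\gamma_{2}^{D})$ \emph{in $1$-variation norm}, which is the form reused later for the enhancibility results (Theorem \ref{Theorem larger space of enhancible paths} via Theorem \ref{Theorem condition to be enhancible}); your argument as written gives uniform convergence of the Riemann sums plus a pointwise remainder bound, which suffices for the present theorem but would need a small additional uniformisation over subpartitions to recover the $1$-variation Cauchy property. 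Two minor points you handled correctly but should make explicit in a final write-up: the normalisation $\gamma_{1}(0)=0$ (the stated $q$-variation bound is false without it, and the paper makes the same implicit assumption when bounding $\left\Vert \beta\right\Vert _{q-var}$ by $\left\Vert \gamma_{1}\right\Vert _{\infty -var}\left\Vert \gamma_{2}\right\Vert _{q-var}$), and the extension of $\phi$ to $[1,2]$ needed because the argument evaluates $\phi$ at $2(t-s)$.
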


Theorem \ref{Theorem generalized Young integral} is proved in page \pageref%
{Proof of theorem generalized Young integral}.

\begin{remark}
When $m_{1}\left( x\right) =x^{a}$, $m_{2}\left( x\right) =x^{b}$, $a>0$, $%
b>0$, we get Young integral \cite{L. C. Young}.
\end{remark}

\begin{remark}
In the proof of Theorem \ref{Theorem generalized Young integral}, we get an
estimation of the iterated integral of $\gamma _{1}$ and $\gamma _{2}$
(Definition \ref{Definition area of two paths}):%
\begin{equation*}
\left\Vert I\left( \gamma _{1},\gamma _{2}\right) \right\Vert _{1-var}\leq
C_{1}C_{2}\left( 15+8\int_{0}^{1}\frac{m_{1}\left( t\right) m_{2}\left(
t\right) }{t}dt\right) .
\end{equation*}
\end{remark}

On the other hand, $\int_{0}^{1}\frac{m_{1}\left( t\right) m_{2}\left(
t\right) }{t}dt<\infty $ is necessary in the sense of the following example.

\begin{example}
\label{Example integration finite is necessary}Suppose $m_{i}:\left[ 0,1%
\right] \rightarrow \overline{%
%TCIMACRO{\U{211d} }%
%BeginExpansion
\mathbb{R}
%EndExpansion
^{+}}$ are two non-decreasing functions, satisfying $\lim_{t\rightarrow
0}m_{i}\left( t\right) =0$, $\left\vert m_{i}\right\vert \leq 1$, $i=1,2$,
and $\int_{0}^{1}\frac{m_{1}\left( t\right) m_{2}\left( t\right) }{t}%
dt=\infty $. Then for any $p>1$, $q>1$, $p^{-1}+q^{-1}=1$, there exist two
continuous real-valued paths $\gamma _{i}:\left[ 0,1\right] \rightarrow 
%TCIMACRO{\U{211d} }%
%BeginExpansion
\mathbb{R}
%EndExpansion
$, $i=1,2$, s.t. 
\begin{equation*}
\sup_{0\leq s<t\leq 1}\frac{\left\vert \gamma _{1}\left( t\right) -\gamma
_{1}\left( s\right) \right\vert }{\left\vert t-s\right\vert ^{\frac{1}{p}%
}m_{1}\left( t-s\right) }<\infty \text{, }\sup_{0\leq s<t\leq 1}\frac{%
\left\vert \gamma _{2}\left( t\right) -\gamma _{2}\left( s\right)
\right\vert }{\left\vert t-s\right\vert ^{\frac{1}{q}}m_{2}\left( t-s\right) 
}<\infty ,
\end{equation*}%
but the Riemann-Stieltjes integral $\int_{0}^{1}\gamma _{1}\left( t\right)
d\gamma _{2}\left( t\right) $ does not exist.
\end{example}

Proof of Example \ref{Example integration finite is necessary} is give in
page \pageref{Proof of Example integration finite is necessary}.

As a consequence of refined Young integral, we have a sufficient condition
for path to be in $\mathcal{G}_{2}\left( \mathcal{V}\right) $.

\begin{theorem}
\label{Theorem larger space of enhancible paths}Let $\gamma :\left[ 0,1%
\right] \rightarrow \mathcal{V}$ be a continuous paths. If there exists a
non-decreasing function $m:\left[ 0,1\right] \rightarrow \overline{%
%TCIMACRO{\U{211d} }%
%BeginExpansion
\mathbb{R}
%EndExpansion
^{+}}$ satisfying%
\begin{equation*}
\lim_{t\rightarrow 0}m\left( t\right) =0\text{, }m\left( 1\right) \leq 1%
\text{, and }\int_{0}^{1}\frac{m^{2}\left( t\right) }{t}dt<\infty ,
\end{equation*}%
such that 
\begin{equation}
\sup_{0\leq s<t\leq 1}\frac{\left\Vert \gamma \left( t\right) -\gamma \left(
s\right) \right\Vert }{\left\vert t-s\right\vert ^{\frac{1}{2}}m\left(
t-s\right) }<\infty .  \label{Condition on gamam in G2}
\end{equation}%
Then $\gamma \in \mathcal{G}_{2}\left( \mathcal{V}\right) $.
\end{theorem}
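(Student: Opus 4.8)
The plan is to reduce to Theorem~\ref{Theorem condition to be enhancible}: it suffices to show that $\gamma\in C^{0,2-var}([0,1],\mathcal{V})$ and that $A(\gamma^D)$ converges in $1$-variation as $|D|\to0$. The candidate for the limit (equivalently, for the enhancing second level) will be produced by the refined Young integral, Theorem~\ref{Theorem generalized Young integral}, applied with $\gamma_1=\gamma_2=\gamma$.

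First I would verify vanishing $2$-variation. Set $C_0:=\sup_{0\le s<t\le1}\Vert\gamma(t)-\gamma(s)\Vert/(|t-s|^{1/2}m(t-s))<\infty$ from~(\ref{Condition on gamam in G2}); since $m$ is non-decreasing with $m(1)\le1$ and $m(\delta)\to0$, any partition $D$ of mesh $\le\delta$ satisfies $\sum_j\Vert\gamma(t_{j+1})-\gamma(t_j)\Vert^2\le C_0^2 m^2(\delta)\sum_j(t_{j+1}-t_j)=C_0^2 m^2(\delta)$, so $\omega_2(\gamma,\delta)\le C_0 m(\delta)\to0$. Thus $\gamma\in C^{0,2-var}$, and by Wiener's characterization~(\ref{equivalent relation for vanishing 2-var path}) the interpolants $\gamma^D$ converge to $\gamma$ in $2$-variation. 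Next I would invoke Theorem~\ref{Theorem generalized Young integral} with $p=q=2$ and $m_1=m_2=m$: the hypothesis $\int_0^1 m^2(t)/t\,dt<\infty$ is exactly what is needed, and it yields the Riemann--Stieltjes integral $\int_0^\cdot\gamma\otimes d\gamma$ together with, via the remark following that theorem, the bound $\Vert I(\gamma,\gamma)\Vert_{1-var}<\infty$. Taking the antisymmetric part gives a well-defined $\alpha(s,t):=\tfrac12\int_s^t[\gamma(u)-\gamma(s),d\gamma(u)]$ of finite $1$-variation, the only possible enhancement (as discussed after Theorem~\ref{Theorem condition to be enhancible}).

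It then remains to prove $A(\gamma^D)\to\alpha$ in $1$-variation. I would bound, uniformly over auxiliary partitions $\{r_i\}$ of $[0,1]$, the sum $\sum_i\Vert A(\gamma^D)(r_i,r_{i+1})-\alpha(r_i,r_{i+1})\Vert$. On each $[r_i,r_{i+1}]$ the summand is a Young-type remainder between the piecewise-linear area (built from $\gamma$ sampled on $D$) and the true integral; the refined Young estimate controls it by $C_0^2(r_{i+1}-r_i)^{1/p+1/q}$ times a factor governed by the small-scale tail $\int_0^\delta m^2(u)/u\,du$, with $\delta\sim|D|$. Because $p^{-1}+q^{-1}=1$ the length exponent is exactly $1$, so summing over $i$ collapses the length factors to the total length $1$ and leaves a bound of order $C_0^2\int_0^\delta m^2(u)/u\,du$, which tends to $0$ as $|D|\to0$ since $\int_0^1 m^2/u\,du$ converges. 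Hence $A(\gamma^D)$ is Cauchy and converges in $1$-variation to $\alpha$, and Theorem~\ref{Theorem condition to be enhancible} gives $\gamma\in\mathcal{G}_2(\mathcal{V})$.

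The hard part is precisely this last step---promoting the pointwise convergence of the Riemann sums (immediate from existence of the integral) to convergence in $1$-variation. The two decisive ingredients are the \emph{linearity in interval length} of the refined Young remainder, which is what allows an unrestricted summation against an arbitrary partition without blow-up and which relies on $p^{-1}+q^{-1}=1$, and the \emph{vanishing of the small-scale tail} $\int_0^\delta m^2/u\,du$, which furnishes uniform smallness across all cells of a fine partition at once. Some care is also needed in the bookkeeping when the auxiliary points $r_i$ fall strictly inside cells of $D$, so that $\gamma^D(r_i)\neq\gamma(r_i)$; here one checks that $\gamma^D$ inherits, at the scales relevant to $D$, a modulus comparable to~(\ref{Condition on gamam in G2}) with constant independent of $D$, so that the refined estimate applies uniformly.
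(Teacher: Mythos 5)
Your overall strategy coincides with the paper's: verify $\omega_{2}\left( \gamma ,\delta \right) \leq Cm\left( \delta \right) $, prove that $A\left( \gamma ^{D}\right) $ converges in $1$-variation as $\left\vert D\right\vert \rightarrow 0$ using the refined Young machinery with $p=q=2$, and conclude via Theorem \ref{Theorem condition to be enhancible}. You also correctly isolate the two mechanisms that make this work (the exponent $p^{-1}+q^{-1}=1$ turning the cell-by-cell bounds into a telescoping sum of lengths, and the smallness of the tail $\int_{0}^{\delta }m^{2}\left( t\right) t^{-1}dt$). But the step you yourself flag as the hard part is exactly where the content lies, and as sketched it is not in the right shape. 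You propose to bound $\sum_{i}\left\Vert A\left( \gamma ^{D}\right) \left( r_{i},r_{i+1}\right) -\alpha \left( r_{i},r_{i+1}\right) \right\Vert $ over arbitrary auxiliary partitions $\left\{ r_{i}\right\} $ by a per-interval estimate of order $C_{0}^{2}\left( r_{i+1}-r_{i}\right) $ times a tail factor. Two problems: first, when a cell $\left[ r_{i},r_{i+1}\right] $ of the auxiliary partition sits inside a cell of $D$, the piecewise-linear area vanishes there and the summand reduces to $\left\Vert \alpha \left( r_{i},r_{i+1}\right) \right\Vert $, so you need to know in advance that $\alpha $ has small $1$-variation at scales below $\left\vert D\right\vert $ --- which is essentially the conclusion you are trying to reach, and Theorem \ref{Theorem generalized Young integral} as stated is a global estimate on $\left[ 0,1\right] $ that does not hand you this localized, summable-over-cells form without redoing its proof. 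Second, the difference of two areas built on \emph{different} first-level paths ($\gamma ^{D}$ versus $\gamma $) is not additive over intervals, so the cross terms have to be extracted explicitly before any summation against a partition is legitimate.

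The paper sidesteps both issues by proving a Cauchy estimate between a partition $D_{1}$ and a refinement $D_{2}$ rather than comparing with the limit. In Lemma \ref{Lemma difference of 1-var of area copy(1)} the cross terms are isolated and bounded by $4\left\Vert \gamma \right\Vert _{2-var}\omega _{2}\left( \gamma ,\delta \right) \leq 4C^{2}m\left( \delta \right) $, and since $\gamma ^{D_{1}}$ is linear on each cell of $D_{1}$ its internal area vanishes there, leaving only $\sum_{k}\left\Vert A\left( \gamma ^{D_{2}}\right) \right\Vert _{1-var,\left[ t_{k},t_{k+1}\right] }$. That sum is then handled by iterating the bisection Lemma \ref{Lemma estimation of area bisecting interval copy(1)}: each halving of the mesh contributes $2C^{2}m^{2}\left( \delta 2^{-n}\right) $ (this is where superadditivity of $\left\Vert \gamma \right\Vert _{2-var}^{2}$ plus Cauchy--Schwarz replaces your "linearity in length"), and the geometric sequence of scales is exactly what converts $\sum_{n}m^{2}\left( \delta 2^{-n}\right) $ into $m^{2}\left( \delta \right) +2\int_{0}^{\delta }m^{2}\left( t\right) t^{-1}dt$. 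If you rewrite your final step as this Cauchy argument between $D$ and its refinements, the bookkeeping difficulties you mention disappear and the proof closes; as written, the key inequality remains asserted rather than proved.
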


Theorem \ref{Theorem larger space of enhancible paths} is proved in page %
\pageref{Proof of Theorem larger space of enhancible paths}.

\begin{remark}
In Theorem \ref{Theorem larger space of enhancible paths}, by adding a $\log 
$ term and $\log $-$\log $ term so on and so forth, one can get a sequence
of nested spaces in $\mathcal{G}_{2}\left( \mathcal{V}\right) $. Because of
inclusion, their union is still a space in $\mathcal{G}_{2}\left( \mathcal{V}%
\right) $.
\end{remark}

\begin{remark}
As a consequence of Example \ref{Example integration finite is necessary},
for any non-decreasing function $m:\left[ 0,1\right] \rightarrow \overline{%
%TCIMACRO{\U{211d} }%
%BeginExpansion
\mathbb{R}
%EndExpansion
^{+}}$, $\lim_{t\rightarrow 0}m\left( t\right) =0$, $m\left( 1\right) \leq 1$
and $\int_{0}^{1}\frac{m^{2}\left( t\right) }{t}dt=\infty $, there exists $%
\gamma :\left[ 0,1\right] \rightarrow 
%TCIMACRO{\U{2102} }%
%BeginExpansion
\mathbb{C}
%EndExpansion
$ satisfying $\left( \ref{Condition on gamam in G2}\right) $ but not in $%
\mathcal{G}_{2}\left( \mathcal{%
%TCIMACRO{\U{2102} }%
%BeginExpansion
\mathbb{C}
%EndExpansion
}\right) $.
\end{remark}

\section{Proofs}

Recall $\bigtriangleup _{\left[ 0,1\right] }=\left\{ \left( s,t\right)
|0\leq s\leq t\leq 1\right\} $.

\begin{lemma}
\label{Lemma existence of Cp}For any $p>1$ and any $a>0$, there exists
constant $C_{a,p}>0$, such that for any integer $m\geq 1$,%
\begin{equation*}
\sum_{k=1}^{m}\frac{2^{2\left( 1-\frac{1}{p}\right) k}}{k^{a}}\leq C_{a,p}%
\frac{2^{2\left( 1-\frac{1}{p}\right) m}}{m^{a}}.
\end{equation*}
\end{lemma}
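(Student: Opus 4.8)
The plan is to reduce the claim to a comparison with a convergent geometric series. Since $p>1$ we have $1-\tfrac1p>0$, so the base $\lambda:=2^{2(1-1/p)}$ satisfies $\lambda>1$, and the inequality to be proved reads $\sum_{k=1}^m \lambda^k/k^a \le C_{a,p}\,\lambda^m/m^a$. The key observation is that the summands grow essentially like a geometric progression of ratio $\lambda>1$, so the last term $\lambda^m/m^a$ already dominates the entire sum up to a multiplicative constant; the whole proof is just a careful accounting of this fact.

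Concretely, I would factor out the largest term and reindex by $j=m-k$, writing
\[
\frac{m^a}{\lambda^m}\sum_{k=1}^m\frac{\lambda^k}{k^a}=\sum_{j=0}^{m-1}\lambda^{-j}\Big(\frac{m}{m-j}\Big)^{a}.
\]
It then suffices to bound the right-hand side by a constant independent of $m$. I would split the range of $j$ at $m/2$. For $0\le j\le m/2$ one has $m-j\ge m/2$, hence $(m/(m-j))^a\le 2^a$, and these terms are controlled by the full geometric series $2^a\sum_{j\ge0}\lambda^{-j}=2^a/(1-\lambda^{-1})$. For $j>m/2$ the polynomial factor can be as large as $(m/(m-j))^a\le m^a$ (using $m-j\ge1$), but the exponential weight is now at most $\lambda^{-m/2}$ and there are at most $m$ such terms, so this block is bounded by $m^{a+1}\lambda^{-m/2}$.

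The only real point is the second block: one must check that $m^{a+1}\lambda^{-m/2}$ stays bounded as $m\to\infty$. This is exactly where $\lambda>1$ (equivalently $p>1$) enters, since an exponential with base $\lambda^{1/2}>1$ dominates any fixed power of $m$, giving $\sup_{m\ge1}m^{a+1}\lambda^{-m/2}<\infty$. Adding the two bounds yields
\[
C_{a,p}:=\frac{2^a}{1-\lambda^{-1}}+\sup_{m\ge1}m^{a+1}\lambda^{-m/2}<\infty,
\]
which is the desired constant. An alternative, essentially equivalent route is induction on $m$: the step from $m-1$ to $m$ reduces to requiring $C_{a,p}\big(1-\lambda^{-1}(m/(m-1))^a\big)\ge1$, which holds for all large $m$ because $(m/(m-1))^a\to1$ and $\lambda^{-1}<1$, the finitely many small values of $m$ being absorbed into the constant. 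Either way the main (and only) subtlety is the handling of the terms with small $k$, where the polynomial factor $(m/k)^a$ is large but is overwhelmed by the geometric decay $\lambda^{-(m-k)}$.
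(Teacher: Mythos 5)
Your main argument is correct and takes a genuinely different route from the paper. The paper proves the lemma by induction on $m$: writing $b:=2^{2(1-1/p)}>1$, it derives the sufficient condition $\bigl((c-1)^{1/a}b^{1/a}-c^{1/a}\bigr)m_{1}\geq c^{1/a}$ for the inductive step to preserve the bound with constant $c$, observes that this holds for any fixed $C_{1}>b/(b-1)$ once $m_{1}$ exceeds an explicit threshold, and then defines $C_{a,p}$ as the maximum of $C_{1}$ and the finitely many ratios $\frac{m^{a}}{b^{m}}\sum_{k=1}^{m}b^{k}k^{-a}$ below that threshold. This is exactly the inductive alternative you sketch at the end of your proposal, down to the same reduction $C\bigl(1-\lambda^{-1}(m/(m-1))^{a}\bigr)\geq 1$. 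Your primary argument instead avoids induction: after reindexing by $j=m-k$ you split at $j=m/2$, bound the near-diagonal block by the full geometric series $2^{a}/(1-\lambda^{-1})$, and crush the far block by $m^{a+1}\lambda^{-m/2}$, which is bounded uniformly in $m$ precisely because $\lambda>1$. Both proofs are complete; yours has the advantage of producing a constant in closed form without tracking an explicit induction threshold, while the paper's is marginally more elementary in that it needs no reindexing identity. The one step worth stating explicitly if you write this up is the verification that $\sup_{m\geq 1}m^{a+1}\lambda^{-m/2}<\infty$, which is standard (exponential versus polynomial growth) but is the precise point where the hypothesis $p>1$ is used.
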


\begin{proof}
Fix $p>1$. Denote $b:=2^{2\left( 1-\frac{1}{p}\right) }$. Firstly, suppose $%
c>0$ is a constant, and $\sum_{k=1}^{m_{1}}k^{-a}b^{k}\leq c\left(
m_{1}\right) ^{-a}b^{m_{1}}$. Then $\sum_{k=1}^{m_{1}+1}k^{-a}b^{k}\leq
c\left( m_{1}+1\right) ^{-a}b^{m_{1}+1}$ would hold provided:%
\begin{equation*}
c\frac{b^{m_{1}}}{m_{1}^{a}}+\frac{b^{m_{1}+1}}{\left( m_{1}+1\right) ^{a}}%
\leq c\frac{b^{m_{1}+1}}{\left( m_{1}+1\right) ^{a}}\text{, i.e. }\left(
\left( c-1\right) ^{\frac{1}{a}}b^{\frac{1}{a}}-c^{\frac{1}{a}}\right)
m_{1}\geq c^{\frac{1}{a}}.
\end{equation*}%
Then we choose $C\ $in this way: Fix constant $C_{1}>\frac{b}{b-1}$, and let%
\begin{equation*}
C_{a,p}:=C_{1}\vee \max \left\{ \frac{m^{a}}{b^{m}}\sum_{k=1}^{m}\frac{b^{k}%
}{k^{a}},1\leq m\leq \left[ \frac{C_{1}^{\frac{1}{a}}}{\left( C_{1}-1\right)
^{\frac{1}{a}}b^{\frac{1}{a}}-C_{1}^{\frac{1}{a}}}\right] +1\right\} .
\end{equation*}
\end{proof}

The following lemma is in the form of Exercise 9.14 in \cite{P. Friz}, only
that we give an uniform estimates.

\begin{lemma}
\label{Lemma, decay of 2-var}Suppose $\mathcal{V}$ is a Banach space, $%
\varphi _{n}:\bigtriangleup _{\left[ 0,1\right] }\rightarrow \mathcal{V}$, $%
n\geq 1$, and there exists constant $M>0$ s.t.%
\begin{equation*}
\left\Vert \varphi _{n}\left( s,t\right) \right\Vert \leq M\left( 1\wedge
\left\vert t-s\right\vert \right) ,\forall \left( s,t\right) \in
\bigtriangleup _{\left[ 0,T\right] },\forall n\geq 1\text{.}
\end{equation*}

For $p\in \left( 1,\infty \right) $, $a\in \left( 0,\infty \right) $ and
integers $1\leq N_{1}\leq N_{2}\leq \infty $, define 
\begin{equation*}
g_{N_{1},N_{2}}^{a,p}\left( s,t\right) =\sum_{k=N_{1}}^{N_{2}}\frac{1}{%
k^{a}2^{\frac{2k}{p}}}\varphi _{k}\left( 2^{2k}s,2^{2k}t\right) \text{, }%
t\in \left[ 0,1\right] .
\end{equation*}%
Then%
\begin{equation}
(i)\text{ }\sup_{1\leq N_{1}\leq N_{2}\leq \infty }\sup_{0\leq s<t\leq 1}%
\frac{\left\Vert g_{N_{1},N_{2}}^{a,p}\left( s,t\right) \right\Vert }{%
\left\vert t-s\right\vert ^{\frac{1}{p}}\left( \ln \frac{2}{t-s}\right) ^{-a}%
}\leq C_{a,p.M}<\infty ;  \label{inequality holder continuity}
\end{equation}%
for any $\delta \in \left( 0,1\right) $ (recall $\omega _{p}\left( \gamma
,\delta \right) $ defined at $\left( \ref{Definition vanishing p-variation}%
\right) $), we have 
\begin{equation}
(ii)\sup_{1\leq N_{1}\leq N_{2}\leq \infty }\omega _{p}\left(
g_{N_{1},N_{2}}^{a,p},\delta \right) \leq C_{a,p,M}\left( \ln \frac{2}{%
\delta }\right) ^{-a};  \label{inequality vanishing 2-var}
\end{equation}%
and for any fixed $N_{1}\geq 1$,%
\begin{equation}
(iii)\sup_{N_{1}\leq N_{2}\leq \infty }\left\Vert
g_{N_{1},N_{2}}^{a,p}\right\Vert _{p-var,\left[ 0,1\right] }\leq \frac{%
\widetilde{C_{a,p,M}}}{N_{1}^{a}},  \label{inequality 2-var tends to zero}
\end{equation}%
where $C_{a,p,M}=\left( \ln 4\right) ^{a}2^{-\frac{1}{p}}M\left(
8C_{a,p}+\left( 2^{\frac{2}{p}}-1\right) ^{-1}\right) $ with $C_{a,p}$ from
Lemma \ref{Lemma existence of Cp}, and $\widetilde{C_{a,p,M}}=\left( \left(
\ln 4\right) ^{-ap}C_{a,p.M}^{p}+2M^{p}\left( 1-2^{-\frac{2}{p}}\right)
^{-p}\right) ^{\frac{1}{p}}$.
\end{lemma}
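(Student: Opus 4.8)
The plan is to establish the pointwise Hölder-type estimate (i) first, since (ii) and (iii) then follow by elementary summation arguments. For (i), fix $(s,t)$ with $h:=t-s\in(0,1]$ and estimate termwise using the hypothesis $\left\Vert \varphi_k(2^{2k}s,2^{2k}t)\right\Vert \le M(1\wedge 2^{2k}h)$, which gives
\[
\left\Vert g_{N_1,N_2}^{a,p}(s,t)\right\Vert \le M\sum_{k=N_1}^{N_2}\frac{1\wedge 2^{2k}h}{k^{a}2^{2k/p}}.
\]
The summand changes behaviour at the dyadic scale matched to $h$, so I would let $K$ be the integer with $2^{2K}\le h^{-1}<2^{2(K+1)}$ and split the sum at $k=K$.

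For the low-frequency block $N_1\le k\le K$ one has $1\wedge 2^{2k}h=2^{2k}h$, so the block equals $Mh\sum_{k\le K}2^{2(1-1/p)k}/k^{a}$, which Lemma \ref{Lemma existence of Cp} bounds by $Mh\,C_{a,p}\,2^{2(1-1/p)K}/K^{a}$; using $2^{2K}\le h^{-1}$ this is $\le MC_{a,p}h^{1/p}K^{-a}$. For the high-frequency block $k\ge K+1$ one has $1\wedge 2^{2k}h=1$, and bounding $k^{-a}\le(K+1)^{-a}$ leaves the geometric tail $\sum_{k\ge K+1}2^{-2k/p}\le 2^{-2(K+1)/p}(1-2^{-2/p})^{-1}<h^{1/p}(1-2^{-2/p})^{-1}$, the last step from $2^{2(K+1)}>h^{-1}$. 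Both blocks are thus $\lesssim h^{1/p}K^{-a}$. The remaining work is to convert the dyadic cutoff into the logarithmic modulus: from $2^{2K}\le h^{-1}<2^{2(K+1)}$ one reads off $K\asymp\ln\frac{2}{h}$, hence $K^{-a}\le(\ln 4)^{a}(\ln\frac{2}{h})^{-a}$ up to a numerical factor absorbed into $C_{a,p,M}=(\ln4)^{a}2^{-1/p}M\left(8C_{a,p}+(2^{2/p}-1)^{-1}\right)$; the edge cases $K=0$ (large $h$) and $K\ge N_2$ are covered by the same constant since then only one block is present. This conversion, together with the small-$h$/boundary bookkeeping that pins down the explicit constant, is the delicate point and the main obstacle.

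Given (i), part (ii) is immediate: for a partition $D$ with $|D|\le\delta$, the map $x\mapsto(\ln\frac{2}{x})^{-a}$ is non-decreasing on $(0,1]$ and $|t_{j+1}-t_j|\le\delta$, so raising (i) to the $p$th power gives $\left\Vert g(t_j,t_{j+1})\right\Vert^{p}\le C_{a,p,M}^{p}(t_{j+1}-t_j)(\ln\frac{2}{\delta})^{-ap}$. Summing over $j$ and using $\sum_j(t_{j+1}-t_j)=1$, then taking $p$th roots and the supremum over such $D$, yields $\omega_p(g_{N_1,N_2}^{a,p},\delta)\le C_{a,p,M}(\ln\frac{2}{\delta})^{-a}$, which is (ii).

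For (iii), fix $N_1$ and, for an arbitrary partition $D$, split its intervals into \emph{fine} ones ($|t_{j+1}-t_j|\le 2^{-2N_1}$) and \emph{coarse} ones ($>2^{-2N_1}$). Refining the coarse intervals to mesh $\le 2^{-2N_1}$ only enlarges the non-negative $\ell^{p}$-sum, so the fine part is at most $\omega_p(g,2^{-2N_1})^{p}$, which by (ii) with $\delta=2^{-2N_1}$ (so $\ln\frac{2}{\delta}=(2N_1+1)\ln2\ge 2N_1\ln2$) is $\le(\ln4)^{-ap}C_{a,p,M}^{p}N_1^{-ap}$. For each coarse interval every factor satisfies $1\wedge 2^{2k}h=1$ (since $2^{2k}h>2^{2(k-N_1)}\ge1$ for $k\ge N_1$), giving $\left\Vert g(t_j,t_{j+1})\right\Vert\le MN_1^{-a}2^{-2N_1/p}(1-2^{-2/p})^{-1}$; as at most $2^{2N_1}$ coarse intervals fit in $[0,1]$, their total contribution is $\le M^{p}N_1^{-ap}(1-2^{-2/p})^{-p}\le 2M^{p}N_1^{-ap}(1-2^{-2/p})^{-p}$. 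Adding the two parts reproduces exactly $\widetilde{C_{a,p,M}}^{p}N_1^{-ap}$, so (iii) holds. In summary, step (i) carries all the real content, while (ii) and (iii) are routine consequences via monotonicity of the modulus and a fine/coarse partition split.
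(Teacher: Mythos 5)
Your proposal is correct and follows essentially the same route as the paper's proof: the same dyadic split of the sum at the scale matched to $t-s$ (with Lemma \ref{Lemma existence of Cp} controlling the low-frequency block and a geometric tail for the high-frequency one), the same derivation of (ii) from (i) via $\sum_j(t_{j+1}-t_j)=1$, and the same fine/coarse interval split for (iii). The only deviation is that your cutoff $K$ with $2^{2K}\le h^{-1}<2^{2(K+1)}$ is slightly smaller than the paper's $n=[\log_4\frac{8}{t-s}]$, so reproducing the exact factor $(\ln 4)^a$ in $C_{a,p,M}$ needs the small bookkeeping adjustment you already flag; this affects only the numerical constant, not the substance.
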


\begin{proof}
For $\left( \ref{inequality holder continuity}\right) $. Fix $0\leq s<t\leq
1 $. Denote $n:=\left[ \log _{4}\frac{8}{t-s}\right] $, then use $\left\Vert
\varphi _{k}\left( s,t\right) \right\Vert \leq M\left( 1\wedge \left\vert
t-s\right\vert \right) $, we get 
\begin{eqnarray*}
\left\Vert g_{N_{1},N_{2}}^{a,p}\left( s,t\right) \right\Vert &\leq
&\sum_{k=1}^{n}\frac{1}{k^{a}2^{\frac{2k}{p}}}\left\Vert \varphi _{k}\left(
2^{2k}s,2^{2k}t\right) \right\Vert +\sum_{k=n+1}^{\infty }\frac{1}{k^{a}2^{%
\frac{2k}{p}}}\left\Vert \varphi _{k}\left( 2^{2k}s,2^{2k}t\right)
\right\Vert \\
&\leq &M\sum_{k=1}^{n}\frac{2^{2\left( 1-\frac{1}{p}\right) k}}{k^{a}}%
\left\vert t-s\right\vert +\sum_{k=n+1}^{\infty }\frac{M}{k^{a}2^{\frac{2k}{p%
}}}.
\end{eqnarray*}%
Based on Lemma \ref{Lemma existence of Cp}, there exists $C_{a,p}$, s.t. for
any $m\geq 1$, $\sum_{k=1}^{m}k^{-a}2^{2\left( 1-\frac{1}{p}\right) k}\leq
C_{a,p}m^{-a}2^{2\left( 1-\frac{1}{p}\right) m}$. Thus ($n>\log _{4}\frac{2}{%
t-s}$ and $\frac{2}{t-s}<2^{2n}\leq \frac{8}{t-s}$), 
\begin{eqnarray*}
\left\Vert g_{N_{1},N_{2}}^{a,p}\left( s,t\right) \right\Vert &\leq &MC_{a,p}%
\frac{2^{2\left( 1-\frac{1}{p}\right) n}}{n^{a}}\left\vert t-s\right\vert +%
\frac{M}{2^{\frac{2}{p}}-1}\frac{1}{n^{a}2^{\frac{2n}{p}}} \\
&\leq &M\left( 8C_{a,p}+\frac{1}{2^{\frac{2}{p}}-1}\right) \frac{1}{n^{a}2^{%
\frac{2n}{p}}} \\
&\leq &\frac{\left( \ln 4\right) ^{a}M}{2^{\frac{1}{p}}}\left( 8C_{a,p}+%
\frac{1}{2^{\frac{2}{p}}-1}\right) \left\vert t-s\right\vert ^{\frac{1}{p}%
}\left( \ln \frac{2}{t-s}\right) ^{-a}\text{.}
\end{eqnarray*}%
Since our estimates holds for any $0\leq s<t\leq 1$ and any integers $1\leq
N_{1}\leq N_{2}\leq \infty $, $\left( \ref{inequality holder continuity}%
\right) $ is done.

Based on $\left( \ref{inequality holder continuity}\right) $, for any $%
\delta \in \left( 0,1\right) $, and any finite partition $D=\left\{
t_{j}\right\} $, $\left\vert D\right\vert \leq \delta $, we have%
\begin{equation*}
\sum_{j,t_{j}\in D}\left\Vert g_{N_{1},N_{2}}^{a,p}\left(
t_{j},t_{j+1}\right) \right\Vert ^{p}\leq C_{a,p.M}^{p}\left( \ln \frac{2}{%
\delta }\right) ^{-ap}\sum_{j,t_{j}\in D}\left\vert t_{j+1}-t_{j}\right\vert
=C_{a,p,M}^{p}\left( \ln \frac{2}{\delta }\right) ^{-ap}.
\end{equation*}%
It holds for any $D$, $\left\vert D\right\vert \leq \delta $, and any
integers $1\leq N_{1}\leq N_{2}\leq \infty $, so $\left( \ref{inequality
vanishing 2-var}\right) $ holds.

Then we prove $\left( \ref{inequality 2-var tends to zero}\right) $. Fix $%
N_{1}$. Finite partitions whose mesh less then $2^{-2N_{1}}$ is done in $%
\left( \ref{inequality vanishing 2-var}\right) $:%
\begin{equation}
\sup_{N_{1}\leq N_{2}\leq \infty }\sup_{\left\vert D\right\vert \leq
2^{-2N_{1}}}\sum_{j,t_{j}\in D}\left\Vert g_{N_{1},N_{2}}^{a,p}\left(
t_{j},t_{j+1}\right) \right\Vert ^{p}\leq \frac{C_{a,p.M}^{p}}{\left( \ln
4\right) ^{ap}}\frac{1}{N_{1}^{ap}{}}\text{.}  \label{inner3}
\end{equation}%
For finite partitions $D=\left\{ t_{j}\right\} $ satisfying $\left\vert
D\right\vert >2^{-2N_{1}}$, we denote $J_{N_{1}+}:=\left\{ j|\left\vert
t_{j+1}-t_{j}\right\vert >2^{-2N_{1}}\right\} $. Since there can not be more
than $2\times 2^{2N_{1}}$ many subintervals in $J_{N_{1}+}$ (and using $%
\left\vert \varphi _{n}\left( s,t\right) \right\vert \leq M$) 
\begin{equation*}
\sum_{t_{j}\in D,j\in J_{N_{1}+}}\left\Vert g_{N_{1},N_{2}}^{a,p}\left(
t_{j},t_{j+1}\right) \right\Vert ^{p}\leq 2^{2N_{1}+1}\left(
\sum_{k=N_{1}}^{\infty }\frac{M}{k^{a}2^{\frac{2k}{p}}}\right) ^{p}\leq
2\left( \frac{2^{\frac{2}{p}}M}{2^{\frac{2}{p}}-1}\right) ^{p}\frac{1}{%
N_{1}^{ap}}.
\end{equation*}%
The intervals in $D$ which are not in $J_{N_{1}+}$ can be treated as
subintervals in another finite partition $D^{\prime }$, $\left\vert
D^{\prime }\right\vert \leq 2^{-2N_{1}}$, so using $\left( \ref{inner3}%
\right) $ to bound them, we get 
\begin{equation*}
\sum_{t_{j}\in D}\left\Vert g_{N_{1},N_{2}}^{a,p}\left( t_{j},t_{j+1}\right)
\right\Vert ^{p}\leq \sum_{j\notin J_{N_{1}+}}+\sum_{j\in J_{N_{1}+}}\leq
\left( \frac{C_{a,p,M}^{p}}{\left( \ln 4\right) ^{ap}}+2\left( \frac{2^{%
\frac{2}{p}}M}{2^{\frac{2}{p}}-1}\right) ^{p}\right) \frac{1}{N_{1}^{ap}{}}.
\end{equation*}%
Our estimates hold for any finite partition $D$, and for any integer $%
N_{2}\geq N_{1}$, so $\left( \ref{inequality 2-var tends to zero}\right) $
holds.
\end{proof}

\begin{example}
\label{Example non-existence of Reimann integral}Suppose $c>\pi $ is a
constant, and $\left\{ l_{n}\right\} $ is a sequence of increasing integers,
satisfying 
\begin{equation}
c^{n}\leq \sum_{k=l_{n}}^{l_{n+1}-1}\frac{1}{k}\leq c^{n}+1\text{, }\forall
n\geq 1.  \label{condition on ln}
\end{equation}%
If define $f:\left[ 0,1\right] \rightarrow 
%TCIMACRO{\U{2102} }%
%BeginExpansion
\mathbb{C}
%EndExpansion
$ as%
\begin{equation*}
f\left( t\right) =\sum_{n=1}^{\infty }\sum_{k=l_{n}}^{l_{n+1}-1}\frac{1}{k^{%
\frac{1}{2}}2^{k}}\exp \left( 2\pi i\left( -1\right) ^{n}2^{2k}t\right) 
\text{, \ }t\in \left[ 0,1\right] \text{.}
\end{equation*}%
Then $f$ is of vanishing $2$-variation, and for any $a\in \left[ -\infty
,\infty \right] $, there exists a sequence of finite partition $\left\{
D_{n}^{a}\right\} $ of $\left[ 0,1\right] $ satisfying (with $x:=\func{Re}f$%
, $y:=\func{Im}f$)%
\begin{equation}
\lim_{n\rightarrow \infty }\left\vert D_{n}^{a}\right\vert =0\text{ and }%
\lim_{n\rightarrow \infty }\sum_{l,t_{l}\in D_{n}^{a}}\left( x\left(
t_{l}\right) y\left( t_{l+1}\right) -y\left( t_{l}\right) x\left(
t_{l+1}\right) \right) =a.  \label{condition on D_n^a}
\end{equation}
\end{example}

The $\left( -1\right) ^{n}$ ensure that the limit oscillates. If without $%
\left( -1\right) ^{n}$ we only get divergence, while not non-existence.

\begin{proof}
$f$ of vanishing $2$-variation follows from $\left( \ref{inequality
vanishing 2-var}\right) $ in Lemma \ref{Lemma, decay of 2-var} (with $a=%
\frac{1}{2}$, $p=2$, $M=1$, $N_{1}=1$, $N_{2}=\infty $). Suppose $N\geq 1$
is an integer, denote%
\begin{equation}
D_{N}:=\left\{ l2^{-2N}\right\} _{l=0}^{2^{2N}}\text{, }t_{l}^{N}:=l2^{-2N}%
\text{, }l=0,1,\dots ,2^{2N}\text{,}  \label{inner definition Dn}
\end{equation}%
\begin{equation}
\text{and }\left\langle f,D_{N}\right\rangle :=\sum_{l=0}^{2^{2N}-1}\left(
x\left( t_{l}^{N}\right) y\left( t_{l+1}^{N}\right) -y\left(
t_{l}^{N}\right) x\left( t_{l+1}^{N}\right) \right) .
\label{inner definition of sigmaD_N}
\end{equation}%
We want to prove that for each $a\in \left[ -\infty ,\infty \right] $, there
exists a sequence of finite partitions $\left\{ D_{n}^{a}\right\}
_{n}\subset \left\{ D_{N}\right\} _{N}$ , satisfying $\lim_{n\rightarrow
\infty }\left\langle f,D_{n}^{a}\right\rangle =a$.

Denote 
\begin{equation*}
\epsilon _{k}=\left( -1\right) ^{n}\text{, }k=l_{n},\dots ,l_{n+1}-1\text{, }%
c_{k}^{N}=2\pi 2^{2k-2N}\epsilon _{k}\text{, \ }k=l_{1},\dots ,N-1.
\end{equation*}%
Then $2\pi \epsilon _{k}2^{2k}t_{l}=lc_{k}^{N}$, and%
\begin{eqnarray*}
&&x\left( t_{l}^{N}\right) y\left( t_{l+1}^{N}\right) -y\left(
t_{l}^{N}\right) x\left( t_{l+1}^{N}\right) \\
&=&\left( \sum_{j=l_{1}}^{N-1}\frac{1}{j^{\frac{1}{2}}2^{j}}\cos \left( 2\pi
\epsilon _{j}2^{2j}t_{l}^{N}\right) \right) \left( \sum_{k=l_{1}}^{N-1}\frac{%
1}{k^{\frac{1}{2}}2^{k}}\sin \left( 2\pi \epsilon
_{k}2^{2k}t_{l+1}^{N}\right) \right) \\
&&-\left( \sum_{j=l_{1}}^{N-1}\frac{1}{j^{\frac{1}{2}}2^{j}}\sin \left( 2\pi
\epsilon _{j}2^{2j}t_{l}^{N}\right) \right) \left( \sum_{k=l_{1}}^{N-1}\frac{%
1}{k^{\frac{1}{2}}2^{k}}\cos \left( 2\pi \epsilon
_{k}2^{2k}t_{l+1}^{N}\right) \right) \\
&=&\sum_{k,j=l_{1}}^{N-1}\frac{1}{k^{\frac{1}{2}}j^{\frac{1}{2}}2^{k+j}}\sin
\left( \left( l+1\right) c_{k}^{N}-lc_{j}^{N}\right) \\
&=&\sum_{k=l_{1}}^{N-1}\frac{1}{k2^{2k}}\sin \left( 2\pi \epsilon
_{k}2^{2k-2N}\right) \\
&&+\sum_{l_{1}\leq k<j\leq N-1}\frac{1}{k^{\frac{1}{2}}j^{\frac{1}{2}}2^{k+j}%
}\left( \sin \left( l\left( c_{k}^{N}-c_{j}^{N}\right) +c_{k}^{N}\right)
+\sin \left( l\left( c_{j}^{N}-c_{k}^{N}\right) +c_{j}^{N}\right) \right)
\end{eqnarray*}%
Sum $l$ from $0$ to $2^{2N}-1$, 
\begin{eqnarray*}
\left\langle f,D_{N}\right\rangle &=&\sum_{l=0}^{2^{2N}-1}x\left(
t_{l}^{N}\right) y\left( t_{l+1}^{N}\right) -y\left( t_{l}^{N}\right)
x\left( t_{l+1}^{N}\right) \\
&=&\sum_{k=l_{1}}^{N-1}\frac{1}{k2^{2k-2N}}\sin \left( 2\pi \epsilon
_{k}2^{2k-2N}\right) \\
&&+\sum_{l_{1}\leq k<j\leq N-1}\frac{1}{k^{\frac{1}{2}}j^{\frac{1}{2}}2^{k+j}%
}\sum_{l=0}^{2^{2N}-1}\left( \sin \left( l\left( c_{k}^{N}-c_{j}^{N}\right)
+c_{k}^{N}\right) +\sin \left( l\left( c_{j}^{N}-c_{k}^{N}\right)
+c_{j}^{N}\right) \right)
\end{eqnarray*}%
Since 
\begin{equation*}
\sum_{l=0}^{2^{2N}-1}\sin \left( l\left( c_{k}^{N}-c_{j}^{N}\right)
+c_{k}^{N}\right) =\sum_{l=0}^{2^{2N}-1}\sin \left( l\left(
c_{j}^{N}-c_{k}^{N}\right) +c_{j}^{N}\right) =0,
\end{equation*}%
so%
\begin{eqnarray*}
\left\langle f,D_{N}\right\rangle &=&\sum_{k=l_{1}}^{N-1}\frac{1}{k2^{2k-2N}}%
\sin \left( 2\pi \epsilon _{k}2^{2k-2N}\right) \\
&=&:\sum_{j=1}^{J-1}\left( -1\right) ^{j}s_{j}^{N}+\left( -1\right)
^{J}\sum_{k=l_{J}}^{N-1}\frac{1}{k2^{2k-2N}}\sin \left( 2\pi
2^{2k-2N}\right) .
\end{eqnarray*}%
where $l_{J}+1\leq N\leq l_{J+1}$, and%
\begin{equation*}
s_{j}^{N}:=\sum_{k=l_{j}}^{l_{j+1}-1}\frac{1}{k2^{2k-2N}}\sin \left( 2\pi
2^{2k-2N}\right) \text{, }1\leq j\leq J-1\text{.}
\end{equation*}%
Using $\frac{2}{\pi }\theta \leq \sin \theta \leq \theta $ when $\theta \in %
\left[ 0,\frac{\pi }{2}\right] $ and condition $\left( \ref{condition on ln}%
\right) $, we have, for any $j\geq 1$, and any $N\geq l_{j+1}$, 
\begin{equation*}
4\times c^{j}\leq s_{j}^{N}\leq 2\pi \times \left( c^{j}+1\right) .
\end{equation*}%
Thus using $s_{j}^{N}-s_{j-1}^{N}\geq \left( 4c-2\pi \right) c^{j-1}-2\pi $,
we estimate $\sum_{j=1}^{m-1}\left( -1\right) ^{j}s_{j}^{N}$. When $m$ is
even and $m\geq 4$, for any $N\geq l_{m}$,%
\begin{eqnarray}
\sum_{j=1}^{m-1}\left( -1\right) ^{j}s_{j}^{N} &=&-\left(
s_{m-1}^{N}-s_{m-2}^{N}\right) -\dots -s_{1}^{N}
\label{lower limit infinity} \\
&\leq &-\frac{4c-2\pi }{c^{2}-1}\left( c^{m}-c^{2}\right) +\pi \left(
m-2\right) -4c.  \notag
\end{eqnarray}%
Similarly, when $m$ is odd and $m\geq 5$, for any $N\geq l_{m}$, 
\begin{eqnarray}
\sum_{j=1}^{m-1}\left( -1\right) ^{j}s_{j}^{N} &=&\left(
s_{m-1}^{N}-s_{m-2}^{N}\right) +\dots +(s_{2}^{N}-s_{1}^{N})
\label{upper limit infinity} \\
&\geq &\frac{4c-2\pi }{c^{2}-1}\left( c^{m}-c\right) -\pi \left( m-1\right) ;
\notag
\end{eqnarray}%
and when $m$ is odd and $m\geq 5$, for any $N\geq l_{m}$, the upper bound:%
\begin{eqnarray}
\sum_{j=1}^{m-1}\left( -1\right) ^{j}s_{j}^{N} &=&s_{m-1}^{N}-\left(
s_{m-2}^{N}-s_{m-3}^{N}\right) -\dots -s_{1}^{N}  \label{lower upper limit}
\\
&\leq &2\pi \times \left( c^{m-1}+1\right) -\frac{4c-2\pi }{c^{2}-1}\left(
c^{m-1}-c^{2}\right) +\pi \left( m-3\right) -4c  \notag \\
&=&\left( \frac{2\pi }{c}-\frac{4c-2\pi }{c\left( c^{2}-1\right) }\right)
c^{m}+\pi \left( m-1\right) +\frac{4c-2\pi }{c^{2}-1}c^{2}-4c.  \notag
\end{eqnarray}

Since we assumed $c>\pi $, so in $\left( \ref{lower limit infinity}\right) $
and $\left( \ref{upper limit infinity}\right) $, $\frac{4c-2\pi }{c^{2}-1}>0$%
. On the other hand, since $\left\langle f,D_{l_{m}}\right\rangle
=\sum_{j=1}^{m-1}\left( -1\right) ^{j}s_{j}^{l_{m}}$, so based on $\left( %
\ref{lower limit infinity}\right) $ and $\left( \ref{upper limit infinity}%
\right) $, we have%
\begin{equation*}
\lim_{n\rightarrow \infty }\left\langle f,D_{l_{2n}}\right\rangle =-\infty 
\text{ and }\lim_{n\rightarrow \infty }\left\langle
f,D_{l_{2n+1}}\right\rangle =+\infty \text{.}
\end{equation*}%
Thus, if when $a=+\infty $ let $D_{n}^{a}:=D_{l_{2n+1}}$, when $a=-\infty $
let $D_{n}^{a}:=D_{l_{2n}}$, then when $a=+\infty $ or $-\infty $, we have $%
\lim_{n\rightarrow \infty }\left\vert D_{n}^{a}\right\vert =0$, and $%
\lim_{n\rightarrow \infty }\left\langle f,D_{n}^{a}\right\rangle =a$.

Fix $a\in \left( -\infty ,\infty \right) $.

Firstly, we assumed $c>\pi $, so 
\begin{equation*}
0<\frac{2\pi }{c}-\frac{4c-2\pi }{c\left( c^{2}-1\right) }<\frac{2\pi }{c}<2%
\text{.}
\end{equation*}%
For our fixed $c>\pi $, choose integer $M_{c}\geq 1$, s.t. for any $m\geq
M_{c}$,%
\begin{equation*}
\left( \frac{2\pi }{c}-\frac{4c-2\pi }{c\left( c^{2}-1\right) }\right)
c^{m}+\pi \left( m-1\right) +\frac{4c-2\pi }{c^{2}-1}c^{2}-4c\leq 2c^{m}.
\end{equation*}%
Thus, combined with $\left( \ref{lower upper limit}\right) $, when $m$ is
odd and $m\geq 5\vee M_{c}$, for any $N\geq l_{m}$, we have%
\begin{equation}
\sum_{j=1}^{m-1}\left( -1\right) ^{j}s_{j}^{N}\leq 2c^{m}\text{.}
\label{upper bound for s_j^N}
\end{equation}%
Then for our fixed $a\in (-\infty ,\infty )$, choose odd integer $M\left(
a\right) \geq 5\vee M_{c}$ such that, for any odd integer $m\geq M\left(
a\right) $, and any $N\geq l_{m}$, we have 
\begin{equation}
\sum_{j=1}^{m-1}\left( -1\right) ^{j}s_{j}^{N}>\left\vert a\right\vert
+10\pi \text{,}  \label{lower bound for s_j^N}
\end{equation}%
which is possible because of $\left( \ref{upper limit infinity}\right) $.

We prove that for any odd integer $m\geq M\left( a\right) $, there exists $%
N_{m}\left( a\right) $, $l_{m}<N_{m}\left( a\right) <l_{m+1}$, s.t. 
\begin{equation*}
\left\vert \left\langle f,D_{N_{m}\left( a\right) }\right\rangle
-a\right\vert \leq \frac{\pi }{l_{m}}\text{.}
\end{equation*}%
Fix odd integer $m\geq M\left( a\right) $. For any $N\geq l_{m}$ (use $%
c^{m}\leq \sum_{k=l_{m}}^{l_{m+1}-1}k^{-1}$, i.e.$\left( \ref{condition on
ln}\right) $),%
\begin{equation}
\left\vert a\right\vert +10\pi <\sum_{j=1}^{m-1}\left( -1\right)
^{j}s_{j}^{N}\leq 2c^{m}\leq 2\sum_{k=l_{m}}^{l_{m+1}-1}k^{-1}.
\label{inner bound M(a)}
\end{equation}%
Thus, when $N=l_{m+1}$ in $\left( \ref{inner bound M(a)}\right) $, we have 
\begin{align}
\left\langle f,D_{l_{m+1}}\right\rangle & =\sum_{j=1}^{m-1}\left( -1\right)
^{j}s_{j}^{l_{m+1}}-\sum_{k=l_{m}}^{l_{m+1}-1}\frac{\sin \left( 2\pi
2^{2k-2l_{m+1}}\right) }{k2^{2k-2l_{m+1}}}  \label{inner D_l_M+1} \\
& \leq \sum_{j=1}^{m-1}\left( -1\right)
^{j}s_{j}^{l_{m+1}}-4\sum_{k=l_{m}}^{l_{m+1}-1}k^{-1}  \notag \\
& \leq -2\sum_{k=l_{m}}^{l_{m+1}-1}k^{-1}<-\left\vert a\right\vert -10\pi 
\text{ .}  \notag
\end{align}%
While in $\left( \ref{inner bound M(a)}\right) $ let $N=l_{m}$, we have%
\begin{equation}
\left\langle f,D_{l_{m}}\right\rangle =\sum_{j=1}^{m-1}\left( -1\right)
^{j}s_{j}^{m}>\left\vert a\right\vert +10\pi \text{.}  \label{inner D_l_M}
\end{equation}%
Combine $\left( \ref{inner D_l_M+1}\right) $ with $\left( \ref{inner D_l_M}%
\right) $, if $\left\vert \left\langle f,D_{N}\right\rangle -\left\langle
f,D_{N+1}\right\rangle \right\vert $ is uniformly small when $l_{m}\leq
N\leq l_{m+1}-1$, then $\exists N_{m}\left( a\right) $, $l_{m}\leq
N_{1}\left( a\right) \leq l_{m+1}$, s.t. $\left\langle f,D_{N_{m}\left(
a\right) }\right\rangle $ is in the neighborhood of $a$.

Actually, for any $N\geq l_{1}+1$, 
\begin{eqnarray*}
&&\left\vert \left\langle f,D_{N+1}\right\rangle -\left\langle
f,D_{N}\right\rangle \right\vert \\
&\leq &\left\vert \sum_{k=l_{1}}^{N-1}\frac{1}{k2^{2k-2\left( N+1\right) }}%
\left( \frac{\sin \left( 2\pi 2^{2k-2N}\right) }{4}-\sin \left( 2\pi
2^{2k-2\left( N+1\right) }\right) \right) \right\vert +\frac{4}{N}.
\end{eqnarray*}%
For any $\theta \in \left[ 0,\frac{\pi }{2}\right] $, using $\sin \left(
2\theta \right) =2\sin \theta \cos \theta $, we have%
\begin{gather*}
\frac{1}{\theta }\left\vert \frac{\sin \left( 4\theta \right) }{4}-\sin
\theta \right\vert =\frac{\sin \theta }{\theta }\left\vert \cos \theta \cos
2\theta -1\right\vert \\
\leq \left\vert \left( 1-2\sin ^{2}\frac{\theta }{2}\right) \left( 1-2\sin
^{2}\theta \right) -1\right\vert \leq 14\sin ^{2}\frac{\theta }{2}\leq \frac{%
7}{2}\theta ^{2}.
\end{gather*}%
Thus let $\theta =2\pi 2^{2k-2\left( N+1\right) }$, we have%
\begin{equation*}
\frac{1}{2^{2k-2\left( N+1\right) }}\left\vert \frac{\sin \left( 2\pi
2^{2k-2N}\right) }{4}-\sin \left( 2\pi 2^{2k-2\left( N+1\right) }\right)
\right\vert \leq 28\pi ^{3}\left( \frac{1}{2^{2\left( N+1\right) -2k}}%
\right) ^{2}.
\end{equation*}%
Thus, when $l_{m}\leq N\leq l_{m+1}-1$, 
\begin{equation}
\left\vert \left\langle f,D_{N+1}\right\rangle -\left\langle
f,D_{N}\right\rangle \right\vert \leq 28\pi ^{3}\sum_{k=l_{1}}^{N-1}\frac{1}{%
k}\left( \frac{1}{2^{2\left( N+1\right) -2k}}\right) ^{2}+\frac{4}{N}.
\label{inner2}
\end{equation}%
While one can prove that for any $m\geq 2$, $\sum_{k=1}^{m-1}\frac{2^{4k}}{k}%
\leq \frac{2^{4m}}{m}$ by using mathematical induction, so for any $N\geq
l_{1}+1$, 
\begin{equation}
\sum_{k=l_{1}}^{N-1}\frac{1}{k}\left( \frac{1}{2^{2\left( N+1\right) -2k}}%
\right) ^{2}\leq \frac{1}{2^{4N+4}}\sum_{k=1}^{N-1}\frac{2^{4k}}{k}\leq 
\frac{1}{16N}.  \label{inner1}
\end{equation}

Then, combined $\left( \ref{inner2}\right) $ with $\left( \ref{inner1}%
\right) $, we get when $l_{m}\leq N\leq l_{m+1}-1$, 
\begin{equation*}
\left\vert \left\langle f,D_{N+1}\right\rangle -\left\langle
f,D_{N}\right\rangle \right\vert \leq \left( \frac{7}{4}\pi ^{3}+4\right) 
\frac{1}{N}<\frac{20\pi }{l_{m}}.
\end{equation*}%
Thus, combined with $\left( \ref{inner D_l_M+1}\right) $ and $\left( \ref%
{inner D_l_M}\right) $, there exists integer $N_{m}\left( a\right) $, $%
l_{m}\leq N_{m}\left( a\right) \leq l_{m+1}$, s.t.%
\begin{equation*}
\left\vert \left\langle f,D_{N_{m}\left( a\right) }\right\rangle
-a\right\vert <\frac{10\pi }{l_{m}}.
\end{equation*}%
Moreover, since $\left\langle f,D_{l_{m}}\right\rangle >\left\vert
a\right\vert +10\pi \geq \left\vert a\right\vert +\frac{10\pi }{l_{m}}$, $%
\left\langle f,D_{l_{m+1}}\right\rangle <-\left\vert a\right\vert -10\pi
\leq -\left\vert a\right\vert -\frac{10\pi }{l_{m}}$, so $l_{m}<N_{m}\left(
a\right) <l_{m+1}$.

Therefore, if let $D_{m}^{a}:=D_{N_{m}\left( a\right) }$, $m\geq 1$, then $%
\left\{ D_{m}^{a}\right\} _{m}$ is a sequence of finite partitions, whose
mesh tends to zero, but the limit of the corresponding Riemann sum is $a$.
\end{proof}

Next, we demonstrate that when the space of smooth paths is equipped with $2$%
-variation, the area operator is unbounded, and non-closable when the area
is equipped with $p$-variation, $p>1$.

\begin{lemma}
\label{triangle converge in p-variation}Suppose $\left\{ l_{n}\right\} _{n}$
is a sequence of strictly increasing integers. Then%
\begin{equation*}
\lim_{n\rightarrow \infty }\left\Vert \sum_{k=l_{n}}^{l_{n+1}-1}\frac{1}{%
k2^{2k}}\sin \left( 2\pi 2^{2k}\left( t-s\right) \right) \right\Vert _{p-var,%
\left[ 0,1\right] }=0\text{ for any }p>1\text{.}
\end{equation*}
\end{lemma}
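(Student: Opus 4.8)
The plan is to recognise the displayed expression as a special case of the function $g_{N_{1},N_{2}}^{a,p}$ from Lemma~\ref{Lemma, decay of 2-var} and then to quote part~$(iii)$ of that lemma. Write $\alpha_{n}\left( s,t\right) :=\sum_{k=l_{n}}^{l_{n+1}-1}\frac{1}{k2^{2k}}\sin \left( 2\pi 2^{2k}\left( t-s\right) \right) $; this function depends only on $t-s$, is continuous on $\bigtriangleup _{\left[ 0,1\right] }$ and vanishes on the diagonal, so its $p$-variation is well defined and is exactly the quantity to be estimated. The only discrepancy between $\alpha_{n}$ and $g_{N_{1},N_{2}}^{a,p}$ is the size of the coefficients: Lemma~\ref{Lemma, decay of 2-var} carries a factor $2^{-2k/p}$, whereas $\alpha_{n}$ carries the smaller factor $2^{-2k}$.

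Fix $p>1$. The idea is to absorb the surplus decay $2^{-2k}/2^{-2k/p}=2^{-2k\left( 1-1/p\right) }$ into the bounded profile rather than into the exponent. Accordingly, take $a=1$ and, for every integer $k\geq 1$, set $\varphi_{k}\left( s,t\right) :=2^{-2k\left( 1-\frac{1}{p}\right) }\sin \left( 2\pi \left( t-s\right) \right) $. Since $1-\frac{1}{p}>0$ and $\left\vert \sin \theta \right\vert \leq 1\wedge \left\vert \theta \right\vert $, one checks that $\left\Vert \varphi_{k}\left( s,t\right) \right\Vert \leq 2\pi \left( 1\wedge \left\vert t-s\right\vert \right) $ for every $k$, so the hypothesis of Lemma~\ref{Lemma, decay of 2-var} holds with the single constant $M=2\pi$. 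A direct substitution then gives $\varphi_{k}\left( 2^{2k}s,2^{2k}t\right) =2^{-2k\left( 1-\frac{1}{p}\right) }\sin \left( 2\pi 2^{2k}\left( t-s\right) \right) $ together with $\frac{1}{k2^{2k/p}}2^{-2k\left( 1-\frac{1}{p}\right) }=\frac{1}{k2^{2k}}$, whence $\alpha_{n}=g_{l_{n},\,l_{n+1}-1}^{1,p}$.

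With this identification, part~$(iii)$ of Lemma~\ref{Lemma, decay of 2-var} (with $a=1$, $M=2\pi$, $N_{1}=l_{n}$ and $N_{2}=l_{n+1}-1$) yields $\left\Vert \alpha_{n}\right\Vert _{p-var,\left[ 0,1\right] }\leq \widetilde{C_{1,p,2\pi}}/l_{n}$. Since $\left\{ l_{n}\right\} $ is strictly increasing in the integers we have $l_{n}\rightarrow \infty $, so the right-hand side tends to $0$; as $p>1$ was arbitrary, the claim follows. I expect the only delicate point to be the bookkeeping of the previous paragraph: one must verify simultaneously that the residual factor $2^{-2k\left( 1-1/p\right) }$ reconstitutes the coefficient $\left( k2^{2k}\right) ^{-1}$ and that, being at most $1$, it leaves the uniform bound $M=2\pi$ intact. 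It is also worth noting that $\varphi_{k}$ must be read as defined for all admissible arguments $\left( 2^{2k}s,2^{2k}t\right) $ rather than only on $\bigtriangleup _{\left[ 0,1\right] }$, which is precisely the regime where the truncation $1\wedge \left\vert t-s\right\vert $ in the hypothesis is active; everything else is a direct quotation of Lemma~\ref{Lemma, decay of 2-var}.
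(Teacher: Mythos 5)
Your proof is correct, but it takes a different route from the paper's. The paper proves Lemma \ref{triangle converge in p-variation} by a self-contained direct estimate: it sorts the lengths $\left\vert t_{j+1}-t_{j}\right\vert$ of a partition into dyadic classes $I_{m}=(2^{-2p\left( m+1\right) },2^{-2pm}]$ and $I_{l_{n}+}$, bounds the number of subintervals in each class, and sums, arriving at $\left\Vert \cdot \right\Vert _{p-var}^{p}\leq 2^{3p}\left( \left( 2\pi \right) ^{p}+3^{-p}\right) l_{n}^{-p}$. You instead absorb the surplus decay $2^{-2k\left( 1-\frac{1}{p}\right) }$ into the profile $\varphi _{k}\left( s,t\right) =2^{-2k\left( 1-\frac{1}{p}\right) }\sin \left( 2\pi \left( t-s\right) \right) $ so that the sum becomes $g_{l_{n},l_{n+1}-1}^{1,p}$ with the uniform bound $M=2\pi $, and then quote $\left( \ref{inequality 2-var tends to zero}\right) $ of Lemma \ref{Lemma, decay of 2-var}; the bookkeeping checks out (the exponents recombine to $\left( k2^{2k}\right) ^{-1}$, the $p$-dependence of $\varphi _{k}$ is harmless since $\widetilde{C_{1,p,2\pi }}$ depends only on $\left( a,p,M\right) $ and not on $n$, and your reading of the domain of $\varphi _{k}$ matches how the paper itself uses that lemma). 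The two arguments rest on the same underlying counting technique, since part $(iii)$ of Lemma \ref{Lemma, decay of 2-var} is itself proved by the dyadic classification; your version is shorter and is exactly the style of citation the paper uses a few lines later in the proof of Lemma \ref{Lemma 1-var convergence} for the term $\sum_{i}i^{-\frac{1}{2}}2^{-\frac{2}{p}i}\left\vert \sin \left( 2\pi 2^{2i}\frac{t-s}{2}\right) \right\vert $, while the paper's direct computation buys an explicit constant and avoids routing a $p$-dependent profile through the general lemma. Both yield the same rate $O\left( l_{n}^{-1}\right) $ for the $p$-variation norm.
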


\begin{proof}
We do estimation for fixed $p>1$ and fixed sufficiently large $n$.

For integer $m\geq l_{n}$, denote $I_{m}$ $:=(2^{-2p\left( m+1\right)
},2^{-2pm}]$, and denote $I_{l_{n}+}:=(2^{-2pl_{n}},1]$. Suppose $D=\left\{
t_{j}\right\} $ is a finite partition satisfying that $\left\{ \left\vert
t_{j+1}-t_{j}\right\vert \right\} _{j}\subset \cup _{i=1}^{s}I_{m_{i}}\cup
I_{l_{n}+}$ with $\min_{1\leq i\leq s}m_{i}\geq l_{n}$. Denote $%
J_{m_{i}}:=\left\{ j|t_{j+1}-t_{j}\in I_{m_{i}}\right\} $ and $%
J_{l_{n}+}:=\left\{ j|t_{j+1}-t_{j}\in I_{l_{n}+}\right\} $. We assume that $%
J_{m_{i}}$ is not empty for each $i$. For $J_{l_{n}+}$, since we can not
have more than $2^{2pl_{n}+1}\sum_{j,j\in J_{l_{n}+}}\left(
t_{j+1}-t_{j}\right) $ intervals in $J_{l_{n}+}$, so%
\begin{eqnarray}
&&\sum_{j,j\in J_{l_{n}+}}\left( \sum_{k=l_{n}}^{l_{n+1}-1}\frac{1}{k2^{2k}}%
\sin \left( 2\pi 2^{2k}\left( t_{j+1}-t_{j}\right) \right) \right) ^{p}
\label{estimation for large interval} \\
&\leq &2^{2pl_{n}+1}\left( \sum_{k=l_{n}}^{l_{n+1}-1}\frac{1}{k2^{2k}}%
\right) ^{p}\sum_{j,j\in J_{l_{n}+}}\left( t_{j+1}-t_{j}\right) \leq \frac{%
2^{2p+1}}{3^{p}l_{n}^{p}}\sum_{j,j\in J_{l_{n}+}}\left( t_{j+1}-t_{j}\right)
.  \notag
\end{eqnarray}%
Then we do estimation for fixed $i$, $i=1,2,\dots ,s$. Suppose $%
t_{j+1}-t_{j}\in I_{m_{i}}$, then%
\begin{eqnarray*}
&&\left( \sum_{k=l_{n}}^{l_{n+1}-1}\frac{1}{k2^{2k}}\sin \left( 2\pi
2^{2k}\left( t_{j+1}-t_{j}\right) \right) \right) ^{p} \\
&\leq &2^{p-1}\left( \left( 2\pi \sum_{k=l_{n}}^{m_{i}}\frac{1}{k}\right)
^{p}\left\vert t_{j+1}-t_{j}\right\vert ^{p}+\left( \sum_{k=m_{i}+1}^{\infty
}\frac{1}{k2^{2k}}\right) ^{p}\right) \\
&\leq &2^{p-1}\left( \left( 2\pi \left( 1+\ln m_{i}\right) \right)
^{p}\left( \frac{1}{2^{2p^{2}m_{i}}}\right) +\frac{1}{%
3^{p}m_{i}^{p}2^{2pm_{i}}}\right) .
\end{eqnarray*}%
Since there can not be more than $2\times 2^{2p\left( m_{i}+1\right)
}\sum_{j,j\in J_{m_{i}}}\left( t_{j+1}-t_{j}\right) $ many intervals whose
length fail into the category $I_{m_{i}}$, so 
\begin{eqnarray}
&&\sum_{j,j\in J_{m_{i}}}\left( \sum_{k=l_{n}}^{l_{n+1}-1}\frac{1}{k2^{2k}}%
\sin \left( 2\pi 2^{2k}\left( t_{j+1}-t_{j}\right) \right) \right) ^{p}
\label{estimation for small intervals} \\
&\leq &2^{p-1}\left( \left( 2\pi \right) ^{p}\frac{\left( 1+\ln m_{i}\right)
^{p}}{2^{2p^{2}m_{i}}}+\frac{1}{3^{p}m_{i}^{p}2^{2pm_{i}}}\right) \times
2^{2p\left( m_{i}+1\right) +1}\sum_{j,j\in J_{m_{i}}}\left(
t_{j+1}-t_{j}\right)  \notag \\
&\leq &2^{3p}\left( \left( 2\pi \right) ^{p}\frac{\left( 1+\ln m_{i}\right)
^{p}}{2^{2p\left( p-1\right) m_{i}}}+\frac{1}{3^{p}m_{i}^{p}}\right)
\sum_{j,j\in J_{m_{i}}}\left( t_{j+1}-t_{j}\right) .  \notag
\end{eqnarray}%
Since $\left\{ l_{n}\right\} $ are strictly increasing integers, so $%
\lim_{n\rightarrow \infty }l_{n}=+\infty $. Thus, for our fixed $p>1$, there
exists $N\left( p\right) \geq 1$, s.t. for any $n\geq N\left( p\right) $ and
any $m_{i}\geq l_{n}$, we have 
\begin{equation*}
\frac{\left( 1+\ln m_{i}\right) ^{p}}{2^{2p\left( p-1\right) m_{i}}}\leq 
\frac{1}{m_{i}^{p}}.
\end{equation*}%
Therefore, for any fixed finite partition $D=\left\{ t_{j}\right\} $ of $%
\left[ 0,1\right] $, when $n\geq N\left( p\right) $, we have (using $\left( %
\ref{estimation for large interval}\right) $, $\left( \ref{estimation for
small intervals}\right) $ and $\sum_{i=1}^{s}\sum_{j\in J_{m_{i}}}\left(
t_{j+1}-t_{j}\right) +\sum_{j\in J_{l_{n}+}}\left( t_{j+1}-t_{j}\right) =1$, 
$\min_{1\leq i\leq s}m_{i}\geq l_{n}$) 
\begin{eqnarray*}
&&\sum_{j,t_{j}\in D}\left( \sum_{k=l_{n}}^{l_{n+1}-1}\frac{1}{k2^{2k}}\sin
\left( 2\pi 2^{2k}\left( t_{j+1}-t_{j}\right) \right) \right) ^{p} \\
&\leq &2^{3p}\left( \left( 2\pi \right) ^{p}+\frac{1}{3^{p}}\right) \left(
\sum_{i=1}^{s}\frac{1}{m_{i}^{p}}\sum_{j,j\in J_{m_{i}}}\left(
t_{j+1}-t_{j}\right) \right) +\frac{2^{2p+1}}{3^{p}l_{n}^{p}}\sum_{j,j\in
J_{l_{n}+}}\left( t_{j+1}-t_{j}\right) \\
&\leq &2^{3p}\left( \left( 2\pi \right) ^{p}+\frac{1}{3^{p}}\right) \frac{1}{%
l_{n}^{p}}\text{. }
\end{eqnarray*}%
Hence, for any fixed $p>1$, there exists integer $N\left( p\right) $, s.t.
for any $n\geq N\left( p\right) $, 
\begin{equation*}
\left\Vert \sum_{k=l_{n}}^{l_{n+1}-1}\frac{1}{k2^{2k}}\sin \left( 2\pi
2^{2k}\left( t-s\right) \right) \right\Vert _{p-var,\left[ 0,1\right]
}^{p}\leq 2^{3p}\left( \left( 2\pi \right) ^{p}+\frac{1}{3^{p}}\right) \frac{%
1}{l_{n}^{p}}\text{.}
\end{equation*}%
Proof finishes.
\end{proof}

\begin{lemma}
\label{Lemma 1-var convergence}Suppose $\left\{ l_{n}\right\} _{n}$ is a
sequence of strictly increasing integers. Define%
\begin{equation*}
g_{n}\left( t\right) =\sum_{k=l_{n}}^{l_{n+1}-1}\frac{1}{k^{\frac{1}{2}}2^{k}%
}\exp \left( 2\pi i2^{2k}t\right) \text{, }t\in \left[ 0,1\right] .
\end{equation*}%
Then $\lim_{n\rightarrow \infty }\left\Vert g_{n}\right\Vert _{2-var}=0$,
and for any $p>1$,%
\begin{equation*}
\lim_{n\rightarrow \infty }\left\Vert A\left( g_{n}\right) \left( s,t\right)
-\left( \pi \sum_{k=l_{n}}^{l_{n+1}-1}\frac{1}{k}\right) \left( t-s\right)
\right\Vert _{p-var,\left[ 0,1\right] }=0.
\end{equation*}
\end{lemma}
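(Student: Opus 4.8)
The plan is to treat $g_n$ as a finite sum of pure Fourier modes and exploit the bilinearity of the area map. Write $\gamma^{(k)}(t):=k^{-1/2}2^{-k}\exp(2\pi i2^{2k}t)$, and set $a_k:=k^{-1/2}2^{-k}$, $\omega_k:=2\pi2^{2k}$, so that $g_n=\sum_{k=l_n}^{l_{n+1}-1}\gamma^{(k)}$. The first claim is immediate: the increment $g_n(t)-g_n(s)$ is exactly $g^{1/2,2}_{l_n,l_{n+1}-1}(s,t)$ in the notation of Lemma \ref{Lemma, decay of 2-var} (take $\varphi_k(s',t')=e^{2\pi it'}-e^{2\pi is'}$, which satisfies the required $M(1\wedge|t'-s'|)$ bound with $M=2\pi$), so part $(iii)$ of that lemma gives $\|g_n\|_{2\text{-var}}\le\widetilde{C}\,l_n^{-1/2}\to0$.

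For the area I would use the complex form of the scalar L\'evy area, $A(\gamma_1,\gamma_2)(s,t)=\tfrac12\,\mathrm{Im}\int_s^t\overline{(\gamma_1(u)-\gamma_1(s))}\,d\gamma_2(u)$, matching the pairing $[a,b]=\mathrm{Im}(\bar ab)$ already used in Example \ref{Example non-existence of Reimann integral}. Since each $\gamma^{(k)}$ is smooth and the iterated integral is bilinear, $A(g_n)=A(g_n,g_n)=\sum_{k,j}A(\gamma^{(k)},\gamma^{(j)})$. A direct Riemann--Stieltjes computation of the single-mode (diagonal) area gives
\[
A(\gamma^{(k)})(s,t)=\tfrac12 a_k^2\omega_k\Big((t-s)-\tfrac{\sin(\omega_k(t-s))}{\omega_k}\Big)=\tfrac{\pi}{k}(t-s)-\tfrac{1}{2k2^{2k}}\sin\!\big(2\pi2^{2k}(t-s)\big),
\]
using $\tfrac12 a_k^2\omega_k=\pi/k$. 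Summing over $l_n\le k\le l_{n+1}-1$ yields exactly the asserted main term $\big(\pi\sum_k k^{-1}\big)(t-s)$ plus $-\tfrac12\sum_k(k2^{2k})^{-1}\sin(2\pi2^{2k}(t-s))$, whose $p$-variation tends to $0$ by Lemma \ref{triangle converge in p-variation}. So the diagonal produces the main term modulo a $p$-variation-negligible remainder.

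The remaining task, which I expect to be the main obstacle, is to control the off-diagonal sum $\mathrm{Cross}(s,t):=\sum_{k\ne j}A(\gamma^{(k)},\gamma^{(j)})(s,t)$ in $p$-variation. Computing the bilinear area explicitly gives, with $\nu:=\omega_j-\omega_k$ and $\delta:=t-s$,
\[
A(\gamma^{(k)},\gamma^{(j)})(s,t)=\tfrac12\,\mathrm{Im}\Big\{a_ka_j\,e^{i\nu s}\big[\tfrac{\omega_j}{\nu}e^{i\nu\delta}-\tfrac{\omega_k}{\nu}-e^{i\omega_j\delta}\big]\Big\}.
\]
The bracket $B(\delta)$ satisfies $B(0)=B'(0)=0$ and $B''(0)=\omega_j\omega_k$, so it vanishes to second order at the diagonal; combining $|B(\delta)|\le\omega_j\int_0^\delta(2\wedge\omega_k\sigma)\,d\sigma\le\tfrac12\omega_j\omega_k\delta^2$ with the uniform bound $|B(\delta)|\le 8/3$ (valid because the lacunarity $\omega_{k+1}/\omega_k=4$ forces $|\nu|\ge\tfrac34\max(\omega_k,\omega_j)$) yields $|A(\gamma^{(k)},\gamma^{(j)})(s,t)|\le C\,a_ka_j\min\big(1,\omega_j\omega_k(t-s)^2\big)$. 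The decisive feature is the \emph{quadratic} vanishing at the diagonal: splitting any partition into intervals shorter or longer than $(\omega_j\omega_k)^{-1/2}$ and summing $p$-th powers of this profile gives $\|A(\gamma^{(k)},\gamma^{(j)})\|_{p\text{-var}}\le C\,a_ka_j(\omega_j\omega_k)^{1/(2p)}$.

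Finally I would sum over $k\ne j$. Because $a_ka_j(\omega_j\omega_k)^{1/(2p)}\lesssim k^{-1/2}j^{-1/2}2^{(k+j)(1/p-1)}$ and $1/p-1<0$ for every $p>1$, the triangle inequality for $\|\cdot\|_{p\text{-var}}$ bounds $\|\mathrm{Cross}\|_{p\text{-var}}$ by $\big(\sum_{k\ge l_n}k^{-1/2}2^{k(1/p-1)}\big)^2\lesssim l_n^{-1}2^{2l_n(1/p-1)}\to0$, using only that $l_n\to\infty$. This estimate holds for every $p>1$ and is insensitive to how fast $l_{n+1}$ grows relative to $l_n$, which is exactly why the quadratic (not merely linear) diagonal vanishing is essential: a bound using the larger frequency as the linear rate would blow up for $1<p<2$ when $l_{n+1}\gg l_n$. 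Adding the diagonal remainder and $\mathrm{Cross}$ gives $\|A(g_n)(s,t)-(\pi\sum_k k^{-1})(t-s)\|_{p\text{-var}}\to0$ for all $p>1$, which completes the proof.
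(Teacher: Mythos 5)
Your proposal is correct, and the overall architecture (reduce $\|g_n\|_{2-var}\to 0$ to Lemma \ref{Lemma, decay of 2-var}(iii); split $A(g_n)=\sum_{k,j}A(\gamma^{(k)},\gamma^{(j)})$ by bilinearity; extract the main term and the $p$-variation-negligible $\sin$ remainder from the diagonal, the latter via Lemma \ref{triangle converge in p-variation}) coincides with the paper's, whose $p_n+q_n$ decomposition is just your frequency-pair decomposition written out componentwise. Where you genuinely diverge is the off-diagonal estimate. The paper combines $p_{i,j}+q_{i,j}$ through product-to-sum identities and the telescoping relation $\sin\theta\prod_{k}\cos(2^{k}\theta)=2^{-n}\sin(2^{n}\theta)$ to obtain the pointwise bound $\frac{32}{3}\bigl\vert\sin\bigl(2\pi 2^{2i}\frac{t-s}{2}\bigr)\bigr\vert$ for $i<j$ --- i.e. a \emph{linear} diagonal vanishing at the rate of the \emph{smaller} frequency, uniformly in $j$ --- and then feeds this back into Lemma \ref{Lemma, decay of 2-var}(iii) to sum in $p$-variation. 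You instead prove the two-sided bound $\vert A(\gamma^{(k)},\gamma^{(j)})\vert\le C a_k a_j\min\bigl(1,\omega_k\omega_j(t-s)^2\bigr)$ (quadratic vanishing at the geometric-mean rate, with the uniform bound coming from the lacunarity $\vert\nu\vert\ge\frac34\max(\omega_k,\omega_j)$), and convert it to $\|\cdot\|_{p-var}\le C a_ka_j(\omega_k\omega_j)^{1/(2p)}$ by a direct short/long interval count; I checked the computation $B(0)=B'(0)=0$, $\vert B'(\sigma)\vert\le\omega_j\min(2,\omega_k\sigma)$, and the resulting geometric series, and they are all sound. Per term the paper's bound $\omega_{\min}^{1/p}$ is slightly sharper than your $(\omega_k\omega_j)^{1/(2p)}$, but both sums converge to zero for every $p>1$ regardless of how large $l_{n+1}$ is relative to $l_n$; your argument is more robust and transparent (it needs only that consecutive frequency ratios are bounded away from $1$, with no trigonometric identities), while the paper's stays entirely within the machinery of Lemma \ref{Lemma, decay of 2-var}. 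Your observation that a naive linear bound at the \emph{larger} frequency would fail for $1<p<2$ correctly identifies the point where care is needed.
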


\begin{proof}
Since trigonometric functions are Lipschitz and bounded, so according to $%
\left( \ref{inequality 2-var tends to zero}\right) $ in Lemma \ref{Lemma,
decay of 2-var} with $p=2$, $\lim_{n\rightarrow \infty }\left\Vert
g_{n}\right\Vert _{2-var,\left[ 0,1\right] }=0$.

According to the definition of area, if denote $x_{n}:=\func{Re}g_{n}$, $%
y_{n}:=\func{Im}g_{n}$, and%
\begin{eqnarray*}
p_{n}\left( s,t\right) &:&=\int_{s}^{t}x_{n}\left( u\right) dy_{n}\left(
u\right) -y_{n}\left( u\right) dx_{n}\left( u\right) , \\
\text{ }q_{n}\left( s,t\right) &:&=y_{n}\left( s\right) x_{n}\left( t\right)
-x_{n}\left( s\right) y_{n}\left( t\right) ,
\end{eqnarray*}%
we have 
\begin{equation*}
A\left( g_{n}\right) \left( s,t\right) =\frac{1}{2}\left( p_{n}\left(
s,t\right) +q_{n}\left( s,t\right) \right) .
\end{equation*}%
Firstly, for $p_{n}\left( s,t\right) $, 
\begin{eqnarray*}
p_{n}\left( s,t\right) &=&2\pi \int_{s}^{t}\left( \sum_{i=l_{n}}^{l_{n+1}-1}%
\frac{1}{i^{\frac{1}{2}}2^{i}}\cos \left( 2\pi 2^{2i}u\right) \right) \left(
\sum_{j=l_{n}}^{l_{n+1}-1}\frac{2^{j}}{j^{\frac{1}{2}}}\cos \left( 2\pi
2^{2j}u\right) \right) \\
&&\text{ \ \ \ \ \ \ \ \ \ \ \ \ }+\left( \sum_{j=l_{n}}^{l_{n+1}-1}\frac{1}{%
j^{\frac{1}{2}}2^{j}}\sin \left( 2\pi 2^{2j}u\right) \right) \left(
\sum_{j=l_{n}}^{l_{n+1}-1}\frac{2^{i}}{i^{\frac{1}{2}}}\sin \left( 2\pi
2^{2i}u\right) \right) du \\
&=&2\pi \sum_{i,j=l_{n}}^{l_{n+1}-1}\int_{s}^{t}\frac{2^{j-i}}{i^{\frac{1}{2}%
}j^{\frac{1}{2}}}\cos \left( 2\pi 2^{2i}u\right) \cos \left( 2\pi
2^{2j}u\right) +\frac{2^{i-j}}{i^{\frac{1}{2}}j^{\frac{1}{2}}}\sin \left(
2\pi 2^{2j}u\right) \sin \left( 2\pi 2^{2i}u\right) du \\
&=&\left( 2\pi \sum_{k=l_{n}}^{l_{n+1}-1}\frac{1}{k}\right) \left(
t-s\right) +2\pi \sum_{l_{n}\leq i<j\leq l_{n+1}-1}\left( \frac{2^{j-i}}{i^{%
\frac{1}{2}}j^{\frac{1}{2}}}+\frac{2^{i-j}}{i^{\frac{1}{2}}j^{\frac{1}{2}}}%
\right) \int_{s}^{t}\cos \left( 2\pi \left( 2^{2j}-2^{2i}\right) u\right) du
\\
&=&:\left( 2\pi \sum_{k=l_{n}}^{l_{n+1}-1}\frac{1}{k}\right) \left(
t-s\right) +\sum_{l_{n}\leq i<j\leq l_{n+1}-1}\frac{1}{i^{\frac{1}{2}}j^{%
\frac{1}{2}}2^{i+j}}p_{i,j}\left( s,t\right) ,
\end{eqnarray*}%
where%
\begin{equation*}
p_{i,j}\left( s,t\right) :=\left( \frac{2^{2j}+2^{2i}}{2^{2j}-2^{2i}}\right)
\left( \sin \left( 2\pi \left( 2^{2j}-2^{2i}\right) t\right) -\sin \left(
2\pi \left( 2^{2j}-2^{2i}\right) s\right) \right) .
\end{equation*}%
While, for $q_{n}\left( s,t\right) $, 
\begin{eqnarray*}
&&q_{n}\left( s,t\right) =y_{n}\left( s\right) x_{n}\left( t\right)
-x_{n}\left( s\right) y_{n}\left( t\right) \\
&=&\left( \sum_{i=l_{n}}^{l_{n+1}-1}\frac{1}{i^{\frac{1}{2}}2^{i}}\sin
\left( 2\pi 2^{2i}s\right) \right) \left( \sum_{j=l_{n}}^{l_{n+1}-1}\frac{1}{%
j^{\frac{1}{2}}2^{j}}\cos \left( 2\pi 2^{2j}t\right) \right) \\
&&-\left( \sum_{i=l_{n}}^{l_{n+1}-1}\frac{1}{i^{\frac{1}{2}}2^{i}}\cos
\left( 2\pi 2^{2i}s\right) \right) \left( \sum_{j=l_{n}}^{l_{n+1}-1}\frac{1}{%
j^{\frac{1}{2}}2^{j}}\sin \left( 2\pi 2^{2j}t\right) \right) \\
&=&\sum_{i,j=l_{n}}^{l_{n+1}-1}\frac{1}{i^{\frac{1}{2}}j^{\frac{1}{2}}2^{i+j}%
}\sin \left( 2\pi \left( 2^{2i}s-2^{2j}t\right) \right)
\end{eqnarray*}%
\begin{equation*}
=-\sum_{k=l_{n}}^{l_{n+1}-1}\frac{1}{k2^{2k}}\sin \left( 2\pi 2^{2k}\left(
t-s\right) \right) +\sum_{l_{n}\leq i<j\leq l_{n+1}-1}\frac{1}{i^{\frac{1}{2}%
}j^{\frac{1}{2}}2^{i+j}}q_{i,j}\left( s,t\right) ,
\end{equation*}%
where%
\begin{equation*}
q_{i,j}\left( s,t\right) =\sin \left( 2\pi \left( 2^{2i}s-2^{2j}t\right)
\right) +\sin \left( 2\pi \left( 2^{2j}s-2^{2i}t\right) \right) .
\end{equation*}%
Thus%
\begin{eqnarray*}
&&A\left( g_{n}\right) \left( s,t\right) -\left( \pi
\sum_{k=l_{n}}^{l_{n+1}-1}\frac{1}{k}\right) \left( t-s\right) \\
&=&\frac{1}{2}\left( -\sum_{k=l_{n}}^{l_{n+1}-1}\frac{1}{k2^{2k}}\sin \left(
2\pi 2^{2k}\left( t-s\right) \right) +\sum_{l_{n}\leq i<j\leq l_{n+1}-1}%
\frac{1}{i^{\frac{1}{2}}j^{\frac{1}{2}}2^{i+j}}\left( p_{i,j}\left(
s,t\right) +q_{i,j}\left( s,t\right) \right) \right) .
\end{eqnarray*}%
Based on Lemma \ref{triangle converge in p-variation}, $%
\sum_{k=l_{n}}^{l_{n+1}-1}k^{-1}2^{-2k}\sin \left( 2\pi 2^{2k}\left(
t-s\right) \right) $ converge to $0$ as $n$ tends to infinity in $p$%
-variation for any $p>1$, so we are left with 
\begin{equation*}
\sum_{l_{n}\leq i<j\leq l_{n+1}-1}\frac{1}{i^{\frac{1}{2}}j^{\frac{1}{2}%
}2^{i+j}}\left( p_{i,j}\left( s,t\right) +q_{i,j}\left( s,t\right) \right) .
\end{equation*}

While%
\begin{eqnarray*}
&&p_{i,j}\left( s,t\right) +q_{i,j}\left( s,t\right) \\
&=&\left( \frac{2^{2j}+2^{2i}}{2^{2j}-2^{2i}}\right) \left( \sin \left( 2\pi
\left( 2^{2j}-2^{2i}\right) t\right) -\sin \left( 2\pi \left(
2^{2j}-2^{2i}\right) s\right) \right) \\
&&+\sin \left( 2\pi \left( 2^{2i}s-2^{2j}t\right) \right) +\sin \left( 2\pi
\left( 2^{2j}s-2^{2i}t\right) \right) \\
&=&\left( \frac{2\times 2^{2i}}{2^{2j}-2^{2i}}\right) \left( \sin \left(
2\pi \left( 2^{2j}-2^{2i}\right) t\right) -\sin \left( 2\pi \left(
2^{2j}-2^{2i}\right) s\right) \right) \\
&&+\left( \sin \left( 2\pi \left( 2^{2j}-2^{2i}\right) t\right) +\sin \left(
2\pi \left( 2^{2i}s-2^{2j}t\right) \right) \right) +\left( \sin \left( 2\pi
\left( 2^{2j}s-2^{2i}t\right) \right) -\sin \left( 2\pi \left(
2^{2j}-2^{2i}\right) s\right) \right) \\
&=&\left( \frac{2\times 2^{2i}}{2^{2j}-2^{2i}}\right) \left( \sin \left(
2\pi \left( 2^{2j}-2^{2i}\right) t\right) -\sin \left( 2\pi \left(
2^{2j}-2^{2i}\right) s\right) \right) \\
&&-2\cos \left( 2\pi \left( 2^{2j}t-2^{2i}\frac{t+s}{2}\right) \right) \sin
\left( 2\pi 2^{2i}\frac{t-s}{2}\right) -2\cos \left( 2\pi \left(
2^{2j}s-2^{2i}\frac{t+s}{2}\right) \right) \sin \left( 2\pi 2^{2i}\frac{t-s}{%
2}\right) \\
&=&\left( \frac{4\times 2^{2i}}{2^{2j}-2^{2i}}\right) \cos \left( 2\pi
\left( 2^{2j}-2^{2i}\right) \frac{t+s}{2}\right) \sin \left( 2\pi \left(
2^{2j}-2^{2i}\right) \frac{t-s}{2}\right) \\
&&-4\cos \left( 2\pi \left( \left( 2^{2j}-2^{2i}\right) \frac{t+s}{2}\right)
\right) \cos \left( 2\pi 2^{2j}\frac{t-s}{2}\right) \sin \left( 2\pi 2^{2i}%
\frac{t-s}{2}\right) \\
&=&4\cos \left( 2\pi \left( \left( 2^{2j}-2^{2i}\right) \frac{t+s}{2}\right)
\right) \left( \left( \frac{2^{2i}}{2^{2j}-2^{2i}}\right) \sin \left( 2\pi
\left( 2^{2j}-2^{2i}\right) \frac{t-s}{2}\right) \right. \\
&&\left. -\cos \left( 2\pi 2^{2j}\frac{t-s}{2}\right) \sin \left( 2\pi 2^{2i}%
\frac{t-s}{2}\right) \right) .
\end{eqnarray*}%
Therefore,%
\begin{eqnarray}
&&\left\vert p_{i,j}\left( s,t\right) +q_{i,j}\left( s,t\right) \right\vert
\label{estimates for p_ij plus q_ij} \\
&\leq &4\left( \frac{2^{2i+2j}}{2^{2j}-2^{2i}}\right) \left\vert \frac{\sin
\left( 2\pi 2^{2j}\frac{t-s}{2}\right) }{2^{2j}}\cos \left( 2\pi 2^{2i}\frac{%
t-s}{2}\right) -\frac{\sin \left( 2\pi 2^{2i}\frac{t-s}{2}\right) }{2^{2i}}%
\cos \left( 2\pi 2^{2j}\frac{t-s}{2}\right) \right\vert .  \notag
\end{eqnarray}%
While, since for any $\theta $, and any integer $n\geq 1$, 
\begin{equation*}
\sin \theta \tprod\limits_{k=0}^{n-1}\cos \left( 2^{k}\theta \right) =\frac{%
\sin \left( 2^{n}\theta \right) }{2^{n}}\text{, }
\end{equation*}%
so, when $j>i$,%
\begin{equation*}
\frac{\sin \left( 2^{2j}\theta \right) }{2^{2j}}=\frac{\sin \left(
2^{2i}\theta \right) }{2^{2i}}\tprod\limits_{k=2i}^{2j-1}\cos \left(
2^{k}\theta \right) .
\end{equation*}%
Thus when $\theta =\pi \left( t-s\right) $, continue with $\left( \ref%
{estimates for p_ij plus q_ij}\right) $, we have%
\begin{eqnarray*}
&&\left\vert p_{i,j}\left( s,t\right) +q_{i,j}\left( s,t\right) \right\vert
\\
&\leq &4\left( \frac{2^{2i+2j}}{2^{2j}-2^{2i}}\right) \left\vert \frac{\sin
\left( 2\pi 2^{2j}\frac{t-s}{2}\right) }{2^{2j}}\cos \left( 2\pi 2^{2i}\frac{%
t-s}{2}\right) -\frac{\sin \left( 2\pi 2^{2i}\frac{t-s}{2}\right) }{2^{2i}}%
\cos \left( 2\pi 2^{2j}\frac{t-s}{2}\right) \right\vert \\
&=&4\left( \frac{2^{2i+2j}}{2^{2j}-2^{2i}}\right) \left\vert \frac{\sin
\left( 2\pi 2^{2i}\frac{t-s}{2}\right) }{2^{2i}}\right\vert \left\vert \cos
\left( 2\pi 2^{2i}\frac{t-s}{2}\right) \tprod\limits_{k=2i}^{2j-1}\cos
\left( 2\pi 2^{k}\frac{t-s}{2}\right) -\cos \left( 2\pi 2^{2j}\frac{t-s}{2}%
\right) \right\vert \\
&\leq &4\left( \frac{2^{2i+2j}}{2^{2j}-2^{2i}}\right) \frac{\left\vert \sin
\left( 2\pi 2^{2i}\frac{t-s}{2}\right) \right\vert }{2^{2i}}\times 2\leq 
\frac{32}{3}\left\vert \sin \left( 2\pi 2^{2i}\frac{t-s}{2}\right)
\right\vert \text{.\ \ \ (}\frac{2^{2j}}{2^{2j}-2^{2i}}\leq \frac{4}{3}\text{
when }j>i\text{)}
\end{eqnarray*}%
Therefore, for any $p\in \left( 1,2\right) $,%
\begin{eqnarray}
&&\sum_{l_{n}\leq i<j\leq l_{n+1}-1}\frac{1}{i^{\frac{1}{2}}j^{\frac{1}{2}%
}2^{i+j}}\left\vert p_{i,j}\left( s,t\right) +q_{i,j}\left( s,t\right)
\right\vert  \label{bound for p_ij plus q_ij} \\
&\leq &\frac{32}{3}\sum_{j=l_{n}+1}^{l_{n+1}-1}\frac{1}{j^{\frac{1}{2}}2^{j}}%
\left( \sum_{i=l_{n}}^{j-1}\frac{1}{i^{\frac{1}{2}}2^{i}}\left\vert \sin
\left( 2\pi 2^{2i}\frac{t-s}{2}\right) \right\vert \right)  \notag \\
&\leq &\frac{32}{3}\sum_{j=l_{n}+1}^{l_{n+1}-1}\frac{1}{j^{\frac{1}{2}%
}2^{\left( 2-\frac{2}{p}\right) j}}\left( \sum_{i=l_{n}}^{j-1}\frac{1}{i^{%
\frac{1}{2}}2^{\frac{2}{p}i}}\left\vert \sin \left( 2\pi 2^{2i}\frac{t-s}{2}%
\right) \right\vert \right) .  \notag
\end{eqnarray}%
While since $\left\vert \sin \left( t-s\right) \right\vert \leq 1\wedge
\left\vert t-s\right\vert $, based on $\left( \ref{inequality 2-var tends to
zero}\right) $ in Lemma \ref{Lemma, decay of 2-var}, for any $p>1$, there
exists a constant $\widetilde{C_{\frac{1}{2},p,1}}$, s.t. for any $l_{n}$
and~any $j>l_{n}$, we have, 
\begin{equation*}
\left\Vert \sum_{i=l_{n}}^{j-1}\frac{1}{i^{\frac{1}{2}}2^{\frac{2}{p}i}}%
\left\vert \sin \left( 2\pi 2^{2i}\frac{t-s}{2}\right) \right\vert
\right\Vert _{p-var}\leq \frac{\widetilde{C_{\frac{1}{2},p,1}}}{l_{n}^{\frac{%
1}{2}}}.
\end{equation*}%
Therefore, for any $p\in \left( 1,2\right) $, since $\left\Vert \cdot
\right\Vert _{p-var}$ is a norm, combined with $\left( \ref{bound for p_ij
plus q_ij}\right) $, 
\begin{eqnarray*}
&&\left\Vert \sum_{l_{n}\leq i<j\leq l_{n+1}-1}\frac{1}{i^{\frac{1}{2}}j^{%
\frac{1}{2}}2^{i+j}}\left( p_{i,j}\left( s,t\right) +q_{i,j}\left(
s,t\right) \right) \right\Vert _{p-var} \\
&\leq &\frac{32}{3}\sum_{j=l_{n}+1}^{l_{n+1}-1}\frac{1}{j^{\frac{1}{2}%
}2^{2\left( 1-\frac{1}{p}\right) j}}\left\Vert \sum_{i=l_{n}}^{j-1}\frac{1}{%
i^{\frac{1}{2}}2^{\frac{2}{p}i}}\left\vert \sin \left( 2\pi 2^{2i}\frac{t-s}{%
2}\right) \right\vert \right\Vert _{p-var} \\
&\leq &\frac{32\widetilde{C_{\frac{1}{2},p,1}}}{3l_{n}^{\frac{1}{2}}}%
\sum_{j=l_{n}+1}^{l_{n+1}-1}\frac{1}{j^{\frac{1}{2}}2^{2\left( 1-\frac{1}{p}%
\right) j}}\leq \frac{32\widetilde{C_{\frac{1}{2},p,1}}}{3(2^{2\left( 1-%
\frac{1}{p}\right) }-1)}\frac{1}{l_{n}2^{2\left( 1-\frac{1}{p}\right) l_{n}}}%
\rightarrow 0\text{ as }n\rightarrow \infty \text{.}
\end{eqnarray*}%
Thus, for any $p>1$ (since $p$-variation is non-increasing, so if converge
in $p$-variation, $p\in \left( 1,2\right) $, then converge in $p$-variation, 
$p>1$)%
\begin{equation*}
\lim_{n\rightarrow \infty }\left\Vert A\left( g_{n}\right) \left( s,t\right)
-\left( \pi \sum_{k=l_{n}}^{l_{n+1}-1}\frac{1}{k}\right) \left( t-s\right)
\right\Vert _{p-var}=0.
\end{equation*}
\end{proof}

\begin{example}
\label{Example divergence in p-variation,p>1}Suppose $\left\{ l_{n}\right\} $
is a sequence of increasing integers, satisfying that for any $n\geq 1$, $%
\sum_{k=l_{n}}^{l_{n+1}-1}k^{-1}\geq n$. Define 
\begin{equation}
f_{n}\left( t\right) =\sum_{k=l_{n}}^{l_{n+1}-1}\frac{1}{k^{\frac{1}{2}}2^{k}%
}\exp \left( 2\pi i2^{2k}t\right) \text{, }t\in \left[ 0,1\right] .
\label{Definition of gn}
\end{equation}%
Then $\lim_{n\rightarrow \infty }\left\Vert f_{n}\right\Vert _{2-var,\left[
0,1\right] }=0$, but for any $0\leq s<t\leq 1$, $\lim_{n\rightarrow \infty
}A\left( f_{n}\right) \left( s,t\right) =+\infty $.
\end{example}

\begin{proof}
Follows from Lemma \ref{Lemma 1-var convergence}:%
\begin{equation*}
\lim_{n\rightarrow \infty }\left\Vert A\left( f_{n}\right) \left( s,t\right)
-\left( \pi \sum_{k=l_{n}}^{l_{n+1}-1}\frac{1}{k}\right) \left( t-s\right)
\right\Vert _{p-var}=0\text{, for any }p>1\text{.}
\end{equation*}
\end{proof}

As a clear consequence of this example, when the space of smooth paths is
equipped with $2$-variation, the area operator is not continuous, nor
bounded.

\begin{example}
\label{Example, non-closability}Suppose $\left\{ l_{n}\right\} $ is a
sequence of increasing integers, satisfying that for any $n\geq 1$, $%
\sum_{k=l_{n}}^{l_{n+1}-1}k^{-1}\geq 1$. Define 
\begin{equation*}
g_{n}\left( t\right) =\left( \pi \sum_{k=l_{n}}^{l_{n+1}-1}\frac{1}{k}%
\right) ^{-\frac{1}{2}}\sum_{k=l_{n}}^{l_{n+1}-1}\frac{1}{k^{\frac{1}{2}%
}2^{k}}\exp \left( 2\pi i2^{2k}t\right) \text{, }t\in \left[ 0,1\right] .
\end{equation*}%
Then $\lim_{n\rightarrow \infty }\left\Vert g_{n}\right\Vert _{2-var,\left[
0,1\right] }=0$, and for any $p>1$, 
\begin{equation*}
\lim_{n\rightarrow \infty }\left\Vert A\left( g_{n}\right) \left( s,t\right)
-\left( t-s\right) \right\Vert _{p-var,\left[ 0,1\right] }=0.
\end{equation*}
\end{example}

\begin{proof}
Follows from Lemma \ref{Lemma 1-var convergence}.
\end{proof}

The convergence of $A\left( g_{n}\right) $ to $t-s$ can not hold in $1$%
-variation, because $h_{n}$ is a sequence of smooth paths, so the limit of $%
A\left( g_{n}\right) $ in $1$-variation is of vanishing $1$-variation, while 
$t-s$ is not. Actually, since $g_{n}$ converge to zero in $2$-variation, so
if $A\left( g_{n}\right) $ converge in $1$-variation then should converge to 
$0$ (closable when area equipped with $1$-variation).

Example \ref{Example, non-closability} demonstrates that when the space of
smooth paths is equipped with $2$-variation and their area with $p$%
-variation, $p>1$, the area operator is not closable.

Next, we extend Young integral \cite{L. C. Young} to the case $%
p^{-1}+q^{-1}=1$ by assigning a finer scale continuity (e.g. logarithmic).
Before that, we prove a lemma. Recall definition of $\omega _{p}\left(
\gamma ,\delta \right) $ at $\left( \ref{Definition vanishing p-variation}%
\right) $.

\begin{lemma}
\label{Lemma difference of 1-var of area}Suppose $\gamma _{1}\in
C^{p-var}\left( \left[ 0,1\right] ,\mathcal{V}\right) $, $\gamma _{2}\in
C^{q-var}\left( \left[ 0,1\right] ,\mathcal{V}\right) $, $p^{-1}+q^{-1}=1$. $%
D_{1}=\left\{ t_{k}\right\} _{k}$ and $D_{2}=\left\{ s_{j}\right\} _{j}$ are
two finite partitions of $\left[ 0,1\right] $, and $D_{2}$ is a refinement
of $D_{1}$, i.e. for any $k$, there exist integers $n_{k}<n_{k+1}$, s.t. $%
t_{k}=s_{n_{k}}<s_{n_{k}+1}<\dots <s_{n_{k+1}}=t_{k+1}$. Then if denote $%
I^{D}:=I\left( \gamma _{1}^{D},\gamma _{2}^{D}\right) $ (see definition at $%
\left( \ref{Definition area of two paths}\right) $) and suppose $\left\vert
D_{1}\right\vert \leq \delta $, we have 
\begin{eqnarray*}
\left\Vert I^{D_{1}}-I^{D_{2}}\right\Vert _{1-var,\left[ 0,1\right] } &\leq
&\sum_{k}\left\Vert I^{D_{1}}\right\Vert _{1-var,\left[ t_{k},t_{k+1}\right]
}+\sum_{k}\left\Vert I^{D_{2}}\right\Vert _{1-var,\left[ t_{k},t_{k+1}\right]
} \\
&&+2\omega _{p}\left( \gamma _{1},\delta \right) \left\Vert \gamma
_{2}\right\Vert _{q-var,\left[ 0,1\right] }+2\omega _{q}\left( \gamma
_{2},\delta \right) \left\Vert \gamma _{1}\right\Vert _{p-var,\left[ 0,1%
\right] }.
\end{eqnarray*}
\end{lemma}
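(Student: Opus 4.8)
The plan is to bound $\|\beta\|_{1-var}$ with $\beta:=I^{D_1}-I^{D_2}$ by estimating $\sum_i\|\beta(r_i,r_{i+1})\|$ for an arbitrary partition $D=\{r_i\}$ of $[0,1]$ and taking the supremum. The one structural tool I use repeatedly is the block (Chen) decomposition of the iterated integral of Definition \ref{Definition area of two paths}: for a piecewise linear path and intermediate points $s=\sigma_0<\dots<\sigma_M=t$,
\[ I(\gamma_1^D,\gamma_2^D)(s,t)=\sum_{l}I(\gamma_1^D,\gamma_2^D)(\sigma_l,\sigma_{l+1})+\sum_{l<l'}\big(\gamma_1^D(\sigma_{l+1})-\gamma_1^D(\sigma_l)\big)\otimes\big(\gamma_2^D(\sigma_{l'+1})-\gamma_2^D(\sigma_{l'})\big), \]
which follows from Fubini on the simplex. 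The decisive observation is that $\gamma_i^{D_1}$ and $\gamma_i^{D_2}$ both coincide with $\gamma_i$ at every point of $D_1$ (since $D_1\subset D_2$ and each interpolant agrees with $\gamma_i$ at its own nodes). Hence, whenever all $\sigma_l$ lie in $D_1$, the cross sum is identical for $D_1$ and $D_2$ and cancels in $\beta$; in particular $\beta$ is additive over $D_1$-points.

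For fixed $i$ I apply the decomposition to $\beta(r_i,r_{i+1})$ along the points of $D_1$ inside $(r_i,r_{i+1})$ together with the endpoints, say $r_i=\tau_0<\dots<\tau_m=r_{i+1}$. Each $[\tau_l,\tau_{l+1}]$ lies inside a single coarse interval $[t_k,t_{k+1}]$, so the within-block pieces give $\sum_{i,l}\|\beta(\tau_l,\tau_{l+1})\|\le\sum_{i,l}\big(\|I^{D_1}(\tau_l,\tau_{l+1})\|+\|I^{D_2}(\tau_l,\tau_{l+1})\|\big)$; since $\{[\tau_l^{(i)},\tau_{l+1}^{(i)}]\}_{i,l}$ is a partition refining $D_1$, grouping by coarse intervals and using that $1$-variation is a supremum over partitions bounds this by $\sum_k\|I^{D_1}\|_{1-var,[t_k,t_{k+1}]}+\sum_k\|I^{D_2}\|_{1-var,[t_k,t_{k+1}]}$, the first two terms of the claim. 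The remaining cross-correction $R_i$ survives only at $r_i,r_{i+1}$, because every interior $\tau_l$ is a $D_1$-point where $\gamma_i^{D_1}=\gamma_i^{D_2}$; a short computation collapses it to
\[ R_i=\xi_1^{(i)}\otimes\big(\gamma_2^{D_2}(r_{i+1})-\gamma_2(\tau_1^{(i)})\big)-\big(\gamma_1(\tau_{m-1}^{(i)})-\gamma_1^{D_1}(r_i)\big)\otimes\eta_2^{(i)}, \]
with $\xi_1^{(i)}=\gamma_1^{D_2}(r_i)-\gamma_1^{D_1}(r_i)$, $\eta_2^{(i)}=\gamma_2^{D_2}(r_{i+1})-\gamma_2^{D_1}(r_{i+1})$, and $R_i=0$ unless $[r_i,r_{i+1}]$ contains an interior $D_1$-point.

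To sum the corrections I apply Hölder with $p^{-1}+q^{-1}=1$ to the two families. Since $\gamma_2(\tau_1^{(i)})=\gamma_2^{D_2}(\tau_1^{(i)})$ and $\gamma_1(\tau_{m-1}^{(i)})=\gamma_1^{D_1}(\tau_{m-1}^{(i)})$, the factors $\gamma_2^{D_2}(r_{i+1})-\gamma_2(\tau_1^{(i)})$ and $\gamma_1(\tau_{m-1}^{(i)})-\gamma_1^{D_1}(r_i)$ are genuine increments of the interpolants over pairwise disjoint, ordered intervals, so their $\ell^q$- and $\ell^p$-sums are at most $\|\gamma_2\|_{q-var}$ and $\|\gamma_1\|_{p-var}$ (linear interpolation does not increase $p$-variation). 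The discrepancies $\xi_1^{(i)},\eta_2^{(i)}$ are the delicate factors: each vanishes unless $r_i$ (resp. $r_{i+1}$) sits strictly inside a coarse interval, and at most one straddling gap of $D$ can begin in each coarse interval, so they are indexed by distinct coarse intervals. Writing $\xi_1^{(i)}=(\gamma_1^{D_2}(r_i)-\gamma_1(r_i))-(\gamma_1^{D_1}(r_i)-\gamma_1(r_i))$ exhibits it as a difference of two interpolation errors, each a convex combination of increments of $\gamma_1$ over intervals of length $\le|D_1|\le\delta$; the triangle inequality in $\ell^p$ then yields $(\sum_i\|\xi_1^{(i)}\|^p)^{1/p}\le 2\,\omega_p(\gamma_1,\delta)$, and symmetrically $(\sum_i\|\eta_2^{(i)}\|^q)^{1/q}\le 2\,\omega_q(\gamma_2,\delta)$. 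Combining gives $\sum_i\|R_i\|\le 2\,\omega_p(\gamma_1,\delta)\|\gamma_2\|_{q-var}+2\,\omega_q(\gamma_2,\delta)\|\gamma_1\|_{p-var}$; adding the within-block bound and taking the supremum over $D$ finishes the proof.

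The main obstacle is precisely this last bookkeeping for the endpoint discrepancies $\gamma_i^{D_2}-\gamma_i^{D_1}$. One must (i) verify the collapse of $R_i$ to the two-term boundary form, (ii) re-express each discrepancy as honest $\gamma_i$-increments over $\le\delta$ intervals, and (iii) argue that each coarse interval hosts at most one straddling partition gap, so that the relevant increments — over distinct coarse intervals — assemble into a single $\omega_p$- (resp. $\omega_q$-) sum with the stated constant $2$. Everything else (the Chen decomposition, the cancellation on $D_1$-points, and the superadditivity producing the $\|I^{D_j}\|_{1-var,[t_k,t_{k+1}]}$ terms) is routine once this is in place.
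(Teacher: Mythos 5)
Your proposal is correct and follows essentially the same route as the paper: Chen-decomposing the iterated integrals at the points of $D_1$, exploiting that both interpolants agree with $\gamma_i$ there so that the cross terms cancel except at the endpoints, reducing the within-block pieces to $\sum_k\Vert I^{D_j}\Vert_{1-var,[t_k,t_{k+1}]}$ by superadditivity, and applying H\"older to assemble the surviving boundary discrepancies into $2\omega_p(\gamma_1,\delta)\Vert\gamma_2\Vert_{q-var}+2\omega_q(\gamma_2,\delta)\Vert\gamma_1\Vert_{p-var}$. The only cosmetic difference is organizational: the paper first splits $I^{D_1}-I^{D_2}$ bilinearly as $I(\gamma_1^{D_1}-\gamma_1^{D_2},\gamma_2^{D_1})+I(\gamma_1^{D_2},\gamma_2^{D_1}-\gamma_2^{D_2})$ and obtains the factor $2$ from $\Vert\gamma_1^{D_1}-\gamma_1^{D_2}\Vert_{p-var,[t_k,t_{k+1}]}\le 2\Vert\gamma_1\Vert_{p-var,[t_k,t_{k+1}]}$, whereas you collapse the cross-correction by hand and get the $2$ by splitting each endpoint discrepancy into two interpolation errors.
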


\begin{proof}
Denote $\bigtriangleup \gamma _{i}:=\gamma _{i}^{D_{1}}-\gamma _{i}^{D_{2}}$%
, $i=1,2$, denote $\bigtriangleup I:=I^{D_{1}}-I^{D_{2}}$. For any $\left(
s,t\right) \in \bigtriangleup _{\left[ 0,T\right] }$, 
\begin{eqnarray*}
\bigtriangleup I\left( s,t\right) &=&\int_{s}^{t}\left( \gamma
_{1}^{D_{1}}\left( u\right) -\gamma _{1}^{D_{1}}\left( s\right) \right)
\otimes d\gamma _{2}^{D_{1}}\left( u\right) -\int_{s}^{t}\left( \gamma
_{1}^{D_{2}}\left( u\right) -\gamma _{1}^{D_{2}}\left( s\right) \right)
\otimes d\gamma _{2}^{D_{2}}\left( u\right) \\
&=&\int_{s}^{t}\left( \bigtriangleup \gamma _{1}\left( u\right)
-\bigtriangleup \gamma _{1}\left( s\right) \right) \otimes d\gamma
_{2}^{D_{1}}\left( u\right) +\int_{s}^{t}\left( \gamma _{1}^{D_{2}}\left(
u\right) -\gamma _{1}^{D_{2}}\left( s\right) \right) \otimes d\bigtriangleup
\!\gamma _{2}\left( u\right) \\
&=&:I\left( \bigtriangleup \gamma _{1},\gamma _{2}^{D_{1}}\right) \left(
s,t\right) +I\left( \gamma _{1}^{D_{2}},\bigtriangleup \gamma _{2}\right)
\left( s,t\right) \text{.}
\end{eqnarray*}%
Suppose $t_{k_{1}-1}<s\leq t_{k_{1}}\leq t_{k_{2}}\leq t<t_{k_{2}+1}$, then 
\begin{eqnarray*}
I\left( \bigtriangleup \gamma _{1},\gamma _{2}^{D_{1}}\right) \left(
s,t\right) &=&I\left( \bigtriangleup \gamma _{1},\gamma _{2}^{D_{1}}\right)
\left( s,t_{k_{1}}\right) +I\left( \bigtriangleup \gamma _{1},\gamma
_{2}^{D_{1}}\right) \left( t_{k_{1}},t_{k_{2}}\right) +I\left(
\bigtriangleup \gamma _{1},\gamma _{2}^{D_{1}}\right) \left(
t_{k_{2}},t\right) \\
&&+\left( \bigtriangleup \gamma _{1}\left( t_{k_{1}}\right) -\bigtriangleup
\gamma _{1}\left( s\right) \right) \otimes \left( \gamma _{2}^{D_{1}}\left(
t\right) -\gamma _{2}^{D_{1}}\left( t_{k_{1}}\right) \right) \\
&&+\left( \bigtriangleup \gamma _{1}\left( t_{k_{2}}\right) -\bigtriangleup
\gamma _{1}\left( t_{k_{1}}\right) \right) \otimes \left( \gamma
_{2}^{D_{1}}\left( t\right) -\gamma _{2}^{D_{1}}\left( t_{k_{2}}\right)
\right) .
\end{eqnarray*}%
where the last term vanishes, because $\bigtriangleup \gamma _{1}\left(
t_{k_{1}}\right) =\bigtriangleup \gamma _{1}\left( t_{k_{2}}\right) $.
Similar result holds for $I\left( \gamma _{2},\bigtriangleup \gamma \right)
\left( s,t\right) $:%
\begin{eqnarray*}
I\left( \gamma _{1}^{D_{2}},\bigtriangleup \gamma _{2}\right) \left(
s,t\right) &=&I\left( \gamma _{1}^{D_{2}},\bigtriangleup \gamma _{2}\right)
\left( s,t_{k_{1}}\right) +I\left( \gamma _{1}^{D_{2}},\bigtriangleup \gamma
_{2}\right) \left( t_{k_{1}},t_{k_{2}}\right) +I\left( \gamma
_{1}^{D_{2}},\bigtriangleup \gamma _{2}\right) \left( t_{k_{2}},t\right) \\
&&+\left( \gamma _{1}^{D_{2}}\left( t_{k_{2}}\right) -\gamma
_{1}^{D_{2}}\left( s\right) \right) \otimes \left( \bigtriangleup \gamma
_{2}\left( t\right) -\bigtriangleup \gamma _{2}\left( t_{k_{2}}\right)
\right) .
\end{eqnarray*}%
Thus (since $\bigtriangleup I=I\left( \bigtriangleup \gamma ,\gamma
_{1}\right) +I\left( \gamma _{2},\bigtriangleup \gamma \right) $, $%
\left\Vert u\otimes v\right\Vert \leq \left\Vert u\right\Vert \left\Vert
v\right\Vert $)%
\begin{eqnarray}
\left\Vert \bigtriangleup I\left( s,t\right) \right\Vert &\leq &\left\Vert
\bigtriangleup I\left( s,t_{k_{1}}\right) \right\Vert +\left\Vert
\bigtriangleup I\left( t_{k_{1}},t_{k_{2}}\right) \right\Vert +\left\Vert
\bigtriangleup I\left( t_{k_{2}},t\right) \right\Vert
\label{inner estimation for delta A} \\
&&+\left\Vert \bigtriangleup \gamma _{1}\left( t_{k_{1}}\right)
-\bigtriangleup \gamma _{1}\left( s\right) \right\Vert \left\Vert \gamma
_{2}^{D_{1}}\left( t\right) -\gamma _{2}^{D_{1}}\left( t_{k_{1}}\right)
\right\Vert  \notag \\
&&+\left\Vert \gamma _{1}^{D_{2}}\left( t_{k_{2}}\right) -\gamma
_{1}^{D_{2}}\left( s\right) \right\Vert \left\Vert \bigtriangleup \gamma
_{2}\left( t\right) -\bigtriangleup \gamma _{2}\left( t_{k_{2}}\right)
\right\Vert .  \notag
\end{eqnarray}%
For $\bigtriangleup I\left( t_{k_{1}},t_{k_{2}}\right) $, by using
multiplicativity and $\bigtriangleup \gamma _{i}\left( t_{k}\right) =0$,$%
\forall k$, $i=1,2$, we get 
\begin{equation}
\bigtriangleup I\left( t_{k_{1}},t_{k_{2}}\right)
=\sum_{j=k_{1}}^{k_{2}-1}\bigtriangleup I\left( t_{j},t_{j+1}\right) .
\label{inner estimation for delta  A2}
\end{equation}%
Thus, combine $\left( \ref{inner estimation for delta A}\right) $ with $%
\left( \ref{inner estimation for delta A2}\right) $, we decompose $\left[ s,t%
\right] $ into the union of three kinds of subintervals: $\left[ s,t_{k_{1}}%
\right] $, $\left[ t_{j},t_{j+1}\right] $ and $\left[ t_{k_{2}},t\right] $,
and each of them is a subinterval of some $\left[ t_{k},t_{k+1}\right] $.
Thus, for any finite partition, applying our estimates to each subinterval,
summing them together, and taking supremum over all finite partitions. By
using H\"{o}lder inequality, we get%
\begin{eqnarray}
\left\Vert \bigtriangleup I\right\Vert _{1-var} &\leq &\sum_{k}\left\Vert
\bigtriangleup I\right\Vert _{1-var,\left[ t_{k},t_{k+1}\right] }
\label{inner estimation1} \\
&&+\left( \sum_{k}\left\Vert \bigtriangleup \gamma _{1}\right\Vert _{p-var, 
\left[ t_{k},t_{k+1}\right] }^{p}\right) ^{\frac{1}{p}}\left\Vert \gamma
_{2}^{D_{1}}\right\Vert _{q-var,\left[ 0,1\right] }  \notag \\
&&+\left( \sum_{k}\left\Vert \bigtriangleup \gamma _{2}\right\Vert _{q-var, 
\left[ t_{k},t_{k+1}\right] }^{q}\right) ^{\frac{1}{q}}\left\Vert \gamma
_{1}^{D_{2}}\right\Vert _{p-var,\left[ 0,1\right] }  \notag
\end{eqnarray}

On the other hand, when $i=1,2$,%
\begin{equation}
\sup_{D}\left\Vert \gamma _{i}^{D}\right\Vert _{p-var,\left[ 0,1\right]
}\leq \left\Vert \gamma _{i}\right\Vert _{p-var,\left[ 0,1\right] },
\label{inner estimation2}
\end{equation}%
and since $\bigtriangleup \gamma _{i}:=\gamma _{i}^{D_{1}}-\gamma
_{i}^{D_{2}}$, 
\begin{eqnarray}
\left\Vert \bigtriangleup \gamma _{i}\right\Vert _{p-var,\left[ t_{k},t_{k+1}%
\right] } &\leq &\left\Vert \gamma _{i}^{D_{1}}\right\Vert _{p-var,\left[
t_{k},t_{k+1}\right] }+\left\Vert \gamma _{i}^{D_{2}}\right\Vert _{p-var,%
\left[ t_{k},t_{k+1}\right] }  \label{inner estimation3} \\
&\leq &2\left\Vert \gamma _{i}\right\Vert _{p-var,\left[ t_{k},t_{k+1}\right]
}.  \notag
\end{eqnarray}%
Therefore, combine $\left( \ref{inner estimation1}\right) $, $\left( \ref%
{inner estimation2}\right) $ with $\left( \ref{inner estimation3}\right) $, 
\begin{eqnarray*}
\left\Vert \bigtriangleup I\right\Vert _{1-var,\left[ 0,1\right] } &\leq
&\sum_{k}\left\Vert \bigtriangleup I\right\Vert _{1-var,\left[ t_{k},t_{k+1}%
\right] }+2\left( \sum_{k}\left\Vert \gamma _{1}\right\Vert _{p-var,\left[
t_{k},t_{k+1}\right] }^{p}\right) ^{\frac{1}{p}}\left\Vert \gamma
_{2}\right\Vert _{q-var,\left[ 0,1\right] } \\
&&+2\left( \sum_{k}\left\Vert \gamma _{2}\right\Vert _{q-var,\left[
t_{k},t_{k+1}\right] }^{q}\right) ^{\frac{1}{q}}\left\Vert \gamma
_{1}\right\Vert _{p-var,\left[ 0,1\right] }.
\end{eqnarray*}%
Since $\left\Vert \bigtriangleup I\right\Vert _{1-var,\left[ t_{k},t_{k+1}%
\right] }\leq \left\Vert I^{D_{1}}\right\Vert _{1-var,\left[ t_{k},t_{k+1}%
\right] }+\left\Vert I^{D_{2}}\right\Vert _{1-var,\left[ t_{k},t_{k+1}\right]
}$ and $\left\vert D_{1}\right\vert \leq \delta $, recall definition of $%
\omega _{p}\left( \gamma ,\delta \right) $ at $\left( \ref{Definition
vanishing p-variation}\right) $, proof finishes.
\end{proof}

The following lemma will be used in the proof of Theorem \ref{Theorem
generalized Young integral}.

\begin{lemma}
\label{Lemma estimation of area bisecting interval}Suppose $\gamma _{i}:%
\left[ 0,T\right] \rightarrow \mathcal{V}$, $i=1,2$, are two continuous
piecewise linear paths obtained by interpolating on the same finite
partition of $\left[ 0,T\right] $. Then for any $p>1$, $q>1$, $%
p^{-1}+q^{-1}=1$, there exists finite partition $D=\left\{ t_{k}\right\} $
of $\left[ 0,T\right] $, $\left\vert D\right\vert \leq 2^{-1}T$, s.t. 
\begin{equation*}
\left\Vert I\left( \gamma _{1},\gamma _{2}\right) \right\Vert _{1-var,\left[
0,T\right] }\leq \sum_{k,t_{k}\in D}\left\Vert I\left( \gamma _{1},\gamma
_{2}\right) \right\Vert _{1-var,\left[ t_{k},t_{k+1}\right] }+2\left\Vert
\gamma _{1}\right\Vert _{p-var,\left[ 0,T\right] }\left\Vert \gamma
_{2}\right\Vert _{q-var,\left[ 0,T\right] }.
\end{equation*}%
If $\gamma _{i}$ are linear on $\left[ 0,T\right] $ then%
\begin{equation*}
\left\Vert I\left( \gamma _{1},\gamma _{2}\right) \right\Vert _{1-var,\left[
0,T\right] }\leq \left\Vert \gamma _{1}\right\Vert _{p-var,\left[ 0,T\right]
}\left\Vert \gamma _{2}\right\Vert _{q-var,\left[ 0,T\right] }.
\end{equation*}
\end{lemma}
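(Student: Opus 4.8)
The plan is to reduce everything to the Chen-type decomposition of the iterated integral, which is the only structural input needed. Writing $I:=I(\gamma_{1},\gamma_{2})$ and using the representation $I(s,t)=\int_{s}^{t}(\gamma_{1}(u)-\gamma_{1}(s))\otimes d\gamma_{2}(u)$ that follows from Definition \ref{Definition area of two paths}, splitting the outer integral gives, for any $s\le u\le t$,
\[
I(s,t)=I(s,u)+I(u,t)+(\gamma_{1}(u)-\gamma_{1}(s))\otimes(\gamma_{2}(t)-\gamma_{2}(u)),
\]
and more generally, for $s=r_{0}<r_{1}<\cdots<r_{n}=t$,
\[
I(s,t)=\sum_{i=0}^{n-1}I(r_{i},r_{i+1})+\sum_{i=1}^{n-1}(\gamma_{1}(r_{i})-\gamma_{1}(s))\otimes(\gamma_{2}(r_{i+1})-\gamma_{2}(r_{i})),
\]
the $i=0$ cross term dropping out since $\gamma_{1}(r_{0})-\gamma_{1}(s)=0$. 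I would prove the two displays of the lemma separately, treating the linear case by direct computation and the bisection by refining an arbitrary partition with this identity.

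For the linear case I set $v_{i}:=\gamma_{i}(T)-\gamma_{i}(0)$, so that $\gamma_{i}(u)-\gamma_{i}(s)=\tfrac{u-s}{T}v_{i}$ and $d\gamma_{2}(u)=\tfrac{1}{T}v_{2}\,du$; hence for every $[s,t]\subseteq[0,T]$
\[
I(s,t)=\frac{(t-s)^{2}}{2T^{2}}\,v_{1}\otimes v_{2},\qquad \|I(s,t)\|\le\frac{(t-s)^{2}}{2T^{2}}\|v_{1}\|\,\|v_{2}\|.
\]
A single segment has $\|\gamma_{i}\|_{p-var,[0,T]}=\|v_{i}\|$, and for any partition $\sum_{j}(s_{j+1}-s_{j})^{2}\le\big(\sum_{j}(s_{j+1}-s_{j})\big)^{2}=T^{2}$, so taking the supremum yields $\|I\|_{1-var,[0,T]}\le\tfrac12\|v_{1}\|\,\|v_{2}\|\le\|\gamma_{1}\|_{p-var}\|\gamma_{2}\|_{q-var}$, which is the second display with room to spare.

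For the bisection I would take the single interior point $\tau:=T/2$ and $D:=\{0,\tau,T\}$, which trivially satisfies $|D|\le T/2$. Given an arbitrary partition $P=\{s_{j}\}$ of $[0,T]$, at most one of its intervals, say $[s_{j^{\ast}},s_{j^{\ast}+1}]$, contains $\tau$ in its interior; applying the two-point identity to that interval alone gives
\[
\sum_{j}\|I(s_{j},s_{j+1})\|\le\sum_{P\cup\{\tau\}}\|I(\cdot,\cdot)\|+\|\gamma_{1}(\tau)-\gamma_{1}(s_{j^{\ast}})\|\,\|\gamma_{2}(s_{j^{\ast}+1})-\gamma_{2}(\tau)\|.
\]
Every interval of $P\cup\{\tau\}$ lies in $[0,\tau]$ or $[\tau,T]$, so the first sum is at most $\|I\|_{1-var,[0,\tau]}+\|I\|_{1-var,[\tau,T]}$, while the cross term is a product of single increments, hence at most $\|\gamma_{1}\|_{\infty-var}\|\gamma_{2}\|_{\infty-var}\le\|\gamma_{1}\|_{p-var}\|\gamma_{2}\|_{q-var}$ since $p\mapsto\|\cdot\|_{p-var}$ is non-increasing. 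Taking the supremum over $P$ gives the first display, in fact with constant $1$.

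The explicit integration in the linear case is routine; the step needing care is the bisection bookkeeping: confirming that only one $P$-interval straddles $\tau$, that the $i=0$ cross term vanishes, and that refinement distributes the variation cleanly over $[0,\tau]$ and $[\tau,T]$. The only genuine obstacle would surface if one insisted on a general $D$ (for instance placing its points at the break points of the common partition, so that the recursion in Theorem \ref{Theorem generalized Young integral} terminates on honest linear segments): then a single $P$-interval could contain several $D$-points, and the cross-term sum $\sum_{i}\|\gamma_{1}(r_{i})-\gamma_{1}(s_{j})\|\,\|\gamma_{2}(r_{i+1})-\gamma_{2}(r_{i})\|$ would have to be tamed by Hölder, where the factor $\big(\sum_{i}\|\gamma_{1}(r_{i})-\gamma_{1}(s_{j})\|^{p}\big)^{1/p}$ is not cleanly dominated by $\|\gamma_{1}\|_{p-var}$. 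Choosing the single midpoint sidesteps this entirely and reduces the error to one increment-product, which is precisely why the generous constant $2$ in the statement is never tight.
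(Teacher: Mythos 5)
Your proof is correct, but it takes a genuinely different (and cleaner) route through the bisection step. The paper locates the breakpoint $t_{j_{1}}$ of the common interpolation partition adjacent to $2^{-1}T$, splits $\left[ 0,T\right] $ at $t_{j_{1}}$ and $t_{j_{1}+1}$, and uses the linearity of $\gamma _{i}$ on the middle segment $\left[ t_{j_{1}},t_{j_{1}+1}\right] $ to absorb $\left\Vert I\right\Vert _{1-var,\left[ t_{j_{1}},t_{j_{1}+1}\right] }$ into the constant, with a three-way case analysis on whether $t_{j_{1}}=0$ or $t_{j_{1}+1}=T$; the two Chen cross terms produce the constant $2$. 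You instead always insert the exact midpoint $\tau =T/2$ and observe that refining an arbitrary partition $P$ by one point costs at most one cross term $\left\Vert \gamma _{1}\left( \tau \right) -\gamma _{1}\left( s_{j^{\ast }}\right) \right\Vert \left\Vert \gamma _{2}\left( s_{j^{\ast }+1}\right) -\gamma _{2}\left( \tau \right) \right\Vert \leq \left\Vert \gamma _{1}\right\Vert _{p-var}\left\Vert \gamma _{2}\right\Vert _{q-var}$, which yields the first display with constant $1$ and no case analysis. Your version buys three things: it does not use piecewise linearity at all in the bisection step (only in the linear base case), so the inequality holds for arbitrary continuous bounded variation paths; it satisfies the mesh condition $\left\vert D\right\vert \leq 2^{-1}T$ cleanly in every case, whereas the paper's chosen partition can contain a middle interval of length greater than $2^{-1}T$ (harmless there because the paths are linear on it, but formally at odds with the stated mesh bound unless one further subdivides); and the better constant. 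The paper's version buys nothing extra for the application: in the recursive halving inside the proof of Theorem \ref{Theorem generalized Young integral} either form works identically. Your treatment of the linear case is also the accurate one — $\left\Vert I\right\Vert _{1-var,\left[ 0,T\right] }$ equals $\frac{1}{2}\left\Vert v_{1}\otimes v_{2}\right\Vert $ rather than the paper's displayed $\left\Vert v_{1}\otimes v_{2}\right\Vert $, though this is immaterial for the stated inequality.
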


\begin{proof}
Denote $I:=I\left( \gamma _{1},\gamma _{2}\right) $ and denote $D^{\prime
}=\left\{ t_{j}\right\} _{j=0}^{n}$ as the finite partition on which $\gamma
_{i}$, $i=1,2$, are interpolated.

When $n=1$, $\left\{ t_{j}\right\} _{j=0}^{n}=\left\{ 0,T\right\} $, then $%
\gamma _{i}$ are linear on $\left[ 0,T\right] $, $i=1,2$. After computation,
one gets (assume $\left\Vert u\otimes v\right\Vert \leq \left\Vert
u\right\Vert \left\Vert v\right\Vert $)%
\begin{eqnarray}
\left\Vert I\right\Vert _{1-var,\left[ 0,T\right] } &=&\left\Vert \left(
\gamma _{1}\left( T\right) -\gamma _{1}\left( 0\right) \right) \otimes
\left( \gamma _{2}\left( T\right) -\gamma _{2}\left( 0\right) \right)
\right\Vert  \label{estimation of area on a line segment} \\
&\leq &\left\Vert \gamma _{1}\left( T\right) -\gamma _{1}\left( 0\right)
\right\Vert \left\Vert \gamma _{2}\left( T\right) -\gamma _{2}\left(
0\right) \right\Vert  \notag \\
&\leq &\left\Vert \gamma _{1}\right\Vert _{p-var,\left[ 0,T\right]
}\left\Vert \gamma _{2}\right\Vert _{q-var,\left[ 0,T\right] }.  \notag
\end{eqnarray}

When $n\geq 2$, denote $t_{j_{1}}:=\min_{j}\left\{ t_{j}|t_{j}\leq
2^{-1}T\right\} $.

If $t_{j_{1}}=0$, then $j_{1}=0$, and $t_{j_{1}+1}=t_{1}>2^{-1}T$. Thus%
\begin{eqnarray*}
\left\Vert I\right\Vert _{1-var,\left[ 0,T\right] } &\leq &\left\Vert
I\right\Vert _{1-var,\left[ 0,t_{1}\right] }+\left\Vert I\right\Vert _{1-var,%
\left[ t_{1},T\right] } \\
&&+\left\Vert \gamma _{1}\right\Vert _{p-var,\left[ 0,t_{1}\right]
}\left\Vert \gamma _{2}\right\Vert _{q-var,\left[ t_{1},T\right] }.
\end{eqnarray*}%
Use $\left( \ref{estimation of area on a line segment}\right) $ for $%
\left\Vert I\right\Vert _{1-var,\left[ 0,t_{1}\right] }$, 
\begin{eqnarray*}
\left\Vert I\right\Vert _{1-var,\left[ 0,T\right] } &\leq &\left\Vert
I\right\Vert _{1-var,\left[ t_{1},T\right] }+\left\Vert \gamma
_{1}\right\Vert _{p-var,\left[ 0,t_{1}\right] }\left\Vert \gamma
_{2}\right\Vert _{q-var,\left[ 0,t_{1}\right] } \\
&&+\left\Vert \gamma _{1}\right\Vert _{p-var,\left[ 0,t_{1}\right]
}\left\Vert \gamma _{2}\right\Vert _{q-var,\left[ t_{1},T\right] }. \\
&\leq &\left\Vert I\right\Vert _{1-var,\left[ t_{1},T\right] }+2^{\frac{1}{p}%
}\left\Vert \gamma _{1}\right\Vert _{p-var,\left[ 0,T\right] }\left\Vert
\gamma _{2}\right\Vert _{q-var,\left[ 0,T\right] }.
\end{eqnarray*}%
Since $T-t_{1}<2^{-1}T$, lemma holds.

If $t_{j_{1}}>0$, then%
\begin{eqnarray}
\left\Vert I\right\Vert _{1-var,\left[ 0,T\right] } &\leq &\left\Vert
I\right\Vert _{1-var,\left[ 0,t_{j_{1}}\right] }+\left\Vert I\right\Vert
_{1-var,\left[ t_{j_{1}},T\right] }  \label{inner area 1} \\
&&+\left\Vert \gamma _{1}\right\Vert _{p-var,\left[ 0,t_{j_{1}}\right]
}\left\Vert \gamma _{2}\right\Vert _{q-var,\left[ t_{j_{1}},T\right] .} 
\notag
\end{eqnarray}%
Then if $t_{j_{1}+1}=T$, $\gamma _{i}$ are linear on $\left[ t_{j_{1}},T%
\right] $, $i=1,2$, so similar as above,%
\begin{equation*}
\left\Vert I\right\Vert _{1-var,\left[ 0,T\right] }\leq \left\Vert
I\right\Vert _{1-var,\left[ 0,t_{j_{1}}\right] }+2^{\frac{1}{q}}\left\Vert
\gamma _{1}\right\Vert _{p-var,\left[ 0,T\right] }\left\Vert \gamma
_{2}\right\Vert _{q-var,\left[ 0,T\right] }.
\end{equation*}%
Since $t_{j_{1}}\leq 2^{-1}T$, lemma holds.

If $t_{j_{1}+1}<T$, then $0<t_{j_{1}}\leq 2^{-1}T<t_{j_{1}+1}<T$, continue
with $\left( \ref{inner area 1}\right) $,%
\begin{eqnarray}
\left\Vert I\right\Vert _{1-var,\left[ 0,t_{j_{1}}\right] } &\leq
&\left\Vert I\right\Vert _{1-var,\left[ 0,t_{j_{1}}\right] }+\left\Vert
I\right\Vert _{1-var,\left[ t_{j_{1}},t_{j_{1}+1}\right] }
\label{inner area 2} \\
&&+\left\Vert \gamma _{1}\right\Vert _{p-var,\left[ 0,t_{j_{1}}\right]
}\left\Vert \gamma _{2}\right\Vert _{q-var,\left[ t_{j_{1}},t_{j_{1}+1}%
\right] .}  \notag
\end{eqnarray}%
While $\gamma _{i}$ are linear on $\left[ t_{j_{1}},t_{j_{1}+1}\right] $, so 
\begin{equation}
\left\Vert I\right\Vert _{1-var,\left[ t_{j_{1}},t_{j_{1}+1}\right] }\leq
\left\Vert \gamma _{1}\right\Vert _{p-var,\left[ t_{j_{1}},t_{j_{1}+1}\right]
}\left\Vert \gamma _{2}\right\Vert _{q-var,\left[ t_{j_{1}},t_{j_{1}+1}%
\right] }.  \label{inner area 3}
\end{equation}%
Thus, combine $\left( \ref{inner area 1}\right) $, $\left( \ref{inner area 2}%
\right) $ with $\left( \ref{inner area 3}\right) $, using H\"{o}lder
inequality, we get%
\begin{eqnarray*}
\left\Vert I\right\Vert _{1-var,\left[ 0,T\right] } &\leq &\left\Vert
I\right\Vert _{1-var,\left[ 0,t_{j_{1}}\right] }+\left\Vert I\right\Vert
_{1-var,\left[ t_{j_{1}+1},T\right] } \\
&&+2\left\Vert \gamma _{1}\right\Vert _{p-var,\left[ 0,T\right] }\left\Vert
\gamma _{2}\right\Vert _{q-var,\left[ 0,T\right] .}
\end{eqnarray*}%
Since $t_{j_{1}}\leq 2^{-1}T$ and $t_{j_{1}+1}>2^{-1}T$, so lemma holds.
Proof finishes.
\end{proof}

\medskip \medskip

\noindent \textbf{Theorem \ref{Theorem generalized Young integral}} \textit{%
Let }$\gamma _{i}:\left[ 0,1\right] \rightarrow \mathcal{V}_{i}$\textit{, }$%
i=1,2$, \textit{be two continuous paths. If there exist }$p>1$\textit{, }$%
q>1 $\textit{, }$p^{-1}+q^{-1}=1$\textit{, and two non-decreasing functions }%
$m_{i}:\left[ 0,1\right] \rightarrow \overline{%
%TCIMACRO{\U{211d} }%
%BeginExpansion
\mathbb{R}
%EndExpansion
^{+}}$\textit{, }$i=1,2$\textit{, satisfying}%
\begin{equation*}
\lim_{t\rightarrow 0}m_{i}\left( t\right) =0\text{, }m_{i}\left( 1\right)
\leq 1\text{, }i=1,2\text{, and }\int_{0}^{1}\frac{m_{1}\left( t\right)
m_{2}\left( t\right) }{t}dt<\infty .
\end{equation*}%
\textit{such that }%
\begin{equation*}
C_{1}:=\sup_{0\leq s<t\leq 1}\frac{\left\Vert \gamma _{1}\left( t\right)
-\gamma _{1}\left( s\right) \right\Vert }{\left\vert t-s\right\vert ^{\frac{1%
}{p}}m_{1}\left( t-s\right) }<\infty \text{, }C_{2}:=\sup_{0\leq s<t\leq 1}%
\frac{\left\Vert \gamma _{2}\left( t\right) -\gamma _{2}\left( s\right)
\right\Vert }{\left\vert t-s\right\vert ^{\frac{1}{q}}m_{2}\left( t-s\right) 
}<\infty .
\end{equation*}%
\textit{Then the Riemann-Stieltjes integral }$\int_{0}^{t}\gamma _{1}\left(
t\right) \otimes d\gamma _{2}\left( t\right) $\textit{, }$t\in \left[ 0,1%
\right] $\textit{, exists, and }%
\begin{equation*}
\left\Vert \int_{0}^{\cdot }\gamma _{1}\left( t\right) \otimes d\gamma
_{2}\left( t\right) \right\Vert _{q-var}\leq 8C_{1}C_{2}\left( 2+\int_{0}^{1}%
\frac{m_{1}\left( t\right) m_{2}\left( t\right) }{t}dt\right) .
\end{equation*}

\bigskip

\begin{proof}
\label{Proof of theorem generalized Young integral}Recall the definition of $%
I\left( \gamma _{1}^{D},\gamma _{2}^{D}\right) $ at $\left( \ref{Definition
area of two paths}\right) $:%
\begin{equation*}
I\left( \gamma _{1}^{D},\gamma _{2}^{D}\right) \left( s,t\right)
=\int_{s}^{t}\left( \gamma _{1}^{D}\left( u\right) -\gamma _{1}^{D}\left(
s\right) \right) \otimes d\gamma _{2}^{D}\left( u\right) \text{, \ }0\leq
s<t\leq 1\text{.}
\end{equation*}%
Denote $I^{D_{i}}:=I\left( \gamma _{1}^{D_{i}},\gamma _{2}^{D_{i}}\right) $, 
$i=1,2\,$. Firstly, we prove that $I^{D}$ converge in $1$-variation as $%
\left\vert D\right\vert \rightarrow 0$.

Since $m_{i}$ are non-decreasing, so ($\omega _{p}$ defined at $\left( \ref%
{Definition vanishing p-variation}\right) $)%
\begin{gather}
\omega _{p}\left( \gamma _{1},\delta \right) \leq C_{1}m_{1}\left( \delta
\right) \text{, }\omega _{q}\left( \gamma _{2},\delta \right) \leq
C_{2}m_{2}\left( \delta \right) \text{;}  \label{generalized Young 1} \\
\text{since }\left\vert m_{i}\right\vert \leq 1\text{ so }\left\Vert \gamma
_{1}\right\Vert _{p-var,\left[ 0,T\right] }\leq C_{1}\text{, }\left\Vert
\gamma _{2}\right\Vert _{q-var,\left[ 0,T\right] }\leq C_{2}\text{.}  \notag
\end{gather}%
Based on Lemma \ref{Lemma difference of 1-var of area}, for any finite
partition $D_{1}\subset D_{2}\subset \left[ 0,1\right] $, if $\left\vert
D_{1}\right\vert \leq \delta $ then%
\begin{eqnarray}
&&\left\Vert I^{D_{1}}-I^{D_{2}}\right\Vert _{1-var}
\label{generlized Young formula} \\
&\leq &\sum_{k}\left\Vert I^{D_{1}}\right\Vert _{1-var,\left[ t_{k},t_{k+1}%
\right] }+\sum_{k}\left\Vert I^{D_{2}}\right\Vert _{1-var,\left[
t_{k},t_{k+1}\right] }  \notag \\
&&+2\omega _{p}\left( \gamma _{1},\delta \right) \left\Vert \gamma
_{2}\right\Vert _{q-var,\left[ 0,1\right] }+2\omega _{q}\left( \gamma
_{2},\delta \right) \left\Vert \gamma _{1}\right\Vert _{p-var,\left[ 0,1%
\right] }.  \notag
\end{eqnarray}%
Combined with $\left( \ref{generalized Young 1}\right) $, we get%
\begin{eqnarray}
&&2\omega _{p}\left( \gamma _{1},\delta \right) \left\Vert \gamma
_{2}\right\Vert _{q-var,\left[ 0,1\right] }+2\omega _{q}\left( \gamma
_{2},\delta \right) \left\Vert \gamma _{1}\right\Vert _{p-var,\left[ 0,1%
\right] }  \label{generalized Young 2} \\
&\leq &2C_{1}C_{2}\left( m_{1}\left( \delta \right) +m_{2}\left( \delta
\right) \right)  \notag
\end{eqnarray}

For $\sum_{k}\left\Vert I^{D_{1}}\right\Vert _{1-var,\left[ t_{k},t_{k+1}%
\right] }$. Since $D_{1}$ is linear on $\left[ t_{k},t_{k+1}\right] $, so%
\begin{eqnarray*}
\left\Vert I^{D_{1}}\right\Vert _{1-var,\left[ t_{k},t_{k+1}\right] } &\leq
&\left\Vert \gamma _{1}\left( t_{k+1}\right) -\gamma _{1}\left( t_{k}\right)
\right\Vert \left\Vert \gamma _{2}\left( t_{k+1}\right) -\gamma _{2}\left(
t_{k}\right) \right\Vert \\
&\leq &\left\Vert \gamma _{1}\right\Vert _{p-var,\left[ t_{k},t_{k+1}\right]
}\left\Vert \gamma _{2}\right\Vert _{q-var,\left[ t_{k},t_{k+1}\right] }%
\text{.}
\end{eqnarray*}%
Therefore, using H\"{o}lder inequality, 
\begin{eqnarray}
&&\sum_{k}\left\Vert I^{D_{1}}\right\Vert _{1-var,\left[ t_{k},t_{k+1}\right]
}  \label{generalized Young 3} \\
&\leq &\left( \sum_{k}\left\Vert \gamma _{1}\right\Vert _{p-var,\left[
t_{k},t_{k+1}\right] }^{p}\right) ^{\frac{1}{p}}\left( \sum_{k}\left\Vert
\gamma _{2}\right\Vert _{q-var,\left[ t_{k},t_{k+1}\right] }^{q}\right) ^{%
\frac{1}{q}}  \notag \\
&\leq &m_{p}\left( \gamma _{1},\delta \right) m_{q}\left( \gamma _{2},\delta
\right) \leq C_{1}C_{2}m_{1}\left( \delta \right) m_{2}\left( \delta \right)
.  \notag
\end{eqnarray}

For $\sum_{k}\left\Vert I^{D_{2}}\right\Vert _{1-var,\left[ t_{k},t_{k+1}%
\right] }$. Applying Lemma \ref{Lemma estimation of area bisecting interval}
to $\left\Vert I^{D_{2}}\right\Vert _{1-var,\left[ t_{k},t_{k+1}\right] }$, $%
\forall k$, then there exists a finite partition $D^{\left( 1\right)
}=\left\{ u_{j}^{1}\right\} _{j}$, $\left\vert D^{\left( 1\right)
}\right\vert \leq 2^{-1}\delta $, s.t.%
\begin{eqnarray*}
&&\sum_{k}\left\Vert I^{D_{2}}\right\Vert _{1-var,\left[ t_{k},t_{k+1}\right]
} \\
&\leq &\sum_{j,u_{j}^{1}\in D^{\left( 1\right) }}\left\Vert
I^{D_{2}}\right\Vert _{1-var,\left[ u_{j}^{1},u_{j+1}^{1}\right]
}+2\sum_{k}\left\Vert \gamma _{1}\right\Vert _{p-var,\left[ t_{k},t_{k+1}%
\right] }\left\Vert \gamma _{2}\right\Vert _{q-var,\left[ t_{k},t_{k+1}%
\right] } \\
&\leq &\sum_{j,u_{j}^{1}\in D^{\left( 1\right) }}\left\Vert
I^{D_{2}}\right\Vert _{1-var,\left[ u_{j}^{1},u_{j+1}^{1}\right] }+2\left(
\sum_{k}\left\Vert \gamma _{1}\right\Vert _{p-var,\left[ t_{k},t_{k+1}\right]
}^{p}\right) ^{\frac{1}{p}}\left( \sum_{k}\left\Vert \gamma _{2}\right\Vert
_{q-var,\left[ t_{k},t_{k+1}\right] }^{q}\right) ^{\frac{1}{q}} \\
&\leq &\sum_{j,u_{j}^{1}\in D^{\left( 1\right) }}\left\Vert
I^{D_{2}}\right\Vert _{1-var,\left[ u_{j}^{1},u_{j+1}^{1}\right]
}+2C_{1}C_{2}m_{1}\left( \delta \right) m_{2}\left( \delta \right) .
\end{eqnarray*}%
Continue the process: applying Lemma \ref{Lemma estimation of area bisecting
interval} to $\left\Vert I^{D_{2}}\right\Vert _{1-var,\left[
u_{j}^{1},u_{j+1}^{1}\right] }$, $\forall j$, then there exists a finite
partition $D^{\left( 2\right) }=\left\{ u_{j}^{2}\right\} $, $\left\vert
D^{\left( 2\right) }\right\vert \leq 2^{-2}\delta $, s.t.%
\begin{equation*}
\sum_{j,u_{j}^{1}\in D^{\left( 1\right) }}\left\Vert I^{D_{2}}\right\Vert
_{1-var,\left[ u_{j}^{1},u_{j+1}^{1}\right] }\leq \sum_{j,u_{j}^{2}\in
D^{\left( 2\right) }}\left\Vert I^{D_{2}}\right\Vert _{1-var,\left[
u_{j}^{2},u_{j+1}^{2}\right] }+2C_{1}C_{2}m_{1}\left( \frac{\delta }{2}%
\right) m_{2}\left( \frac{\delta }{2}\right) .
\end{equation*}%
So on and so forth, and we get\ (for fixed $D_{2}$, $I^{D_{2}}$ is of
vanishing $1$-variation)%
\begin{equation}
\sum_{k}\left\Vert I^{D_{2}}\right\Vert _{1-var,\left[ t_{k},t_{k+1}\right]
}\leq 2C_{1}C_{2}\sum_{n=0}^{\infty }m_{1}\left( \frac{\delta }{2^{n}}%
\right) m_{2}\left( \frac{\delta }{2^{n}}\right) .
\label{generalized Young 4}
\end{equation}%
Since $m_{1}$ and $m_{2}$ are non-decreasing, so when $n\geq 1$,%
\begin{gather*}
m_{1}\left( \frac{\delta }{2^{n}}\right) m_{2}\left( \frac{\delta }{2^{n}}%
\right) \leq \left( \frac{\delta }{2^{n}}\right) ^{-1}\int_{\frac{\delta }{%
2^{n}}}^{\frac{\delta }{2^{n-1}}}m_{1}\left( t\right) m_{2}\left( t\right)
dt\leq 2\int_{\frac{\delta }{2^{n}}}^{\frac{\delta }{2^{n-1}}}\frac{%
m_{1}\left( t\right) m_{2}\left( t\right) }{t}dt. \\
\text{Thus }\sum_{n=0}^{\infty }m_{1}\left( \frac{\delta }{2^{n}}\right)
m_{2}\left( \frac{\delta }{2^{n}}\right) \leq m_{1}\left( \delta \right)
m_{2}\left( \delta \right) +2\int_{0}^{\delta }\frac{m_{1}\left( t\right)
m_{2}\left( t\right) }{t}dt.
\end{gather*}%
Combined with $\left( \ref{generalized Young 4}\right) $, 
\begin{equation}
\sum_{k}\left\Vert I^{D_{2}}\right\Vert _{1-var,\left[ t_{k},t_{k+1}\right] }%
\hspace{-0.02in}\leq \hspace{-0.02in}2C_{1}C_{2}\left( m_{1}\left( \delta
\right) m_{2}\left( \delta \right) \hspace{-0.02in}+\hspace{-0.02in}%
2\int_{0}^{\delta }\frac{m_{1}\left( t\right) m_{2}\left( t\right) }{t}%
dt\right) .  \label{generalized Young 5}
\end{equation}%
Therefore, combine $\left( \ref{generlized Young formula}\right) $, $\left( %
\ref{generalized Young 2}\right) $, $\left( \ref{generalized Young 3}\right) 
$ with $\left( \ref{generalized Young 5}\right) $, we get%
\begin{eqnarray*}
&&\left\Vert I^{D_{1}}-I^{D_{2}}\right\Vert _{1-var} \\
&\leq &C_{1}C_{2}\left( 2\left( m_{1}\left( \delta \right) +m_{2}\left(
\delta \right) \right) +3m_{1}\left( \delta \right) m_{2}\left( \delta
\right) +4\int_{0}^{\delta }\frac{m_{1}\left( t\right) m_{2}\left( t\right) 
}{t}dt\right) .
\end{eqnarray*}

In the above we assume $D_{2}\subset D_{1}$. For two general finite
partitions $D$ and $D^{\prime }$, $\left\vert D\right\vert \vee \left\vert
D^{\prime }\right\vert \leq \delta $, denote $D^{\prime \prime }:=D\cup
D^{\prime }$, apply our estimates to $D$, $D^{\prime \prime }$ and $%
D^{\prime }$, $D^{\prime \prime }$, we get%
\begin{eqnarray*}
&&\left\Vert I^{D}-I^{D^{\prime }}\right\Vert _{1-var} \\
&\leq &2C_{1}C_{2}\left( 2\left( m_{1}\left( \delta \right) +m_{2}\left(
\delta \right) \right) +3m_{1}\left( \delta \right) m_{2}\left( \delta
\right) +4\int_{0}^{\delta }\frac{m_{1}\left( t\right) m_{2}\left( t\right) 
}{t}dt\right) .
\end{eqnarray*}%
Because we assumed that $\lim_{t\rightarrow 0}m_{i}\left( t\right) =0$ and $%
\int_{0}^{1}\frac{m_{1}\left( t\right) m_{2}\left( t\right) }{t}dt<\infty $,
so the Riemann-Stieltjes integral $I\left( \gamma _{1},\gamma _{2}\right) $
exists, $I\left( \gamma _{1}^{D},\gamma _{2}^{D}\right) $ converge in $1$%
-variation to $I\left( \gamma _{1},\gamma _{2}\right) $ as $\left\vert
D\right\vert \rightarrow 0$, and ($\left\vert m_{i}\right\vert \leq 1$, $%
i=1,2$)%
\begin{equation*}
\sup_{D}\left\Vert I\left( \gamma _{1},\gamma _{2}\right) -I\left( \gamma
_{1}^{D},\gamma _{2}^{D}\right) \right\Vert _{1-var}\leq 2C_{1}C_{2}\left(
7+4\int_{0}^{1}\frac{m_{1}\left( t\right) m_{2}\left( t\right) }{t}dt\right)
.
\end{equation*}%
Moreover, if denote finite partition $D_{0}:=\left\{ 0,1\right\} $ then%
\begin{equation*}
\left\Vert I^{D_{0}}\right\Vert _{1-var}\leq \left\Vert \left( \gamma
_{1}\left( 1\right) -\gamma _{1}\left( 0\right) \right) \otimes \left(
\gamma _{2}\left( 1\right) -\gamma _{2}\left( 0\right) \right) \right\Vert
\leq C_{1}C_{2}.
\end{equation*}%
\begin{equation}
\text{Thus, }\left\Vert I\left( \gamma _{1},\gamma _{2}\right) \right\Vert
_{1-var}\leq C_{1}C_{2}\left( 15+8\int_{0}^{1}\frac{m_{1}\left( t\right)
m_{2}\left( t\right) }{t}dt\right)
\label{estimation of 1-var of I(gamma1, gamma2)}
\end{equation}

Then we work out $\left\Vert \int_{0}^{\cdot }\gamma _{1}\left( u\right)
\otimes d\gamma _{2}\left( u\right) \right\Vert _{q-var}$ from $\left\Vert
I\left( \gamma _{1},\gamma _{2}\right) \right\Vert _{1-var}$. Since%
\begin{eqnarray*}
I\left( \gamma _{1},\gamma _{2}\right) \left( s,t\right)
&:&=\int_{s}^{t}\left( \gamma _{1}\left( u\right) -\gamma _{1}\left(
s\right) \right) \otimes d\gamma _{2}\left( u\right) \\
&=&\int_{s}^{t}\gamma _{1}\left( u\right) \otimes d\gamma _{2}\left(
u\right) -\gamma _{1}\left( s\right) \otimes \left( \gamma _{2}\left(
t\right) -\gamma _{2}\left( s\right) \right)
\end{eqnarray*}%
Therefore, if define function $\beta :\bigtriangleup _{\left[ 0,1\right]
}\rightarrow \mathcal{V}_{1}\otimes \mathcal{V}_{2}$ by setting 
\begin{equation*}
\beta \left( s,t\right) :=\gamma _{1}\left( s\right) \otimes \left( \gamma
_{2}\left( t\right) -\gamma _{2}\left( s\right) \right) \text{, }\forall
\left( s,t\right) \in \bigtriangleup _{\left[ 0,1\right] }\text{.}
\end{equation*}%
Then 
\begin{equation*}
\left\Vert \beta \right\Vert _{q-var}\leq \left\Vert \gamma _{1}\right\Vert
_{\infty -var}\left\Vert \gamma _{2}\right\Vert _{q-var}\leq C_{1}C_{2}.
\end{equation*}%
Thus, combined with $\left( \ref{estimation of 1-var of I(gamma1, gamma2)}%
\right) $, we get 
\begin{eqnarray*}
\left\Vert \int_{0}^{\cdot }\gamma _{1}\left( u\right) \otimes d\gamma
_{2}\left( u\right) \right\Vert _{q-var} &\leq &\left\Vert I\left( \gamma
_{1},\gamma _{2}\right) \right\Vert _{1-var}+\left\Vert \beta \right\Vert
_{q-var} \\
&\leq &8C_{1}C_{2}\left( 2+\int_{0}^{1}\frac{m_{1}\left( t\right)
m_{2}\left( t\right) }{t}dt\right) .
\end{eqnarray*}%
Proof finishes.
\end{proof}

When $m_{1}\left( t\right) =t^{a}$, $m_{2}\left( t\right) =t^{b}$, $a>0$, $%
b>0$, we get Young integral.

The condition $\int_{0}^{1}\frac{m_{1}\left( t\right) m_{2}\left( t\right) }{%
t}dt<\infty $ is necessary in the sense of following example.

\bigskip

\noindent \textbf{Example} \textbf{\ref{Example integration finite is
necessary}}\ \ \ \textit{Suppose }$m_{i}:\left[ 0,1\right] \rightarrow 
\overline{%
%TCIMACRO{\U{211d} }%
%BeginExpansion
\mathbb{R}
%EndExpansion
^{+}}$\textit{\ are two non-decreasing functions, satisfying }$%
\lim_{t\rightarrow 0}m_{i}\left( t\right) =0$\textit{, }$\left\vert
m_{i}\right\vert \leq 1$\textit{, }$i=1,2$\textit{, and }$\int_{0}^{1}\frac{%
m_{1}\left( t\right) m_{2}\left( t\right) }{t}dt=\infty $\textit{. Then for
any }$p>1$\textit{, }$q>1$\textit{, }$p^{-1}+q^{-1}=1$\textit{, there exist
two continuous real-valued paths }$\gamma _{i}:\left[ 0,1\right] \rightarrow 
%TCIMACRO{\U{211d} }%
%BeginExpansion
\mathbb{R}
%EndExpansion
$\textit{, }$i=1,2$\textit{, s.t. }%
\begin{equation}
C_{1}:=\sup_{0\leq s<t\leq 1}\frac{\left\vert \gamma _{1}\left( t\right)
-\gamma _{1}\left( s\right) \right\vert }{\left\vert t-s\right\vert ^{\frac{1%
}{p}}m_{1}\left( t-s\right) }<\infty \text{, }C_{2}:=\sup_{0\leq s<t\leq 1}%
\frac{\left\vert \gamma _{2}\left( t\right) -\gamma _{2}\left( s\right)
\right\vert }{\left\vert t-s\right\vert ^{\frac{1}{q}}m_{2}\left( t-s\right) 
}<\infty ,  \label{Condition on gammai}
\end{equation}%
\textit{but the Riemann-Stieltjes integral }$\int_{0}^{1}\gamma _{1}\left(
t\right) d\gamma _{2}\left( t\right) $\textit{\ does not exist.}

\bigskip

\begin{proof}
\label{Proof of Example integration finite is necessary}Let $\epsilon _{k}=1$
or $-1$, $\forall k$, and define%
\begin{eqnarray*}
\gamma _{1}\left( t\right) &=&\sum_{k=1}^{\infty }\frac{m_{1}\left(
2^{-2k}\right) }{2^{\frac{2k}{p}}}\cos \left( 2\pi 2^{2k}t\right) \text{, }%
t\in \left[ 0,1\right] \text{,} \\
\gamma _{2}\left( t\right) &=&\sum_{k=1}^{\infty }\epsilon _{k}\frac{%
m_{2}\left( 2^{-2k}\right) }{2^{\frac{2k}{q}}}\sin \left( 2\pi
2^{2k}t\right) \text{, }t\in \left[ 0,1\right] \text{.}
\end{eqnarray*}%
Then $\gamma _{i}$ satisfy $\left( \ref{Condition on gammai}\right) $. Take $%
\gamma _{1}$ as an example. For $0\leq s<t\leq 1$, let $n=\left[ \log _{4}%
\frac{4}{\left\vert t-s\right\vert }\right] $, we have%
\begin{eqnarray}
&&\left\vert \gamma _{1}\left( t\right) -\gamma _{1}\left( s\right)
\right\vert  \label{inner modulus of continuity} \\
&\leq &2\pi \left( \sum_{k=1}^{n}m_{1}\left( 2^{-2k}\right) 2^{2\left( 1-%
\frac{1}{p}\right) k}\right) \left\vert t-s\right\vert
+2\sum_{k=n+1}^{\infty }\frac{m_{1}\left( 2^{-2k}\right) }{2^{\frac{2k}{p}}}.
\notag
\end{eqnarray}%
Since $\lim_{t\rightarrow 0}\frac{m_{1}\left( 4t\right) }{m_{1}\left(
t\right) }=1$ ($\int_{0}^{1}\frac{m_{i}\left( t\right) }{t}dt\geq
\int_{0}^{1}\frac{m_{1}\left( t\right) m_{2}\left( t\right) }{t}dt=\infty $
so $\lim_{t\rightarrow 0}\frac{m_{i}\left( t\right) }{\left( \ln \frac{1}{t}%
\right) ^{-2}}=\infty $), so using L'Hospital's rule, 
\begin{eqnarray*}
&&\lim_{n\rightarrow \infty }\frac{m_{1}\left( 2^{-2n}\right) 2^{2\left( 1-%
\frac{1}{p}\right) n}}{\sum_{k=1}^{n}m_{1}\left( 2^{-2k}\right) 2^{2\left( 1-%
\frac{1}{p}\right) k}} \\
&=&\lim_{n\rightarrow \infty }\frac{m_{1}\left( 2^{-2n}\right) 2^{2\left( 1-%
\frac{1}{p}\right) n}-m_{1}\left( 2^{-2\left( n-1\right) }\right) 2^{2\left(
1-\frac{1}{p}\right) \left( n-1\right) }}{m_{1}\left( 2^{-2n}\right)
2^{2\left( 1-\frac{1}{p}\right) n}} \\
&=&\lim_{t\rightarrow 0}\frac{m_{1}\left( t\right) 2^{2\left( 1-\frac{1}{p}%
\right) }-m_{1}\left( 4t\right) }{m_{1}\left( t\right) 2^{2\left( 1-\frac{1}{%
p}\right) }}=\frac{2^{2\left( 1-\frac{1}{p}\right) }-1}{2^{2\left( 1-\frac{1%
}{p}\right) }}.
\end{eqnarray*}%
Therefore, there exists constant $C_{1}$, s.t. for any $n\geq 1$, 
\begin{equation*}
\sum_{k=1}^{n}m_{1}\left( 2^{-2k}\right) 2^{2\left( 1-\frac{1}{p}\right)
k}\leq C_{1}m_{1}\left( 2^{-2n}\right) 2^{2\left( 1-\frac{1}{p}\right) n},
\end{equation*}%
Continue with $\left( \ref{inner modulus of continuity}\right) $, since $%
m_{1}$ is non-decreasing ($n=\left[ \log _{4}\frac{4}{t-s}\right] $ so $%
2^{-2n}<\left\vert t-s\right\vert \leq 2^{-2\left( n-1\right) }$)%
\begin{eqnarray*}
\left\vert \gamma _{1}\left( t\right) -\gamma _{1}\left( s\right)
\right\vert &\leq &2\pi C_{1}m_{1}\left( 2^{-2n}\right) 2^{2\left( 1-\frac{1%
}{p}\right) n}\left\vert t-s\right\vert +\frac{2}{2^{\frac{2}{p}}-1}%
m_{1}\left( 2^{-2n}\right) 2^{-\frac{2}{p}n} \\
&\leq &\left( 8\pi C_{1}+\frac{2}{2^{\frac{2}{p}}-1}\right) \left\vert
t-s\right\vert ^{\frac{1}{p}}m_{1}\left( t-s\right) .
\end{eqnarray*}

Then we prove the Riemann-Stieltjes integral $\int_{0}^{1}\gamma _{1}\left(
t\right) d\gamma _{2}\left( t\right) $ does not exist. First, the limit of
Riemann sum as $\left\vert D\right\vert \rightarrow 0$ does not depend on
the selection of representative points, because $\gamma _{1}\in C^{0,p-var}$%
, $\gamma _{2}\in C^{0,q-var}$. Actually, since $\gamma _{i}$ satisfy $%
\left( \ref{Condition on gammai}\right) $ and $m_{i}$ are non-decreasing, so 
$\omega _{p}\left( \gamma _{1},\delta \right) \leq C_{1}m_{1}\left( \delta
\right) $ and $\omega _{q}\left( \gamma _{2},\delta \right) \leq
C_{2}m_{2}\left( \delta \right) $. Suppose $D=\left\{ t_{k}\right\} $ is a
finite partition of $\left[ 0,1\right] $, then the error occurred to the
Riemann sum of $\int_{0}^{1}\gamma _{1}\left( t\right) d\gamma _{2}\left(
t\right) $ w.r.t. $D$ from selecting different representative points is
bounded by%
\begin{eqnarray*}
&&\sum_{k}\left\vert \gamma _{1}\left( t_{k+1}\right) -\gamma _{1}\left(
t_{k}\right) \right\vert \left\vert \gamma _{2}\left( t_{k+1}\right) -\gamma
_{2}\left( t_{k}\right) \right\vert \\
&\leq &\left( \sum_{k}\left\vert \gamma _{1}\left( t_{k+1}\right) -\gamma
_{1}\left( t_{k}\right) \right\vert ^{p}\right) ^{\frac{1}{p}}\left(
\sum_{k}\left\vert \gamma _{2}\left( t_{k+1}\right) -\gamma _{2}\left(
t_{k}\right) \right\vert ^{q}\right) ^{\frac{1}{q}} \\
&\leq &C_{1}C_{2}m_{1}\left( \left\vert D\right\vert \right) m_{2}\left(
\left\vert D\right\vert \right) ,
\end{eqnarray*}%
which tends to zero as $\left\vert D\right\vert \rightarrow 0$. On the other
hand, since%
\begin{eqnarray*}
&&\sum_{k,}\frac{1}{2}\left( \gamma _{1}\left( t_{k+1}\right) +\gamma
_{1}\left( t_{k}\right) \right) \left( \gamma _{2}\left( t_{k+1}\right)
-\gamma _{2}\left( t_{k}\right) \right) \\
&=&\frac{1}{2}\sum_{k}\left( \gamma _{1}\left( t_{k}\right) \gamma
_{2}\left( t_{k+1}\right) -\gamma _{2}\left( t_{k}\right) \gamma _{1}\left(
t_{k+1}\right) \right) +\frac{1}{2}\gamma _{1}\left( 1\right) \gamma
_{2}\left( 1\right) -\frac{1}{2}\gamma _{1}\left( 0\right) \gamma _{2}\left(
0\right) \text{,}
\end{eqnarray*}%
so the existence of Riemann-Stieltjes integral $\int_{0}^{1}\gamma
_{1}\left( t\right) d\gamma _{2}\left( t\right) $ is equivalent to the
existence of%
\begin{equation*}
\lim_{\left\vert D\right\vert \rightarrow 0}\sum_{k,t_{k}\in D}\left( \gamma
_{1}\left( t_{k}\right) \gamma _{2}\left( t_{k+1}\right) -\gamma _{2}\left(
t_{k}\right) \gamma _{1}\left( t_{k+1}\right) \right) .
\end{equation*}%
Similar as the estimates in Example \ref{Example non-existence of Reimann
integral}, if denote finite partition $D_{2N}=\left\{ t_{l}^{N}\right\} $
where $t_{l}^{N}:=l2^{-2N}$, $l=0,1,\dots ,2^{2N}$, we get%
\begin{align}
\left\langle \int \gamma _{1}d\gamma _{2},D_{2N}\right\rangle &
:=\sum_{l=0}^{2^{2N}-1}\left( \gamma _{1}\left( t_{l}^{N}\right) \gamma
_{2}\left( t_{l+1}^{N}\right) -\gamma _{2}\left( t_{l}^{N}\right) \gamma
_{1}\left( t_{l+1}^{N}\right) \right)  \label{inner expression 1} \\
& =\sum_{k=1}^{N-1}\epsilon _{k}\frac{m_{1}\left( 2^{-2k}\right) m_{2}\left(
2^{-2k}\right) }{2^{2k-2N}}\sin \left( 2\pi 2^{2k-2N}\right) .  \notag
\end{align}%
While since $m_{i}$ are non-decreasing, so for any $k\geq 1$, 
\begin{equation*}
m_{1}\left( 2^{-2k}\right) m_{2}\left( 2^{-2k}\right) \geq \frac{1}{3}%
\int_{2^{-2\left( k+1\right) }}^{2^{-2k}}\frac{m_{1}\left( t\right)
m_{2}\left( t\right) }{t}dt,
\end{equation*}%
so based on our assumption $\int_{0}^{1}\frac{m_{1}\left( t\right)
m_{2}\left( t\right) }{t}dt=\infty $, we have 
\begin{equation*}
\sum_{k=1}^{\infty }m_{1}\left( 2^{-2k}\right) m_{2}\left( 2^{-2k}\right)
=\infty \text{.}
\end{equation*}%
Thus, since $m_{i}$ are non-decreasing and $\lim_{t\rightarrow 0}m_{i}\left(
t\right) =0$, so using exactly the same estimates as in Example \ref{Example
non-existence of Reimann integral}, for any sequence of strictly increasing
integers $\left\{ l_{n}\right\} $ satisfying for some $c>\pi $ 
\begin{gather*}
c^{n}\leq \sum_{k=l_{n}}^{l_{n+1}-1}m_{1}\left( 2^{-2k}\right) m_{2}\left(
2^{-2k}\right) \leq c^{n}+1\text{, }\forall n\geq 1, \\
\text{we let }\epsilon _{k}=\left( -1\right) ^{n}\text{, when }l_{n}\leq
k\leq l_{n+1}-1\text{.}
\end{gather*}%
Then, for any $a\in \left[ -\infty ,\infty \right] $, there exists a finite
partition $\left\{ D_{n}^{a}\right\} _{n}\subset \left\{ D_{2N}\right\} _{N}$%
, $\lim_{n\rightarrow \infty }\left\vert D_{n}^{a}\right\vert =0$, but $%
\lim_{n\rightarrow \infty }\left\langle \int \gamma _{1}d\gamma
_{2},D_{n}^{a}\right\rangle =a$.
\end{proof}

Next, we want to prove that a vanishing $2$-variation path $\gamma $ can be
enhanced into a geometric $2$-rough path, if and only if $A\left( \gamma
^{D}\right) $ (the areas of piecewisely linear approximation) converge in $1$%
-variation as $\left\vert D\right\vert \rightarrow 0$.

\begin{lemma}
\label{Lemma difference of 1-var of area copy(1)}Suppose $\gamma \in
C^{0,2-var}\left( \left[ 0,T\right] ,\mathcal{V}\right) $. $D_{1}=\left\{
t_{k}\right\} _{k}$ and $D_{2}=\left\{ s_{j}\right\} _{j}$ are two finite
partitions of $\left[ 0,T\right] $, and $D_{2}$ is a refinement of $D_{1}$,
i.e. for any $k$, there exist integers $n_{k}<n_{k+1}$, s.t. $%
t_{k}=s_{n_{k}}<s_{n_{k}+1}<\dots <s_{n_{k+1}}=t_{k+1}$. Then if $\left\vert
D_{1}\right\vert \leq \delta $, we have 
\begin{equation*}
\left\Vert A\left( \gamma ^{D_{1}}\right) -A\left( \gamma ^{D_{2}}\right)
\right\Vert _{1-var}\leq \sum_{k}\left\Vert A\left( \gamma ^{D_{2}}\right)
\right\Vert _{1-var,\left[ t_{k},t_{k+1}\right] }+4\left\Vert \gamma
\right\Vert _{2-var,\left[ 0,T\right] }\omega _{2}\left( \gamma ,\delta
\right) .
\end{equation*}
\end{lemma}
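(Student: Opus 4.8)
The plan is to mirror the proof of Lemma~\ref{Lemma difference of 1-var of area}, now for the area $A(\gamma)=A(\gamma,\gamma)$ in place of the iterated integral $I$, using the bilinearity of $A(\cdot,\cdot)$ (Definition~\ref{Definition area of two paths}) together with the multiplicativity of the area. Writing $\bigtriangleup\gamma:=\gamma^{D_1}-\gamma^{D_2}$, I first record that $\bigtriangleup\gamma$ vanishes at every point of $D_1$: since $D_2$ refines $D_1$ and both interpolants coincide with $\gamma$ on $D_1$, we have $\bigtriangleup\gamma(t_k)=0$ for all $k$. Because $A(\cdot,\cdot)$ is bilinear, the telescoping identity
\[
A(\gamma^{D_1})-A(\gamma^{D_2})=A(\bigtriangleup\gamma,\gamma^{D_1})+A(\gamma^{D_2},\bigtriangleup\gamma)
\]
holds pointwise on $\bigtriangleup_{\left[0,T\right]}$, obtained by inserting $\pm A(\gamma^{D_1},\gamma^{D_2})$.

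Next I fix $(s,t)$ with $t_{k_1-1}<s\le t_{k_1}\le t_{k_2}\le t<t_{k_2+1}$ and expand each bilinear term by the area's multiplicativity $A(\gamma_1,\gamma_2)(s,t)=A(\gamma_1,\gamma_2)(s,u)+A(\gamma_1,\gamma_2)(u,t)+\tfrac12[\gamma_1(u)-\gamma_1(s),\gamma_2(t)-\gamma_2(u)]$ (immediate from Definition~\ref{Definition area of two paths} by splitting the integration simplex at $u$), subdividing at $t_{k_1}$ and then at $t_{k_2}$. Every Chen cross term whose bracket slot evaluates $\bigtriangleup\gamma$ at a point of $D_1$ drops out because $\bigtriangleup\gamma(t_k)=0$, and writing $\bigtriangleup A:=A(\gamma^{D_1})-A(\gamma^{D_2})$ what survives is exactly
\[
\bigtriangleup A(s,t)=\bigtriangleup A(s,t_{k_1})+\bigtriangleup A(t_{k_1},t_{k_2})+\bigtriangleup A(t_{k_2},t)+R(s,t),
\]
with $R(s,t)=-\tfrac12[\bigtriangleup\gamma(s),\gamma^{D_1}(t)-\gamma^{D_1}(t_{k_1})]+\tfrac12[\gamma^{D_2}(t_{k_2})-\gamma^{D_2}(s),\bigtriangleup\gamma(t)]$. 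The same bracket‑vanishing argument applied inside $[t_{k_1},t_{k_2}]$ telescopes the middle term into $\sum_{j=k_1}^{k_2-1}\bigtriangleup A(t_j,t_{j+1})$, so $\bigtriangleup A(s,t)$ is controlled by contributions of subintervals each contained in some $[t_k,t_{k+1}]$ plus the remainder $R(s,t)$.

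I would then sum over an arbitrary finite partition of $[0,T]$ and take the supremum. The subinterval contributions give $\sum_k\|\bigtriangleup A\|_{1-var,[t_k,t_{k+1}]}$, and here the crucial simplification enters: $\gamma^{D_1}$ is a straight segment on each $[t_k,t_{k+1}]$, with constant velocity $v$, so its area vanishes since $[v,v]=0$; hence $\|\bigtriangleup A\|_{1-var,[t_k,t_{k+1}]}=\|A(\gamma^{D_2})\|_{1-var,[t_k,t_{k+1}]}$. This is precisely what collapses the two variation sums of Lemma~\ref{Lemma difference of 1-var of area} to the single term stated here. For the remainders, using $\|[u,w]\|\le 2\|u\|\|w\|$ and the Cauchy--Schwarz (Hölder with $p=q=2$) estimate verbatim as in $\left(\ref{inner estimation1}\right)$, the summed remainder is bounded by $\big(\sum_k\|\bigtriangleup\gamma\|_{2-var,[t_k,t_{k+1}]}^2\big)^{1/2}\big(\|\gamma^{D_1}\|_{2-var}+\|\gamma^{D_2}\|_{2-var}\big)$. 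Invoking $\|\gamma^{D_i}\|_{2-var}\le\|\gamma\|_{2-var}$ and $\|\bigtriangleup\gamma\|_{2-var,[t_k,t_{k+1}]}\le 2\|\gamma\|_{2-var,[t_k,t_{k+1}]}$ (the analogues of $\left(\ref{inner estimation2}\right)$ and $\left(\ref{inner estimation3}\right)$), together with the mesh bound $\sum_k\|\gamma\|_{2-var,[t_k,t_{k+1}]}^2\le\omega_2(\gamma,\delta)^2$ (valid since $|D_1|\le\delta$ and a simultaneous refinement of all subintervals still has mesh $\le\delta$), this reduces to $4\|\gamma\|_{2-var}\,\omega_2(\gamma,\delta)$, yielding the claim.

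The Young/Hölder bookkeeping for the remainder is routine and essentially identical to the earlier lemma, so I would simply cite that computation. The step needing the most care---and the only genuinely new ingredient---is tracking which Chen cross terms survive repeated subdivision once one notes $\bigtriangleup\gamma(t_k)=0$, combined with the observation that $A(\gamma^{D_1})\equiv 0$ on each $D_1$‑segment. It is this vanishing of the area (as opposed to the nonvanishing symmetric iterated integral) that converts the two variation sums appearing in Lemma~\ref{Lemma difference of 1-var of area} into the single $\sum_k\|A(\gamma^{D_2})\|_{1-var,[t_k,t_{k+1}]}$ required here.
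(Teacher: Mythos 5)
Your proposal is correct and follows essentially the same route as the paper, whose proof of this lemma is precisely the observation that the argument of Lemma~\ref{Lemma difference of 1-var of area} carries over verbatim with $p=q=2$ and $\gamma_1=\gamma_2=\gamma$, using $\left\Vert \left[ u,v\right] \right\Vert \leq 2\left\Vert u\right\Vert \left\Vert v\right\Vert $ for the cross terms and noting that $\sum_{k}\left\Vert A\left( \gamma ^{D_{1}}\right) \right\Vert _{1-var,\left[ t_{k},t_{k+1}\right] }=0$ since $\gamma ^{D_{1}}$ is linear on each $\left[ t_{k},t_{k+1}\right] $. Your reconstruction of the bilinear decomposition, the surviving Chen cross terms, and the H\"{o}lder bookkeeping is exactly the intended adaptation.
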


\begin{proof}
Almost the same as that of Lemma \ref{Lemma difference of 1-var of area}
when $p=q=2$, by using $\left\Vert \left[ u,v\right] \right\Vert \leq
2\left\Vert u\right\Vert \left\Vert v\right\Vert $. $\sum_{k}\left\Vert
A^{D_{1}}\right\Vert _{1-var,\left[ t_{k},t_{k+1}\right] }=0$ because $%
\gamma ^{D_{1}}$ is linear on $\left[ t_{k},t_{k+1}\right] $, $\forall k$.
\end{proof}

\begin{lemma}
\label{Lemma relations path and area}Suppose $\left( \gamma ,\alpha \right) $
is a weak geometric $2$-rough path, and $D=\left\{ t_{k}\right\} _{k=0}^{n}$
is a finite partition of $\left[ 0,T\right] $. Then%
\begin{gather*}
A\left( \gamma ^{D}\right) \left( 0,T\right) =\frac{1}{2}\sum_{k=0}^{n-1}%
\left[ \gamma \left( t_{k}\right) ,\gamma \left( t_{k+1}\right) \right] -%
\frac{1}{2}\left[ \gamma \left( 0\right) ,\gamma \left( T\right) \right] , \\
\alpha \left( 0,T\right) =\sum_{k=0}^{n-1}\alpha \left( t_{k},t_{k+1}\right)
+A\left( \gamma ^{D}\right) \left( 0,T\right) .
\end{gather*}
\end{lemma}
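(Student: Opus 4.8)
The plan is to derive both identities from a single mechanism: iterate the multiplicativity relation $(\ref{property of multiplicativity})$ across the partition $D$ and telescope, using that a straight segment carries no area. Throughout I write $x_k:=\gamma(t_k)=\gamma^D(t_k)$.

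For the first identity, I would first note that since $\gamma^D$ is a continuous bounded variation path, $(\gamma^D,A(\gamma^D))$ is itself a weak geometric $2$-rough path, so $A(\gamma^D)$ obeys $(\ref{property of multiplicativity})$. On each subinterval $[t_k,t_{k+1}]$ the path $\gamma^D$ is a line segment, so $d\gamma^D$ has a fixed direction and $[d\gamma^D(u_1),d\gamma^D(u_2)]$ vanishes identically; hence $A(\gamma^D)(t_k,t_{k+1})=0$ for every $k$. Applying $(\ref{property of multiplicativity})$ to pass from $A(\gamma^D)(0,t_m)$ to $A(\gamma^D)(0,t_{m+1})$, the single-segment area drops out and the only surviving contribution is the cross term $\tfrac12[x_m-x_0,x_{m+1}-x_m]$. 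Summing over $m$ gives $A(\gamma^D)(0,T)=\tfrac12\sum_{k=0}^{n-1}[x_k-x_0,x_{k+1}-x_k]$, and a short telescoping of the $[x_0,\,\cdot\,]$ terms (using $[x_m,x_m]=0$) rewrites this as $\tfrac12\sum_{k=0}^{n-1}[x_k,x_{k+1}]-\tfrac12[x_0,x_n]$, which is the claimed formula. This is exactly the direct computation already recorded at $(\ref{Reimann sums and area of piecewise linear path})$, so one may alternatively simply cite that line and rearrange.

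For the second identity, I would run the identical telescoping, now on $\alpha$ itself via $(\ref{property of multiplicativity})$ for the given weak geometric $2$-rough path $(\gamma,\alpha)$. Setting $B_m:=\alpha(0,t_m)$, the relation gives $B_{m+1}=B_m+\alpha(t_m,t_{m+1})+\tfrac12[x_m-x_0,x_{m+1}-x_m]$, so that $\alpha(0,T)=\sum_{k=0}^{n-1}\alpha(t_k,t_{k+1})+\tfrac12\sum_{k=0}^{n-1}[x_k-x_0,x_{k+1}-x_k]$. The accumulated bracket sum is precisely the quantity identified with $A(\gamma^D)(0,T)$ in the first part, which yields $\alpha(0,T)=\sum_{k=0}^{n-1}\alpha(t_k,t_{k+1})+A(\gamma^D)(0,T)$.

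There is no genuinely hard step: the content is entirely the additive (Chen) structure of $(\ref{property of multiplicativity})$ together with the vanishing of a straight segment's area. The only point demanding a moment's care is the bookkeeping of the telescoping bracket sum, and the cleanest route is a single induction on $n$ proving both displayed identities simultaneously, with base case $n=1$ reducing to $A(\gamma^D)(0,T)=0$ and $\alpha(0,T)=\alpha(t_0,t_1)$ because the $m=0$ cross term $[x_0-x_0,x_1-x_0]$ is zero.
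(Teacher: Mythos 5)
Your proposal is correct and matches the paper's (one-line) proof in substance: the paper obtains the first identity by direct computation --- exactly the rearrangement of $(\ref{Reimann sums and area of piecewise linear path})$ that you point to as the alternative route --- and the second by iterating the multiplicativity relation $(\ref{property of multiplicativity})$ over the partition, which is precisely your telescoping argument. Your variant derivation of the first identity from the multiplicativity of $(\gamma^{D},A(\gamma^{D}))$ together with the vanishing of single-segment areas is a harmless reformulation, and the bracket bookkeeping ($\sum_{k}[x_{k}-x_{0},x_{k+1}-x_{k}]=\sum_{k}[x_{k},x_{k+1}]-[x_{0},x_{n}]$) checks out.
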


\begin{proof}
The first is obtained from directly computation, the second is got by using
multiplicativity of $\left( \gamma ,\alpha \right) $ (i.e.$\left( \ref%
{property of multiplicativity}\right) $).
\end{proof}

\medskip \medskip

\noindent \textbf{Theorem} \textbf{\ref{Theorem condition to be enhancible}}%
\ \ \ \textit{Suppose }$\gamma \in C^{0,2-var}\left( \left[ 0,T\right] ,%
\mathcal{V}\right) $\textit{. Then }$\gamma \in \mathcal{G}_{2}\left( 
\mathcal{V}\right) $\textit{\ if and only if }$A\left( \gamma ^{D}\right) $%
\textit{\ converges in }$1$\textit{-variation as }$\left\vert D\right\vert
\rightarrow 0$\textit{.}

\bigskip

\begin{proof}
\label{Proof of Theorem condition to be enhancible}$\Leftarrow $ is clear;
we prove $\Rightarrow $. \ Suppose $\left( \gamma ,\alpha \right) $ is a
geometric $2$-rough path, so $\gamma $ is of vanishing $2$-variation, $%
\alpha $ is of vanishing $1$-variation. Thus, for any $\epsilon >0$, there
exists $\delta >0$, s.t. for any finite partition $D$ of $\left[ 0,T\right] $
satisfying $\left\vert D\right\vert \leq \delta $, $\sum_{k,t_{k}\in
D}\left\Vert \gamma \right\Vert _{2-var,\left[ t_{k},t_{k+1}\right]
}^{2}<\epsilon $ and $\sum_{k,t_{k}\in D}\left\Vert \alpha \left(
t_{k},t_{k+1}\right) \right\Vert <\epsilon $.

Suppose $D_{1}=\left\{ t_{k}\right\} _{k}$ and $D_{2}=\left\{ s_{j}\right\}
_{j}$ are two finite partitions of $\left[ 0,T\right] $ satisfying $%
\left\vert D_{1}\right\vert \leq \delta $, $\left\vert D_{2}\right\vert \leq
\delta $, $D_{2}$ is a refinement of $D_{1}$. Based on Lemma \ref{Lemma
difference of 1-var of area copy(1)}, 
\begin{equation*}
\left\Vert A\left( \gamma ^{D_{1}}\right) -A\left( \gamma ^{D_{2}}\right)
\right\Vert _{1-var}\leq \sum_{k}\left\Vert A\left( \gamma ^{D_{2}}\right)
\right\Vert _{1-var,\left[ t_{k},t_{k+1}\right] }+4\left\Vert \gamma
\right\Vert _{2-var,\left[ 0,T\right] }\epsilon ^{\frac{1}{2}}\text{.}
\end{equation*}%
For $\sum_{k}\left\Vert A\left( \gamma ^{D_{2}}\right) \right\Vert _{1-var,%
\left[ t_{k},t_{k+1}\right] }$. Since $\gamma ^{D_{2}}$ is a piecewisely
linear path on each $\left[ t_{k},t_{k+1}\right] $, so we only consider
finite partitions, whose points are all "corner" points. Suppose $D_{3}$ is
a finite partition satisfying $D_{1}\subset D_{3}=\left\{ u_{i}\right\}
\subset D_{2}$. Suppose $u_{i}=s_{m_{i}}<s_{m_{i}+1}<\dots
<s_{m_{i+1}}=u_{i+1}$, then based on Lemma \ref{Lemma relations path and
area}, for each $i$,%
\begin{equation*}
\left\Vert A\left( \gamma ^{D_{2}}\right) \left( u_{i},u_{i+1}\right)
\right\Vert \leq \left\Vert \alpha \left( u_{i},u_{i+1}\right) \right\Vert
+\sum_{j=m_{i}}^{m_{i+1}-1}\left\Vert \alpha \left( s_{j},s_{j+1}\right)
\right\Vert
\end{equation*}%
Sum over $i$, then%
\begin{equation*}
\sum_{i,u_{i}\in D_{3}}\left\Vert A\left( \gamma ^{D_{2}}\right) \left(
u_{i},u_{i+1}\right) \right\Vert \leq \sum_{i,u_{i}\in D_{3}}\left\Vert
\alpha \left( u_{i},u_{i+1}\right) \right\Vert
+\sum_{i}\sum_{j=m_{i}}^{m_{i+1}-1}\left\Vert \alpha \left(
s_{j},s_{j+1}\right) \right\Vert .
\end{equation*}%
Since $\left\vert D_{2}\right\vert \leq \left\vert D_{3}\right\vert \leq
\left\vert D_{1}\right\vert \leq \delta $, so as we assumed, $%
\sum_{i,u_{i}\in D_{3}}\left\Vert \alpha \left( u_{i},u_{i+1}\right)
\right\Vert <\epsilon $, $\sum_{i}\sum_{j=m_{i}}^{m_{i+1}-1}\left\Vert
\alpha \left( s_{j},s_{j+1}\right) \right\Vert =\sum_{j,s_{j}\in
D_{2}}\left\Vert \alpha \left( s_{j},s_{j+1}\right) \right\Vert <\epsilon $.
Thus%
\begin{equation*}
\sum_{i,u_{i}\in D_{3}}\left\Vert A\left( \gamma ^{D_{2}}\right) \left(
u_{i},u_{i+1}\right) \right\Vert <2\epsilon .
\end{equation*}%
Therefore, taking supremum over all possible $D_{3}$, we get 
\begin{equation*}
\sum_{k}\left\Vert A\left( \gamma ^{D_{2}}\right) \right\Vert _{1-var,\left[
t_{k},t_{k+1}\right] }\leq 2\epsilon \text{.}
\end{equation*}%
Thus%
\begin{equation*}
\left\Vert A\left( \gamma ^{D_{1}}\right) -A\left( \gamma ^{D_{2}}\right)
\right\Vert _{1-var}\leq 2\epsilon +4\left\Vert \gamma \right\Vert _{2-var, 
\left[ 0,T\right] }\epsilon ^{\frac{1}{2}}\text{.}
\end{equation*}

For any finite partition $D$ and $D^{\prime }$, denote $D^{\prime \prime
}=D\cup D^{\prime }$, and use the above estimates for $D$, $D^{\prime \prime
}$ and $D^{\prime }$, $D^{\prime \prime }$. Proof finishes.
\end{proof}

Therefore, if a vanishing $2$-variation path $\gamma $ can be enhanced into
a geometric weak geometric $2$-rough path, then $A\left( \gamma ^{D}\right) $
converge in $1$-variation as $\left\vert D\right\vert \rightarrow 0$, so
converge pointwisely to the Riemann-Stieltjes integral $2^{-1}\int_{s}^{t}%
\left[ \gamma \left( u\right) -\gamma \left( s\right) ,d\gamma \left(
u\right) \right] $.

\bigskip

\noindent \textbf{Theorem \ref{Theorem condition to be enhancible into a
2-rough path}} \ \ \ \ Suppose $\gamma \in C^{2-var}\left( \left[ 0,T\right]
,\mathcal{V}\right) $. Then $\gamma $ can be enhanced into a weak geometric $%
2$-rough path if and only if 
\begin{equation*}
\sup_{D}\left\Vert A\left( \gamma ^{D}\right) \right\Vert _{1-var,\left[ 0,T%
\right] }<\infty \text{ and }\left\{ A\left( \gamma ^{D}\right) \right\} _{D}%
\text{ are equicontinuous.}
\end{equation*}

\medskip

\begin{proof}
\label{Proof of Theorem condition to be enhancible into a 2-rough path}$%
\Leftarrow $ Suppose $\left\{ D_{n}\right\} _{n}$ is a sequence of finite
partitions of $\left[ 0,T\right] $ satisfying $\lim_{n\rightarrow \infty
}\left\vert D_{n}\right\vert =0$. Since $\left\{ A\left( \gamma
^{D_{n}}\right) \right\} _{n}$ are uniformly bounded and equicontinuous, so
based on Arzel\`{a}-Ascoli theorem, there exists a subsequence $\left\{
A\left( \gamma ^{D_{n_{k}}}\right) \right\} _{k}$\ which converge in uniform
norm. Denote the limit as $\alpha $.

$\gamma $ is continuous, so $\gamma ^{D_{n_{k}}}$ converge to $\gamma $ in
uniform norm as $k$ tends to infinity. Since multiplicativity is preserved
under pointwise convergence, $\left( \gamma ,\alpha \right) $ is
multiplicative. On the other hand, use the lower semi-continuity of $p$%
-variation, 
\begin{equation*}
\left\Vert \alpha \right\Vert _{1-var,\left[ 0,T\right] }\leq \underline{%
\lim }_{n\rightarrow \infty }\left\Vert A\left( \gamma ^{D_{n}}\right)
\right\Vert _{1-var,\left[ 0,T\right] }\leq \sup_{D}\left\Vert A\left(
\gamma ^{D}\right) \right\Vert _{1-var,\left[ 0,T\right] }<\infty .
\end{equation*}%
Thus, $\left( \gamma ,\alpha \right) $ is a weak geometric $2$-rough path.

$\Rightarrow $ Suppose $\left( \gamma ,\alpha \right) $ is a weak geometric $%
2$-rough path. Fix finite partition $D=\left\{ t_{k}\right\} $ and $\left(
s,t\right) \in \bigtriangleup _{\left[ 0,T\right] }$. Suppose $%
t_{k_{1}-1}<s\leq t_{k_{1}}\leq t_{k_{2}}\leq t<t_{k_{2}+1}$, then based on
Lemma \ref{Lemma relations path and area}%
\begin{eqnarray}
&&\left\Vert A\left( \gamma ^{D}\right) \left( s,t\right) \right\Vert
\label{inner inequality equicontinuity of A(gamma^D)} \\
&\leq &\left\Vert \alpha \left( s,t\right) \right\Vert +\left\Vert \alpha
\left( s,t_{k_{1}}\right) \right\Vert +\sum_{k=k_{1}}^{k_{2}-1}\left\Vert
\alpha \left( t_{k},t_{k+1}\right) \right\Vert +\left\Vert \alpha \left(
t_{k_{2}},t\right) \right\Vert  \notag \\
&\leq &2\left\Vert \alpha \right\Vert _{1-var,\left[ s,t\right] }.  \notag
\end{eqnarray}%
Thus $\left\{ A\left( \gamma ^{D}\right) \right\} _{D}$ are equicontinuous,
and (based on $\left( \ref{inner inequality equicontinuity of A(gamma^D)}%
\right) $),%
\begin{equation*}
\sup_{D}\left\Vert A\left( \gamma ^{D}\right) \right\Vert _{1-var,\left[ 0,T%
\right] }\leq 2\left\Vert \alpha \right\Vert _{1-var,\left[ 0,T\right]
}<\infty .
\end{equation*}
\end{proof}

\begin{lemma}
\label{Lemma estimation of area bisecting interval copy(1)}Suppose $\gamma :%
\left[ 0,T\right] \rightarrow \mathcal{V}$, is a continuous finitely
piecewise linear path. Then for any $p>1$, $q>1$, $p^{-1}+q^{-1}=1$, there
exists finite partition $D=\left\{ t_{k}\right\} $ s.t. $\left\vert
D\right\vert \leq 2^{-1}T$ and%
\begin{equation*}
\left\Vert A\left( \gamma \right) \right\Vert _{1-var,\left[ 0,T\right]
}\leq \sum_{k,t_{k}\in D}\left\Vert A\left( \gamma \right) \right\Vert
_{1-var,\left[ t_{k},t_{k+1}\right] }+2\left\Vert \gamma _{1}\right\Vert
_{2-var,\left[ 0,T\right] }^{2}.
\end{equation*}
\end{lemma}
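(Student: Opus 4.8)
The plan is to mirror the proof of Lemma~\ref{Lemma estimation of area bisecting interval}, replacing the iterated integral $I(\gamma_{1},\gamma_{2})$ by the single-path area $A(\gamma)$ and the crossing term of the iterated-integral Chen identity by the area multiplicativity $\left(\ref{property of multiplicativity}\right)$. I would use two ingredients. First, $A(\gamma)$ is multiplicative, so for any $a\le c\le b$ one has $A(\gamma)(a,b)=A(\gamma)(a,c)+A(\gamma)(c,b)+\frac12[\gamma(c)-\gamma(a),\gamma(b)-\gamma(c)]$. Second, the structural simplification special to the single-path case: whenever $\gamma$ is linear on a subinterval $[a,b]$ one has $A(\gamma)\equiv 0$ there, since $[v,v]=0$ makes the signed area of a straight segment vanish. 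This last fact is the analogue of, but strictly stronger than, $\left(\ref{estimation of area on a line segment}\right)$: linear pieces contribute nothing at all rather than a product of norms.

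The key step is a one-split estimate. Fixing a splitting point $c$, I would take an arbitrary finite partition $E$ of $[0,T]$, locate the $E$-interval $[e_{j_{0}},e_{j_{0}+1}]$ containing $c$, and refine $E$ by $c$. Applying multiplicativity to $A(\gamma)(e_{j_{0}},e_{j_{0}+1})$, then the triangle inequality, and grouping the refined sum according to whether an interval lies left or right of $c$, gives
\begin{equation*}
\sum_{E}\left\Vert A(\gamma)(e_{j},e_{j+1})\right\Vert \le \left\Vert A(\gamma)\right\Vert_{1-var,[0,c]}+\left\Vert A(\gamma)\right\Vert_{1-var,[c,T]}+\tfrac12\left\Vert [\gamma(c)-\gamma(e_{j_{0}}),\gamma(e_{j_{0}+1})-\gamma(c)]\right\Vert .
\end{equation*}
Using $\left\Vert [u,v]\right\Vert\le 2\left\Vert u\right\Vert\left\Vert v\right\Vert$ together with the monotonicity and superadditivity of $\tau\mapsto\left\Vert\gamma\right\Vert_{2-var,\tau}^{2}$, the last term is at most $\left\Vert\gamma\right\Vert_{2-var,[0,c]}\left\Vert\gamma\right\Vert_{2-var,[c,T]}\le\left\Vert\gamma\right\Vert_{2-var,[0,T]}^{2}$, uniformly in $E$. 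Taking the supremum over $E$ yields
\begin{equation*}
\left\Vert A(\gamma)\right\Vert_{1-var,[0,T]}\le \left\Vert A(\gamma)\right\Vert_{1-var,[0,c]}+\left\Vert A(\gamma)\right\Vert_{1-var,[c,T]}+\left\Vert\gamma\right\Vert_{2-var,[0,T]}^{2}.
\end{equation*}

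It then remains to choose the split points so that the resulting partition $D$ has mesh $\le 2^{-1}T$ while at most two crossing terms are incurred. Following Lemma~\ref{Lemma estimation of area bisecting interval}, I would let $c=t_{j_{1}}$ be the largest corner $\le 2^{-1}T$ and split there; the linear piece straddling $2^{-1}T$ then contributes $0$ to $\sum_{k}\left\Vert A(\gamma)\right\Vert_{1-var,[t_{k},t_{k+1}]}$ by the vanishing-on-linear-pieces fact, and a second split at the right endpoint of that piece (with the degenerate cases $t_{j_{1}}=0$ or $t_{j_{1}+1}=T$ handling $[0,t_{1}]$ or $[t_{j_{1}},T]$ as a single linear segment) leaves every remaining $D$-interval of length $\le 2^{-1}T$. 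Since at most two crossing terms appear, each bounded by $\left\Vert\gamma\right\Vert_{2-var,[0,T]}^{2}$, the claimed inequality with constant $2$ follows. A cleaner variant in fact avoids the corner bookkeeping entirely: splitting once at the midpoint $2^{-1}T$ already gives $D=\{0,2^{-1}T,T\}$ of mesh $2^{-1}T$ and a single crossing term $\le\left\Vert\gamma\right\Vert_{2-var,[0,T]}^{2}\le 2\left\Vert\gamma\right\Vert_{2-var,[0,T]}^{2}$.

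The only genuine obstacle is the bookkeeping in this final step: each split pays a crossing term of size $\left\Vert\gamma\right\Vert_{2-var,[0,T]}^{2}$, so forcing the mesh below $2^{-1}T$ by inserting many points would accumulate too many such terms. The point is to realise the mesh constraint with as few interior split points as possible, which is precisely why the midpoint (or the single largest corner below $2^{-1}T$) is chosen. The single-path setting makes this markedly easier than in Lemma~\ref{Lemma estimation of area bisecting interval}, because the intervening linear pieces carry zero area and hence never need to be retained in the sum.
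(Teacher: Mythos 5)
Your proposal is correct and its main route is exactly the paper's: the paper proves this lemma by declaring it ``almost the same as Lemma~\ref{Lemma estimation of area bisecting interval} when $p=q=2$, using $\left\Vert \left[ u,v\right] \right\Vert \leq 2\left\Vert u\right\Vert \left\Vert v\right\Vert$,'' which is precisely the adaptation you carry out, and your two ingredients (multiplicativity $\left( \ref{property of multiplicativity}\right)$ and the vanishing of $A(\gamma)$ on linear pieces) are the right ones. One remark in your favour: your ``cleaner variant'' (a single split at $c=2^{-1}T$ via the one-split estimate) is not merely cosmetic --- in the corner-based route the middle linear piece $\left[ t_{j_{1}},t_{j_{1}+1}\right]$ can itself have length exceeding $2^{-1}T$, so the partition $\left\{ 0,t_{j_{1}},t_{j_{1}+1},T\right\}$ need not satisfy the stated mesh bound (this can be patched by subdividing the linear piece, at zero cost since $A(\gamma)$ and all crossing brackets of parallel increments vanish there), whereas the midpoint split delivers $D=\left\{ 0,2^{-1}T,T\right\}$ with mesh exactly $2^{-1}T$ and constant $1\leq 2$ outright.
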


\begin{proof}
Almost the same as that of Lemma \ref{Lemma estimation of area bisecting
interval} when $p=q=2$, by using $\left\Vert \left[ u,v\right] \right\Vert
\leq 2\left\Vert u\right\Vert \left\Vert v\right\Vert $.
\end{proof}

\medskip \medskip

\noindent \textbf{Theorem \ref{Theorem larger space of enhancible paths} \ }%
\textit{Let }$\gamma :\left[ 0,1\right] \rightarrow \mathcal{V}$\textit{\ be
a continuous paths. Then if there exists an non-decreasing function }$m:%
\left[ 0,1\right] \rightarrow \overline{%
%TCIMACRO{\U{211d} }%
%BeginExpansion
\mathbb{R}
%EndExpansion
^{+}}$\textit{\ satisfying}%
\begin{equation*}
\lim_{t\rightarrow 0}m\left( t\right) =0\text{, }m\left( 1\right) \leq 1%
\text{, and }\int_{0}^{1}\frac{m^{2}\left( t\right) }{t}dt<\infty ,
\end{equation*}%
\textit{such that }%
\begin{equation*}
\sup_{0\leq s<t\leq 1}\frac{\left\Vert \gamma \left( t\right) -\gamma \left(
s\right) \right\Vert }{\left\vert t-s\right\vert ^{\frac{1}{2}}m\left(
t-s\right) }<\infty .
\end{equation*}%
\textit{Then }$\gamma \in \mathcal{G}_{2}\left( \mathcal{V}\right) $.

\bigskip

\begin{proof}
\label{Proof of Theorem larger space of enhancible paths}Denote%
\begin{equation*}
C:=\sup_{0\leq s<t\leq 1}\frac{\left\Vert \gamma \left( t\right) -\gamma
\left( s\right) \right\Vert }{\left\vert t-s\right\vert ^{\frac{1}{2}%
}m\left( t-s\right) }<\infty \text{.}
\end{equation*}%
Then $\left\Vert \gamma \right\Vert _{2-var,\left[ 0,T\right] }\leq C$, $%
\omega _{2}\left( \gamma ,\delta \right) \leq Cm\left( \delta \right) $.
Using Lemma \ref{Lemma difference of 1-var of area copy(1)},%
\begin{eqnarray*}
\left\Vert A\left( \gamma ^{D_{1}}\right) -A\left( \gamma ^{D_{2}}\right)
\right\Vert _{1-var} &\leq &\sum_{k}\left\Vert A\left( \gamma
^{D_{2}}\right) \right\Vert _{1-var,\left[ t_{k},t_{k+1}\right]
}+4\left\Vert \gamma \right\Vert _{2-var,\left[ 0,T\right] }\omega
_{2}\left( \gamma ,\delta \right) . \\
&\leq &\sum_{k}\left\Vert A\left( \gamma ^{D_{2}}\right) \right\Vert _{1-var,
\left[ t_{k},t_{k+1}\right] }+4C^{2}m\left( \delta \right) .
\end{eqnarray*}%
While, apply Lemma \ref{Lemma estimation of area bisecting interval copy(1)}
to bisect intervals, and use similar reasoning as that lead to $\left( \ref%
{generalized Young 4}\right) $ in proof of Theorem \ref{Theorem generalized
Young integral} (starting from page \pageref{Proof of theorem generalized
Young integral}), we get%
\begin{gather*}
\sum_{k}\left\Vert A^{D_{2}}\right\Vert _{1-var,\left[ t_{k},t_{k+1}\right]
}\leq 2C^{2}\sum_{n=0}^{\infty }m^{2}\left( \frac{\delta }{2^{n}}\right)
\leq 2C^{2}\left( m^{2}\left( \delta \right) +2\int_{0}^{\delta }\frac{%
m^{2}\left( t\right) }{t}dt\right) . \\
\text{Thus, }\left\Vert A\left( \gamma ^{D_{1}}\right) -A\left( \gamma
^{D_{2}}\right) \right\Vert _{1-var}\leq 2C^{2}\left( 2m\left( \delta
\right) +m^{2}\left( \delta \right) +2\int_{0}^{\delta }\frac{m^{2}\left(
t\right) }{t}dt\right) .
\end{gather*}%
Since $\lim_{\delta \rightarrow 0}m\left( \delta \right) =0$ and $%
\int_{0}^{1}\frac{m^{2}\left( t\right) }{t}dt<\infty $, so $A\left( \gamma
^{D}\right) $ converge in $1$-variation as $\left\vert D\right\vert
\rightarrow 0$. Based on Theorem \ref{Theorem condition to be enhancible}, $%
\gamma $ is in $\mathcal{G}_{2}\left( \mathcal{V}\right) $.
\end{proof}

\bigskip

\textbf{Acknowledgement} \ \ Thanks are due to the extremely patient
guidance from and enlightening discussions with my supervisor Prof. Terry J.
Lyons.

\medskip

\end{document}